\theoremstyle{plain}
\newtheorem{thm}{Theorem}[section]
\newcounter{thI}
\newtheorem*{theorem-non}{Theorem}
\newtheorem{lem}{Lemma}[section]
\newtheorem*{theorem-nonfr}{Théorème}
\newtheorem{prop}{Proposition}[section]
\newcounter{cntasmp}
\theoremstyle{definition}
\def\beq{\begin{equation}}
\def\eeq{\end{equation}}
\def\e{\varepsilon}
\def\di{\displaystyle}
\def\RR{\mathbb{R}}
\def\CC{\mathbb{C}}
\theoremstyle{remark}
\newtheorem{rmq}{Remark}[section]
\def\ptn(#1)(#2)(#3){\fill[color=#3](#1)circle(#2)}
\begin{document}

  \renewcommand{\proofname}{Proof}
  \renewcommand\thethI{\arabic{thI}} 
\title{\bf  Front propagation directed by a line of fast diffusion: large diffusion and large time asymptotics}
\author{Laurent  {\sc Dietrich}$^{\hbox{a }}$, Jean-Michel {\sc Roquejoffre}$^{\hbox{b }}$\\
\footnotesize{$^{\hbox{a }}$ Department of Mathematical Sciences,
Carnegie Mellon University\\
Pittsburgh, PA 15213, U.S.A. }\\
\footnotesize{$^{\hbox{b }}$ Institut de Math\'ematiques de Toulouse,
Universit\'e Paul Sabatier}\\
\footnotesize{118 route de Narbonne, F-31062 Toulouse Cedex 4, France}}
\maketitle

\begin{abstract}
\noindent  The system under study is a reaction-diffusion equation in a horizontal strip, coupled to a diffusion equation on its upper boundary via an exchange condition of the Robin type. This class of models was introduced by H. Berestycki, L. Rossi and the second author in order to model biological invasions directed by lines of fast diffusion. They proved, in particular, that the speed of invasion was enhanced by a fast diffusion on the line, the spreading velocity being asymptotically proportional to the square root of the fast diffusion coefficient. These results could be reduced, in the logistic case, to explicit algebraic computations. The goal of this paper is to prove that the same phenomenon holds, with a different type of nonlinearity, which precludes explicit computations. We discover a new transition phenomenon, that we explain in detail.  
\end{abstract}

  \maketitle


\section{Introduction, statement of the problem}
\subsection{Model and question}
Let $\Omega_L$ be the strip $\{(x,y)\in\mathbb R \times (-L,0)\}$. The goal of this paper is to study the large time asymptotics of the following system:
\begin{equation}
\label{e1.1}
\begin{array}{rll}
u_t-Du_{xx}+\mu u-v(t,x,0)=&0\  \  \  (t>0,x\in\mathbb R)\\
v_t-d\Delta v=&f(v)\  \  \  (t>0,(x,y)\in\Omega_L)\\
\ \\
dv_y(t,x,0)+v(t,x,0)=&\mu u(t,x)\  \  \  (t>0,x\in\mathbb R)\\
v_y(t,x,-L)=&0\  \  \  (t>0,x\in\mathbb R)\\
\end{array}
\end{equation}
The unknowns are the functions $(u(t,x),v(t,x,y))$, respectively defined on $\mathbb R_+\times\mathbb R$ and $\mathbb R_+\times\Omega_L$. The positive numbers $\mu$, $d$, $D$ are given. The function $f(v)$ is smooth, and there is $\theta>0$ such that $f\equiv0$ on $[0,\theta]$ and $f(1)=0$. Moreover $f>0$ on $(\theta,1)$ and $f'(1)<0$. Such a nonlinear term will sometimes be referred to as {\it ignition type nonlinearity,} in reference to the mathematical literature on flame propagation models. Of particular interest to us will be the large time asymptotics of \eqref{e1.1}, combined with the limit $D\to+\infty$. 
\subsection{Motivation}
System \eqref{e1.1} was proposed for the first time by Berestycki, Rossi and the second author in \cite{BRR}, as a model for biological invasions in oriented habitats. It was indeed observed in several instances that transportation networks tend to enhance the speed of invasion. Let us mention two biological instances:  the pine processionary moves northwards faster than anticipated, and it is believed that the road network has a responsibility in the phenomenon, see for example \cite{collectif}. The   yellow-legged hornet has invaded the whole   South West of France, as is reported in the maps provided in \cite{patrimoine}: it first followed the main rivers, and from then colonised the inland areas. 

In \cite{BRR}, $\Omega_L$ is replaced by the whole upper half-plane, and the nonlinearity $f$ is a Fisher-KPP type nonlinearity ($f(0)=f(1)=0$, $f>0$ concave between 0 and 1). The line $\{y=0\}$ is named 'the road', and the upper half-plane is named 'the field'. This terminology will be freely used here. We showed the dramatic effect of the road on the overall propagation: there is $c_*(D)>0$ such that, for all $c<c_*(D)$ we have
$$
\lim_{t\to+\infty}\inf_{\vert x\vert\leq ct}u(t,x)=1/\mu,\  \  \lim_{t\to+\infty}\inf_{\vert x\vert\leq ct}v(t,x,y)=1,\ \hbox{locally uniformly in $y\in\mathbb{R}$},
$$
and, for all $c>c_*(D)$ we have
$$
\lim_{t\to+\infty}\sup_{\vert x\vert\geq ct}u(t,x)=0,\  \  \lim_{t\to+\infty}\sup_{\vert x\vert\geq ct}v(t,x,y)=0,\ \hbox{uniformly in $y\in\mathbb{R}$}.
$$
Moreover, there is $c_\infty>0$ such that  $c_*(D)\sim c_\infty\sqrt D$, as $D\to+\infty$. This is in sharp contrast with the classical propagation results for reaction-diffusion equations, such as Aronson-Weinberger \cite{AW}. One could question whether it is an effect of  the Fisher-KPP nonlinearity, or if it holds for more general terms $f$. In \cite{D2}, the first author gives a first hint of the robustness of this phenomenon, by constructing travelling waves
$(\phi(x+ct), \psi(x+ct,y))$
to \eqref{e1.1} whose speed $c$ satisfies indeed $c(D)\sim c_\infty\sqrt D$, where $c_\infty>0$ is characterised in terms of a limiting problem obtained by rescaling $x$ by $\sqrt D$ and sending $D$ to infinity. In order to confirm the phenomenon for \eqref{e1.1} with the ignition type nonlinearity, one should understand  whether, and how,  those travelling waves attract  the solutions of \eqref{e1.1}.  Instead of presenting the results now, we will show some numerical simulations, which reveal a phenomenon that we had not expected.

\subsection{Some numerical simulations}

These simulations were produced using FreeFem++. We used P2 finite elements on a mesh of $400\times 50$ points. The time scheme used is a two-step (to handle the coupling) explicit Euler, which seems quite sufficient in terms of accuracy and speed for our context.  Neumann boundary conditions are imposed on the sides of a domain of size $A\times L$ with $A \gg L$.  Finally, we represented $u$ as a function over the whole domain so that it is visible. The following parameters were used~:

\begin{center}
   \begin{tabular}{ | c | c | }
     \hline
     $\mu u_0,v_0$ & $\mathbf 1_{(-3,3)}(x)$ \\ \hline
     $d$ & 0.1 \\ \hline
     $D$ & 100 \\ \hline
     $\mu$ & 1.4 \\ \hline
	 $\theta$ & 0.3 \\ \hline
    $f(v)$ & $10\times \mathbf 1_{v>\theta}(v-\theta)^2(1-v)$ \\ \hline
      $A$ & 500 \\ \hline
      $L$ & 50 \\ \hline
      $\Delta t$ & 0.1 \\ \hline
   \end{tabular}
 \end{center}
 \begin{center}
\begin{figure}[h!]
  \includegraphics[width=.5\textwidth]{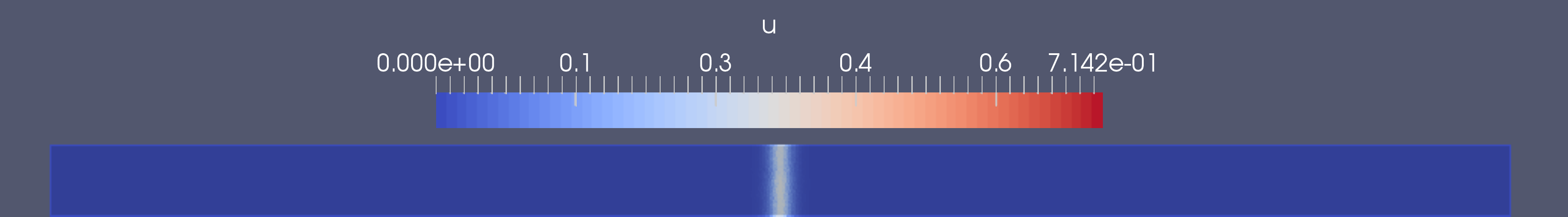}  \includegraphics[width=.5\textwidth]{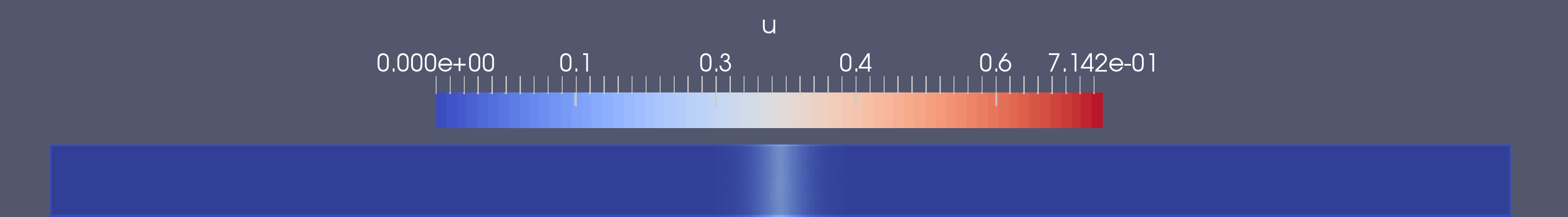}\\
   \includegraphics[width=.5\textwidth]{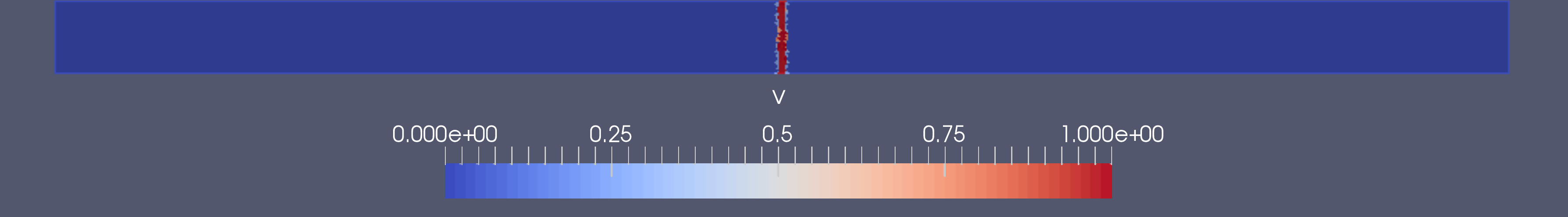}  \includegraphics[width=.5\textwidth]{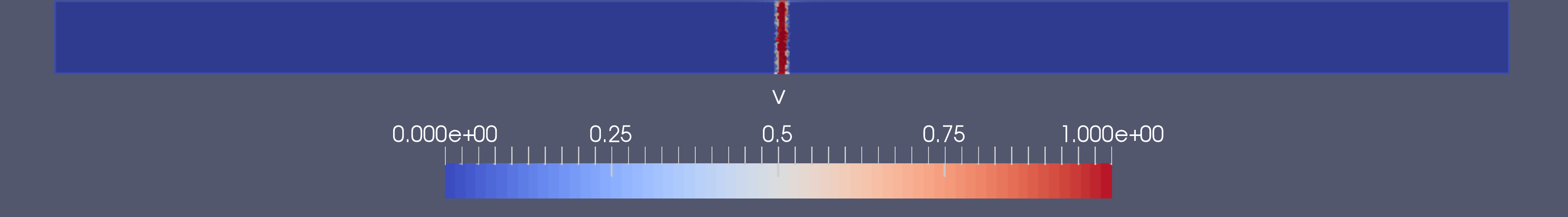} 
\caption{$t=0$ and $t=10\Delta t$}\label{simust0}
\end{figure}


\begin{figure}[h!]
  \includegraphics[width=.5\textwidth]{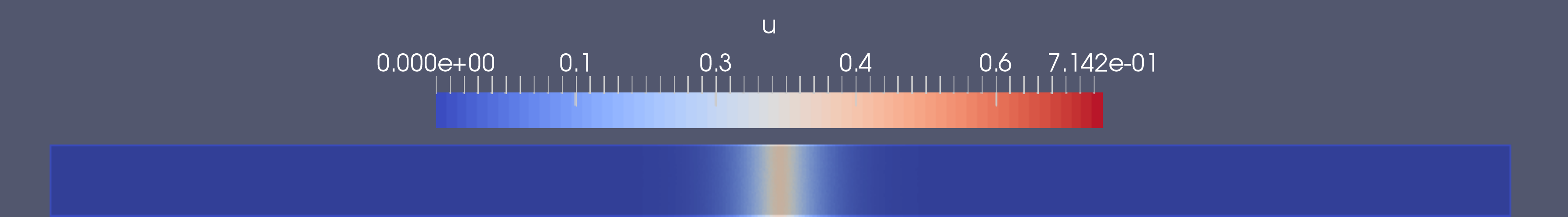}   \includegraphics[width=.5\textwidth]{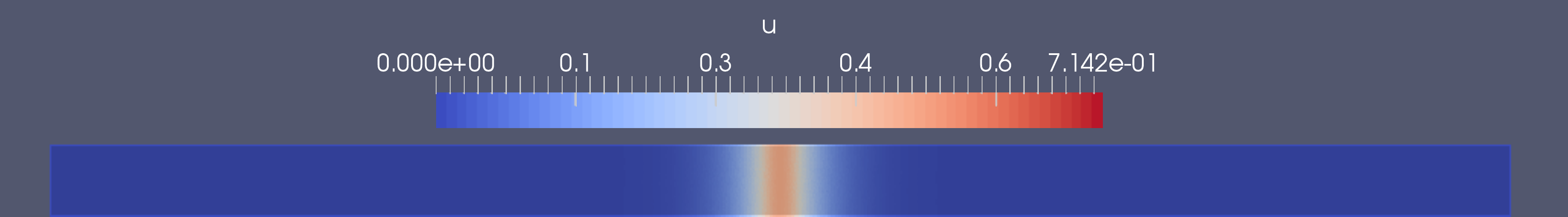}\\
   \includegraphics[width=.5\textwidth]{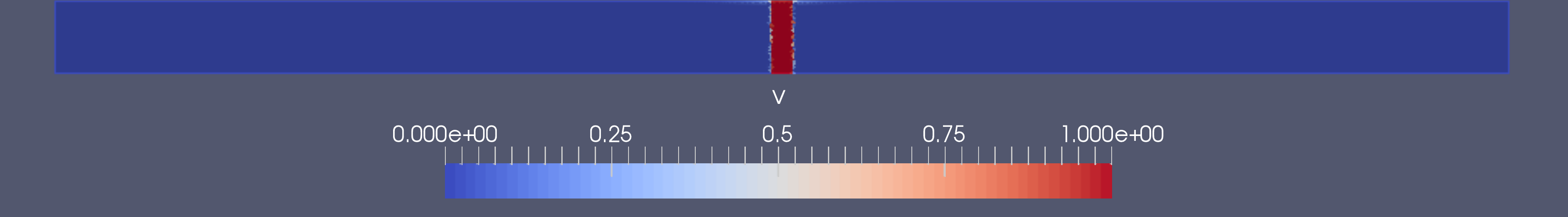}  \includegraphics[width=.5\textwidth]{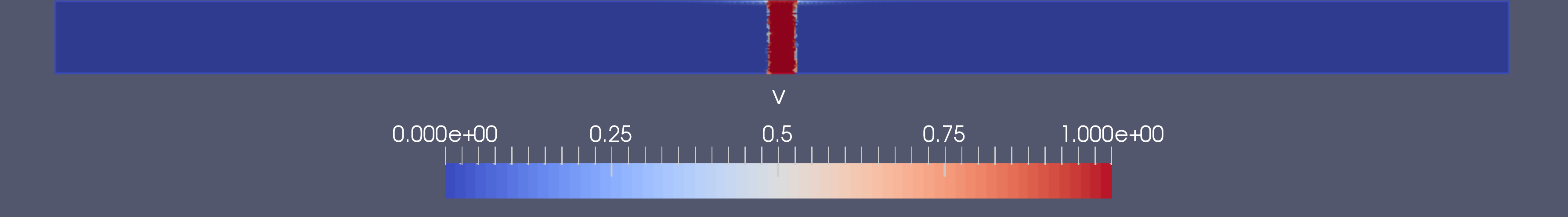} 
\caption{$t=75 \Delta t$ and $t=100\Delta t$ }\label{simust75}
\end{figure}


\begin{figure}[h!]
  \includegraphics[width=.5\textwidth]{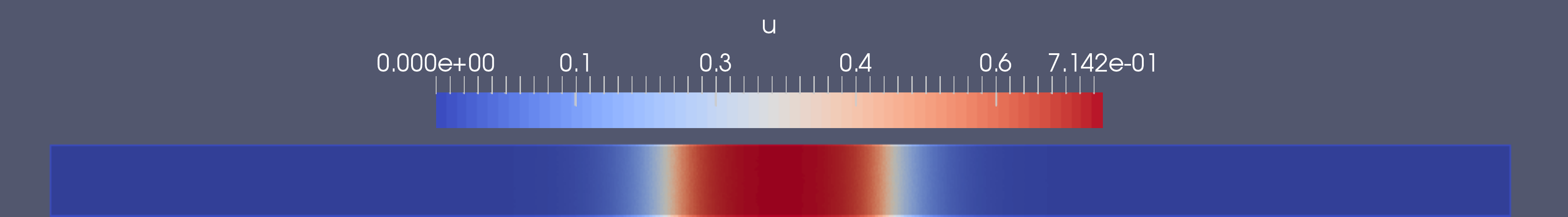}   \includegraphics[width=.5\textwidth]{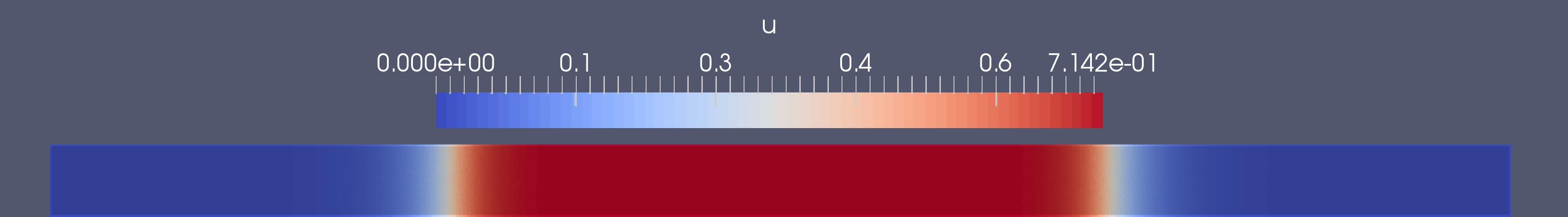} \\
   \includegraphics[width=.5\textwidth]{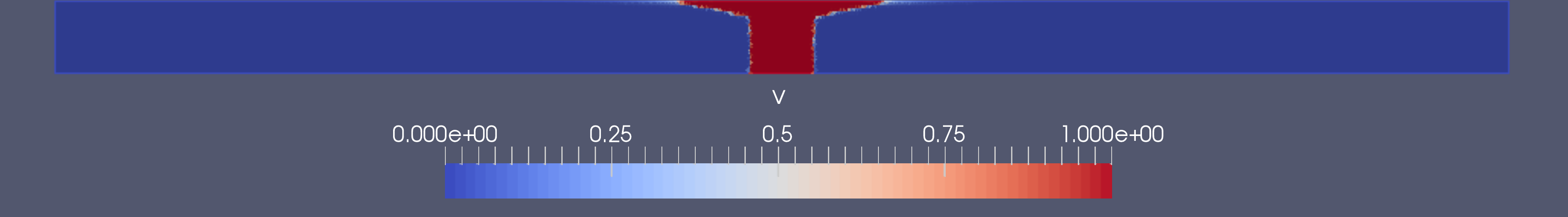}    \includegraphics[width=.5\textwidth]{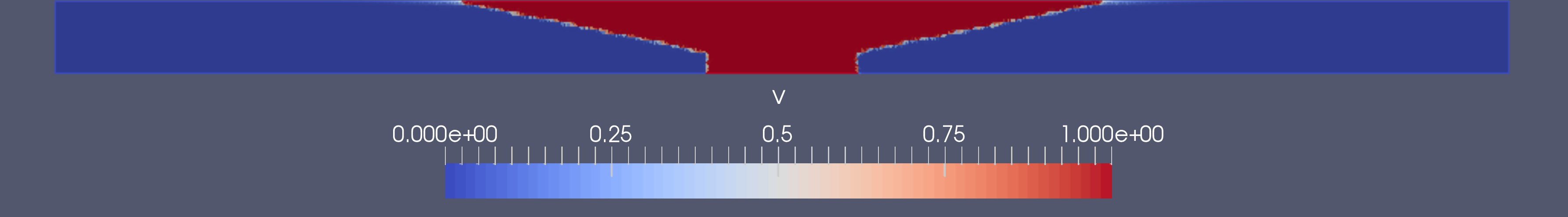} 
\caption{$t=300 \Delta t$ and $t=750\Delta t$}\label{simust300}
\end{figure}

\begin{figure}[h!]
  \includegraphics[width=.5\textwidth]{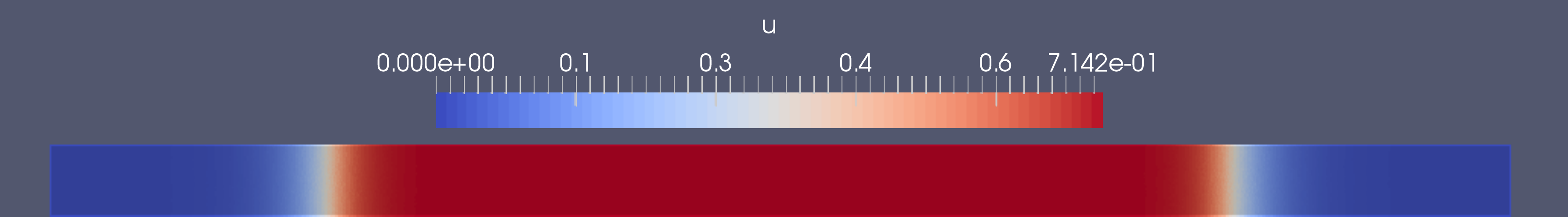} \includegraphics[width=.5\textwidth]{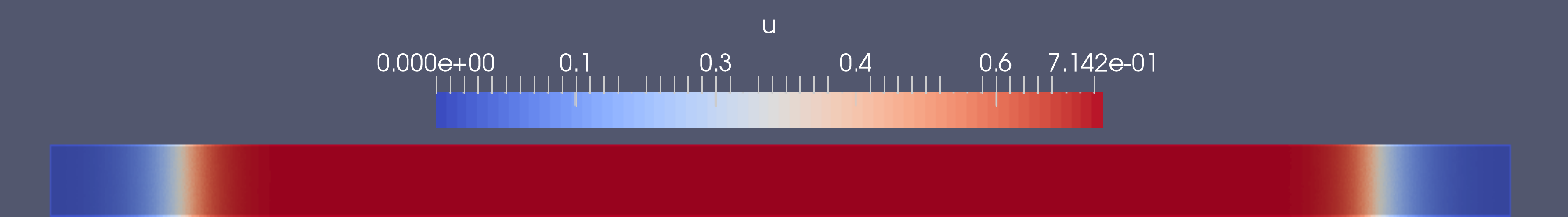}\\
   \includegraphics[width=.5\textwidth]{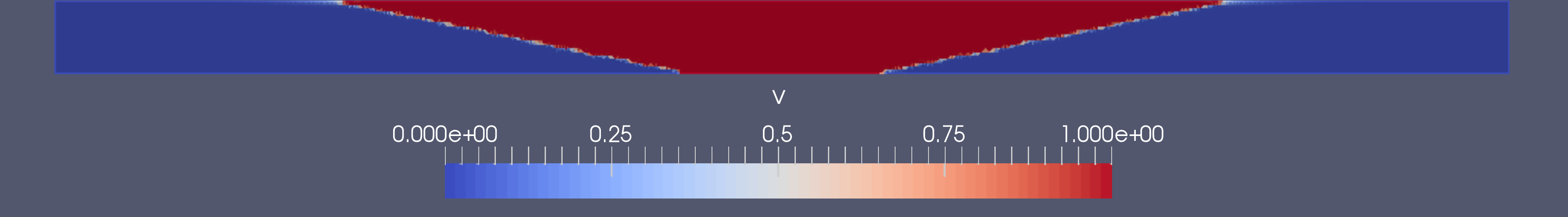}  \includegraphics[width=.5\textwidth]{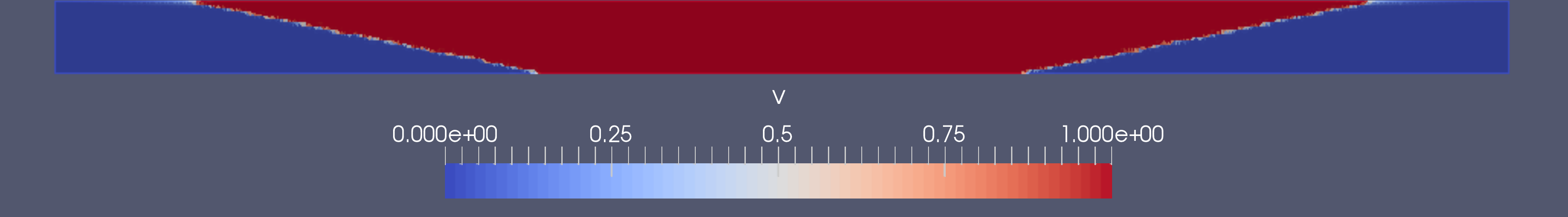} 
\caption{$t=1000 \Delta t$ and $t=1300\Delta t$}\label{simust1000}
\end{figure}
\end{center}

%
\vskip-.6cm
The  scenario that we would expect is thus the following: due to the large diffusivity $D$, $u$ is quickly spread on all $\mathbb R$ and decays rapidly. Meanwhile, $v$ grows slowly and transmits mass to $u$. At some point, $u$ has recovered enough mass and starts to lead the propagation.  The acceleration of the propagation is then transmitted downwards from the road to the bottom of the field, reaching the regime dictated by the travelling wave. The remainder of this paper is devoted to proving that this is indeed what happens.
\section{Main results, discussion}
Let us reformulate System \eqref{e1.1} in the following way, we hope that it will help the reader visualise the problem.
\begin{equation}
\label{normal}
  \begin{tikzpicture}
  \draw (-6,0) -- (6,0) node[pos=0.5,below] {\small{$d\partial_y v = \mu u - v$}} node[pos=0.5,above] {$\partial_t u - D\partial_{xx}^2 u  = v - \mu u$};

  \node at (0,-1.25) {$ \partial_t v - d\Delta v  = f(v)$};

  \draw (-6,-2.5) -- (6,-2.5) node[pos=0.5,above] {\small{$\partial_y v= 0$}};

  \end{tikzpicture}
\end{equation}
We also want to study the behaviour for large $D$, so the renormalization ($x \leftarrow x \sqrt D$) will often be used:
\begin{equation}
\label{readi}
  \begin{tikzpicture}
  \draw (-6,0) -- (6,0) node[pos=0.5,below] {\small{$d\partial_y v = \mu u - v$}} node[pos=0.5,above] {$\partial_t u - \partial_{xx}^2u  = v - \mu u$};

  \node at (0,-1.25) {$\partial_t v - \frac{d}{D} \partial_{xx}^2 v - d\partial_{yy}^2v  = f(v)$};

  \draw (-6,-2.5) -- (6,-2.5) node[pos=0.5,above] {\small{$\partial_y v= 0$}};


  \end{tikzpicture}
\end{equation}
Some results will be stated for equation \eqref{normal} and some for \eqref{readi} and the proofs will juggle between the two. 
We briefly mention existence and uniqueness of a solution and refer to \cite{BRR} for the proof (where the strip is replaced by a half-plane but the argument still holds). We wish to emphasise that uniqueness as well as many properties of this system are a consequence of the monotone structure of \eqref{normal} inherited from the maximum principle investigated in \cite{BRR, D1, D2}. The purpose of this paper is to investigate the large-time and large-diffusion asymptotics of this solution.

\begin{thm}
\label{existence}[Stated for equation \eqref{normal}]
Let $(u_0, v_0) \in \mathcal C(\mathbb R)\times \mathcal C(\Omega_L)$, $0 \leq \mu u_0, v_0 \leq 1$. There exists a global solution in the classical sense to \eqref{normal} with initial data $(u_0,v_0)$. This solution is unique in the class of bounded classical solutions and satisfies $0 \leq \mu u(t,x), v(t,x,y) \leq~1$ for all $t\geq 0, x\in \mathbb R, y \in \Omega_L$.
\end{thm}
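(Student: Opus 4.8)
The plan is to establish existence by a combination of a fixed-point/iteration scheme adapted to the coupled structure and a priori bounds coming from the maximum principle, then to extract uniqueness and the invariant region $\{0\le\mu u,v\le 1\}$ from the comparison principle for the monotone system \eqref{normal}. First I would treat the system as a coupled parabolic problem: the equation for $u$ is a $1$D heat equation on $\mathbb R$ with a linear zeroth-order term and a source $v(t,x,0)$, while the equation for $v$ is a semilinear parabolic equation on the strip $\Omega_L$ with a Robin-type lateral coupling through $\mu u$ on $\{y=0\}$ and a homogeneous Neumann condition on $\{y=-L\}$. I would first build a local-in-time solution by a contraction argument: fix $T$ small, work in the Banach space $X_T=\big(C_b([0,T]\times\mathbb R)\big)\times\big(C_b([0,T]\times\overline{\Omega_L})\big)$, and define a map $\Phi$ that sends $(\tilde u,\tilde v)$ to $(u,v)$, where $u$ solves its linear heat equation with right-hand side $\tilde v(t,x,0)-\mu\tilde u$ (or better, keep the linear part implicit and only feed in $\tilde v(t,x,0)$, treating $-\mu u$ as part of the generator so that one can use the Green's function of $\partial_t-D\partial_{xx}+\mu$), and $v$ solves the linear parabolic problem on the strip with source $f(\tilde v)$ and Robin data $\mu\tilde u$. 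Using standard heat-kernel and parabolic Schauder/maximum-principle estimates, together with the global Lipschitz character of $f$ (which we may assume after modifying $f$ outside $[0,1]$ — this modification is harmless once the solution is shown to stay in $[0,1]$), $\Phi$ is a contraction on $X_T$ for $T$ small depending only on $\|f\|_{C^1}$, $\mu$, $d$, $D$, not on the data; this gives a unique bounded mild solution on $[0,T]$, and parabolic regularity bootstraps it to a classical solution.

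Next I would upgrade local existence to global existence by proving the a priori bound $0\le\mu u(t,x),v(t,x,y)\le 1$ on the interval of existence, which simultaneously delivers the last assertion of the theorem. The pair $(\overline u,\overline v)=(1/\mu,1)$ is a stationary supersolution of \eqref{normal}: indeed $\partial_t\overline u-D\partial_{xx}\overline u=0=\overline v-\mu\overline u$, $\partial_t\overline v-d\Delta\overline v=0\ge f(1)=0$, and the boundary relations $d\partial_y\overline v=0=\mu\overline u-\overline v$ and $\partial_y\overline v=0$ hold with equality; similarly $(0,0)$ is a stationary subsolution using $f(0)=0$. Since $0\le\mu u_0,v_0\le1$, the comparison principle for \eqref{normal} — which is exactly the monotonicity/maximum principle established in \cite{BRR,D1,D2}, and which applies here verbatim since replacing the half-plane by the strip only adds a homogeneous Neumann boundary that is compatible with comparison — pins the solution between these ordered sub/supersolutions for all times in the existence interval. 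Because the $C_b$-norm of the solution is then bounded by a universal constant, the local existence time $T$ can be chosen uniformly, and iterating the local construction yields a global classical solution.

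Finally, uniqueness in the class of bounded classical solutions follows again from the comparison principle: if $(u^1,v^1)$ and $(u^2,v^2)$ are two bounded classical solutions with the same initial data, then for any $\e>0$ one checks that $(u^1,v^1)$ lies below a suitable perturbation of $(u^2,v^2)$ and vice versa, or more directly one applies comparison in both directions to conclude $(u^1,v^1)\equiv(u^2,v^2)$; alternatively, since both solutions are a priori in $[0,1]$ (by the argument above, using that they are bounded and that comparison with $(0,0)$ and $(1/\mu,1)$ still applies once we know they are bounded — one needs here a Phragmén–Lindelöf-type version of comparison on the unbounded strip, which is standard for bounded solutions), the difference solves a linear coupled system with bounded coefficients and zero data, and a Gronwall estimate on $\sup_{[0,t]}\big(\|u^1-u^2\|_\infty+\|v^1-v^2\|_\infty\big)$ using the heat semigroup forces it to vanish. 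The main obstacle I anticipate is the bookkeeping of the coupling in the fixed-point step — in particular choosing the function spaces and the linear solution operators so that the Robin exchange condition $d\partial_y v+v=\mu u$ is handled cleanly (one wants enough regularity of the trace $v(t,\cdot,0)$ to feed it back into the $u$-equation), and verifying that the comparison principle of \cite{BRR} genuinely transfers to the strip with the extra Neumann face; both are routine but require care, and everything else (global Lipschitz reduction of $f$, Schauder bootstrap, Gronwall) is standard.
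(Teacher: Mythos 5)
Your proposal is correct and follows essentially the same line the paper takes: the paper does not spell out a proof but defers to \cite{BRR} (with the remark that passing from the half-plane to the strip is harmless), where existence is obtained by a local fixed-point/iteration argument for the coupled parabolic system, the invariant region $\{0\le\mu u,v\le1\}$ comes from comparison with the ordered stationary sub/supersolutions $(0,0)$ and $(1/\mu,1)$, and uniqueness comes from the monotone structure and the maximum/comparison principle. Your reconstruction of that argument, including the explicit check that $(1/\mu,1)$ and $(0,0)$ are equilibria, the global-Lipschitz modification of $f$ outside $[0,1]$, and the use of the comparison principle on the strip, matches the intended route.
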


\subsection{First results}
The following theorems are natural consequences of the stability of front-like initial data using an argument initiated by \cite{FML} and are not so unexpected. They will, nevertheless, be useful for later purposes. A specificity of the present computations is that they should be uniform in the large parameter $D$, this is why they are detailed.
\begin{thm}
\label{thmintropropafl} [Stated for equation \eqref{normal}]
Let $(u_0, v_0)$ be a front-like initial datum for equation \eqref{readi}, that is $(u_0,v_0) \in \mathcal P_{\alpha_0}$ defined in the next section. There exists an exponent $\omega > 0$ that depends on the initial data only through $\alpha_0$ and for all $\varepsilon > 0$ small enough there exist two shifts $\xi_1^\pm \in \mathbb R$ such that
\begin{alignat*}{2}
\phi(x+c\xi_1^- + ct) - C\varepsilon e^{-\omega t} &\leq \mu u(t,x) &&\leq \mu \phi(x+c\xi_1^+ + ct) + C\varepsilon e^{-\omega t} \\
\psi(x+c\xi_1^- +ct)- C\varepsilon e^{-\omega t} &\leq v(t,x,y) &&\leq \psi(x+c\xi_1^+ +ct ) + C\varepsilon e^{-\omega t}
\end{alignat*}
where $C$ is a constant that depends only on $f$, $d$ and $L$. Moreover, $\omega$ does not depend on $D > d$.
\end{thm}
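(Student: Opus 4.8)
The plan is to run the by-now-classical stability argument of Fife--McLeod \cite{FML}, adapted to the coupled road--field system and, crucially, tracked uniformly in $D>d$. The basic object is the travelling wave $(\phi,\psi)$ constructed in \cite{D2}, which is an increasing front connecting $(0,0)$ to $(1/\mu,1)$ and moving at speed $c=c(D)$; since we work with \eqref{readi}, all estimates on $(\phi,\psi)$ and on its exponential decay rate at $\pm\infty$ should be taken in the rescaled variable, where they are known to be uniform in $D$ (this is the content of the limiting problem in \cite{D2}). The strategy is to build a pair of super- and subsolutions of the form
\[
\overline u(t,x)=\phi\bigl(x+c\xi(t)+ct\bigr)+q(t),\qquad
\overline v(t,x,y)=\psi\bigl(x+c\xi(t)+ct\bigr)+q(t),
\]
with an analogous pair for the subsolution (shift $\zeta(t)$, perturbation $-q(t)$), where $q(t)=C\varepsilon e^{-\omega t}$ and $\xi(t),\zeta(t)$ are suitable $C^1$ functions converging to constants $\xi_1^\pm$. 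Plugging this ansatz into \eqref{readi} and using that $(\phi,\psi)$ solves the wave equation, one is left with a differential inequality that must be satisfied by $(\xi,q)$; the reaction term is handled by splitting $\mathbb R$ into the region where $\psi$ is close to $0$ or to $1$ (there $f'<0$ or $f\equiv0$, giving a good sign via $f'(1)<0$ and the ignition condition) and the bounded intermediate region (where one exploits that $\psi'$ is bounded below, so the $c\dot\xi(t)\psi'$ term can absorb the error). This produces the standard pair of ODEs $\dot\xi(t)=-M q(t)$, $\dot q(t)=-\omega q(t)$, whose solution gives the finite limiting shifts $\xi_1^\pm$ and the claimed exponential relaxation.

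The first genuine step is therefore to make precise the class $\mathcal P_{\alpha_0}$ of front-like data (decay rate $\alpha_0$ at $+\infty$, closeness to the stable state at $-\infty$) and to squeeze an arbitrary datum in $\mathcal P_{\alpha_0}$ between a sub- and a supersolution of the above type at $t=0$; this is where $\alpha_0$ (and nothing else about the data) enters the choice of the constant $C$ and, via comparison of exponential decay rates, the exponent $\omega$. Then one invokes the comparison principle for \eqref{normal}/\eqref{readi} — available from \cite{BRR,D1,D2} and recalled in the excerpt as the monotone structure underlying Theorem \ref{existence} — to propagate the ordering for all $t>0$. Finally one reads off the statement: $\xi_1^\pm$ are the limits of the shift functions, and the Robin exchange condition on $\{y=0\}$ is compatible with the ansatz because $\overline u,\overline v$ are shifted by the \emph{same} amount and perturbed by the \emph{same} constant $q(t)$, so the boundary terms reproduce exactly those of the exact wave.

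The main obstacle is the uniformity in $D$. Two points need care. First, the constant $M$ in $\dot\xi=-Mq$ comes from a lower bound on $\psi'$ on the intermediate region and from upper bounds on $f'$, $\psi$, $\psi_y$; one must check, using the rescaled limiting problem of \cite{D2}, that these bounds — and hence $\omega$ — can be chosen independently of $D>d$, and that the speed $c(D)$, though it diverges like $c_\infty\sqrt D$, enters the inequalities only through the combination $c\dot\xi\,\psi'$ which scales correctly. Second, the field equation in \eqref{readi} carries the coefficient $d/D$ in front of $\partial_{xx}v$; since $\psi$ (in the rescaled variable) has $x$-derivatives bounded uniformly in $D$ and $d/D\le 1$, this term is harmless in the supersolution inequality, but it must be explicitly carried so that the reader sees it does not degrade $\omega$. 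Everything else — the splitting of $\mathbb R$, the ODE analysis, the passage to the limit in the shifts — is routine once these uniformities are in hand, which is precisely why, as the excerpt warns, the computations are written out in detail rather than quoted from \cite{FML}.
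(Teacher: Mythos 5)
Your overall outline — trap the initial datum between two translates, build Fife--McLeod-type sub/supersolutions with a time-dependent shift $\xi(t)$ obeying $\dot\xi \propto q$, $\dot q = -\omega q$, split the line into three zones, invoke the comparison principle, track uniformity in $D>d$ — is exactly the skeleton of the paper's argument. But the ansatz you propose,
\[
\overline u=\phi(x+c\xi(t)+ct)+q(t),\qquad \overline v=\psi(x+c\xi(t)+ct,y)+q(t),\qquad q(t)=C\varepsilon e^{-\omega t},
\]
does not close, for two reasons which are precisely the places where the paper departs from the classical single-equation bistable argument.

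\textbf{The perturbation must be weighted ahead of the front.} You assert that in the region where $\psi$ is near $0$, ``$f\equiv 0$, giving a good sign via \ldots the ignition condition.'' This is wrong: $f\equiv0$ on $[0,\theta]$ provides \emph{no} damping — the linearised reaction term vanishes, it does not have a sign. After substituting the ansatz and cancelling the wave equation, the residual for $\overline v$ ahead of the front is essentially $c\dot\xi\,\psi_x+\dot q$, and since $\psi_x$ decays exponentially while $\dot q = -\omega q<0$, the supersolution inequality fails there. (Fife--McLeod works because in the bistable case $f'(0)<0$ supplies the damping; you cannot carry that over.) The paper fixes this by replacing $q(t)$ with $q(t)\Gamma(x+c\xi(t))$, where $\Gamma\sim e^{\alpha x}$ ahead of the front: the spatial derivatives of $\Gamma$ then contribute a strictly positive term $q(\alpha c-(d/D)\alpha^2)\Gamma$ which absorbs $-\omega q\Gamma$ when $\omega$ is small. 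The exponent $\alpha=\min(\alpha_0,c/5)$ is what makes $\omega$ depend on $\alpha_0$.

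\textbf{The perturbations of $u$ and $v$ cannot be equal.} The $u$-equation carries no reaction term. Behind the front, where $\Gamma\equiv1$, the residual for $\overline u$ is $c\dot\xi\phi_x+\dot q_u/\mu+(q_u-q_v(\cdot,0))$, and if $q_u\equiv q_v$ the coupling term vanishes, leaving $c\dot\xi\phi_x+\dot q_u/\mu<0$ far from the interface (since $\phi_x$ decays exponentially). The paper therefore separates the perturbations, taking $q_u(t)=\varepsilon Ce^{-\omega t}$ and $q_v(t,y)=\varepsilon h(y)e^{-\omega t}$ with $h(y)=\cosh(\sqrt{\kappa/d}(y+L))$ and $C=\cosh(\sqrt{\kappa/d}L)+\sinh(\sqrt{\kappa/d}L)$, so that $q_u>q_v(\cdot,0)$; the Robin exchange condition $dh'(0)+h(0)=C$ forces exactly this relation, and the closing of the $u$-inequality behind the front then requires $\omega\le G(\sqrt{\kappa/d}L)$ with $G(x)=\mu\tanh x/(1+\tanh x)$. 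Your remark that ``the boundary terms reproduce exactly those of the exact wave'' because $\overline u,\overline v$ carry the same constant $q$ is the symptom of this gap: it is precisely because matching the Robin condition with a constant-in-$y$ perturbation leaves $q_u-q_v(\cdot,0)=0$ that the $u$-equation cannot be handled.

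Both corrections are forced by the structure (degenerate reaction in $v$, no reaction in $u$, Robin coupling) and account for the specific constants $\alpha$, $\kappa$, $\omega$ in Theorem \ref{propafrontlike}. Without them your three-zone argument cannot be completed. The rest of your proposal — the class $\mathcal P_{\alpha_0}$, the initial trapping, the role of $\delta_{L_0}$ in the middle zone, uniformity via the rescaled limiting problem — is correct and matches the paper.
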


\begin{rmq}
\label{rmqharnack} 
 The previous theorem does not give the convergence towards travelling waves, but it gives a precise spreading velocity. In \cite{MNRR}, this is the starting point of an iterative argument showing a geometric decrease of the distance separating the two shifts with respect to a fixed time step. One could think of adapting the argument of \cite{MNRR} to \eqref{readi}, but  this would not be uniform in $D > d$. We prefer to focus on the really new features of the model.
\end{rmq}
We now turn to what happens for compactly supported initial data (see Theorem \ref{propacompact} for a precised statement):
\begin{thm}
\label{thmintropropacc} [Stated for equation \eqref{normal}]
Let $(u_0,v_0)$ be non-negative smooth compactly supported data. There exist $\delta > 0$ and $M = O(\sqrt D)$ such that if $\mu u_0, v_0 > 1-\delta$ for $x\in (-M,M)$ then $\mu u, v$ stays trapped (up to an exponentially decaying error) between two shifts of a pair of travelling waves evolving in both directions.
\end{thm}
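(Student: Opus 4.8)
\textbf{Proof proposal for Theorem \ref{thmintropropacc}.}

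The plan is to build compactly supported sub- and super-solutions for the system \eqref{normal} out of the travelling waves $(\phi,\psi)$, following the classical Aronson--Weinberger--Fife--McLeod strategy, but carefully tracking the $D$-dependence. First I would recall that, by Theorem \ref{existence}, the solution is trapped in $[0,1/\mu]\times[0,1]$, and that by parabolic regularity and the ignition structure ($f\equiv 0$ on $[0,\theta]$) nothing moves until the solution has built up mass above the ignition threshold somewhere. The key quantitative input is the ignition nonlinearity: if $v$ stays below $\theta$ everywhere, the field equation is just the (anisotropic) heat equation with linear exchange, so $v$ decays; conversely, once $v>1-\delta$ on a sufficiently large $x$-interval, one can ignite a propagating front.

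Next I would construct the trapping super-solution. The natural candidate is a pair of shifted travelling waves moving in opposite directions, $\overline u(t,x)=\phi(x+ct+\sigma(t))+\phi(-x+ct+\sigma(t))$ capped at $1/\mu$ (and similarly for $v$), with $\sigma(t)=\sigma_0(1-e^{-\omega t})$ a bounded shift absorbing the cross terms and the defect coming from the fact that the sum of two one-dimensional waves is not an exact solution; the exponential relaxation exponent $\omega$ is the same as in Theorem \ref{thmintropropafl}, in particular independent of $D>d$. The sub-solution is the more delicate object: I would take $\underline u(t,x)=\max\bigl(\phi(x+ct-\sigma(t))+\phi(-x+ct-\sigma(t))-1/\mu,\,0\bigr)-C\e e^{-\omega t}$, and likewise for $v$, and check the differential inequalities on the (finitely many) regions cut out by the max. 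Here the Robin exchange condition on the road and the Neumann condition at $y=-L$ must be respected by the comparison functions; since $\psi$ solves the travelling-wave problem with exactly these boundary conditions, the only error terms are again quadratic in the overlap of the two profiles and are controlled by the shift. The condition $\mu u_0,v_0>1-\delta$ on $(-M,M)$ is exactly what is needed so that at $t=0$ the sub-solution (which near $x=0$ equals roughly $2\cdot 1/\mu - 1/\mu=1/\mu$ minus the error) sits below $(u_0,v_0)$; the requirement $M=O(\sqrt D)$ arises because in the rescaled variables of \eqref{readi} the wave profile $\phi$ has an intrinsic width that is $O(1)$ in $x$, hence $O(\sqrt D)$ in the original $x$ variable, so one needs an interval of that length to fit the two oppositely-moving profiles with enough overlap. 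I would fix $\delta$ and $\sigma_0$ from the requirement that the defect terms (estimated using $f'(1)<0$ near $v=1$ and $f\equiv0$ near the ignition end, together with exponential decay of $\phi,\psi$ and $\phi',\psi'$) be dominated by the relaxation term $\omega\sigma'(t)$.

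With the ordered pair $\underline{\mathbf u}(0,\cdot)\le (u_0,v_0)\le \overline{\mathbf u}(0,\cdot)$ in hand, the comparison principle for \eqref{normal} (the monotone structure recalled after \eqref{readi}, established in \cite{BRR,D1,D2}) gives $\underline{\mathbf u}(t,\cdot)\le (u,v)(t,\cdot)\le \overline{\mathbf u}(t,\cdot)$ for all $t>0$, which is precisely the asserted trapping between two shifts of a pair of counter-propagating travelling waves up to an $O(\e e^{-\omega t})$ error. The main obstacle I anticipate is verifying the sub-solution inequality uniformly in $D$: the cross terms between the left- and right-moving profiles involve $\phi'(x+ct-\sigma)\phi'(-x+ct-\sigma)$-type products and the anisotropic diffusion $\tfrac{d}{D}\partial_{xx}^2 v$ in \eqref{readi}, and one must check that the bad contributions are genuinely of lower order — either exponentially small in the overlap region or absorbable by $f'(1)<0$ — without any constant blowing up as $D\to\infty$. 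This is where the precise exponential decay rates of $(\phi,\psi)$ from \cite{D2}, and their uniformity in $D$, are essential, and it is the step I would write out in full detail in the body of the paper (Theorem \ref{propacompact}).
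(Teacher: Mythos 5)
Your high-level strategy — pair of counter-propagating travelling waves, bounded exponentially-converging shifts, and the comparison principle from \cite{BRR,D1,D2} — is the one the paper uses in Theorem \ref{propacompact}. But three points in your construction would not close, and two of them are the actual crux of the computation.

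First, the supersolution you propose, $\overline u=\phi(x+ct+\sigma)+\phi(-x+ct+\sigma)$ capped at $1/\mu$ (and similarly $\overline v=\psi+\tilde\psi$ capped at $1$), is not a supersolution in general: $f$ is not subadditive, so in the crossover region near $x=0$ where both profiles are in their transition layer there is no reason why $f(\psi_1)+f(\psi_2)\geq f(\psi_1+\psi_2)$, and capping at $1$ does not help where the sum is still below $1$. The paper sidesteps this entirely by taking $\min(\bar u,\tilde u)$, $\min(\bar v,\tilde v)$, where $\bar u,\tilde u$ are two \emph{single}-front supersolutions (one increasing, one decreasing, exactly the objects built in Theorem \ref{propafrontlike}); the minimum of two supersolutions is automatically a supersolution, so no cross-term analysis is needed. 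You should adopt that instead of the capped sum.

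Second, the correction $-C\e e^{-\omega t}$ you subtract from the subsolution must be \emph{spatially weighted}. Because $f\equiv 0$ on $[0,\theta]$, ahead of each front there is no reaction to absorb the time-derivative term $-\omega q e^{-\omega t}$, and a correction constant in $x$ forces $\omega\le 0$. The paper's cure is the weight $\Gamma$, equal to $1$ behind the front and $e^{\alpha(x+L_0)}$ far ahead, so that in the linear zone the transport term $c\partial_x$ produces $c\alpha\Gamma$ which dominates $\omega\Gamma$ when $\omega$ is chosen below $c\alpha-\alpha^2$; for the two-sided subsolution one uses $\min(\Gamma,\tilde\Gamma)$. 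Moreover the paper puts this correction \emph{inside} the $\max$, i.e. $\underline u=\max(0,\phi+\tilde\phi-1/\mu-q_u\min(\Gamma,\tilde\Gamma)/\mu)$, which keeps the subsolution nonnegative and confines the verification to the zones where it is actually positive. Without the weight your differential inequality cannot be satisfied in the tails.

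Third, the correction in $v$ cannot be independent of $y$. Linearising the Robin condition $dv_y(\cdot,0)+v(\cdot,0)=\mu u$ and the Neumann condition $v_y(\cdot,-L)=0$ around $(\phi,\psi)$ forces $dq_v'(0)+q_v(0)=q_u$ and $q_v'(-L)=0$; with $q_u$ and $q_v$ constant in $y$ these are incompatible. The paper therefore takes $q_v(t,y)=\e\,\cosh\!\big(\sqrt{\kappa/d}\,(y+L)\big)e^{-\omega t}$ and matches $q_u$ accordingly, and the extra term $-d\partial_{yy}q_v=-\kappa q_v$ this produces is precisely what the choices of $\kappa$ and $\omega$ in Section \ref{wavelikes} are tailored to absorb. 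This is the second non-obvious ingredient, and the place where the uniformity in $D>d$ of $\omega$ has to be checked (as you rightly anticipate); noting that $\psi$ satisfies the boundary conditions does not by itself take care of the perturbation.

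Your reading of the role of $M=O(\sqrt D)$ and of the $\delta$-condition is correct, and the comparison-principle conclusion is as in the paper. With the three modifications above (minimum of two single-front supersolutions; weight $\min(\Gamma,\tilde\Gamma)$ inside the $\max$; $y$-dependent $q_v$ matching the boundary conditions), your argument becomes the paper's proof.
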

 \subsection{Data with $O(1)$ support and additional effects}
In this section we state the results that account for the above numerical simulations.
\begin{thm}
\label{thmintropropsmall}  [Stated for equation \eqref{normal}]
Let $L$ be large enough (independently of $D$). There exist $M', \delta' > 0$ independent of $D > d$  such that if the initial datum $(u_0,v_0)$ satisfies
$$v_0 > 1-\delta'\text{ for }x\in (-M',M')$$
then the following holds: let $h(D)$ be any infinitely increasing function as $D\to\infty$; then after a time $t_D = D^{1/2}h(D) + O(1)$, the functions $\mu u$ and $v$ satisfy the assumptions of Theorem \ref{thmintropropacc},  in other words:
 $$\mu u, v \geq 1-\delta\  \hbox{for $x\in(-M \sqrt D,M \sqrt D)$.}
 $$
 \end{thm}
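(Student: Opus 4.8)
The idea is to decouple the two effects that appear in the simulations: first, the slow accumulation of mass on the road fed by the field, which happens on the long time scale $D^{1/2}h(D)$; second, the fast homogenization of $u$ over an interval of length $O(\sqrt D)$, which is essentially instantaneous on that scale. I would work in the rescaled variables of \eqref{readi}, where the road equation is $\partial_t u - \partial_{xx}^2 u = v - \mu u$ and the field equation carries the vanishing coefficient $d/D$ in front of $\partial_{xx}^2 v$. The first step is to construct a subsolution to \eqref{readi} by a separation-of-scales ansatz: since $L$ is large, the pure field dynamics $\partial_t v - d\partial_{yy}^2 v = f(v)$ with the exchange condition $d\partial_y v = \mu u - v$ at $y=0$ and $\partial_y v = 0$ at $y=-L$ has a nontrivial ordered steady state bounded below by some $1-\delta'$ on a large $y$-interval, provided the datum starts above $1-\delta'$ on $(-M',M')$; this is the mechanism by which $v$ never dies out. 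One then shows that, as long as $v \geq 1-\delta'$ on a growing spatial interval, the road receives a positive source, so $u$ is bounded below by the solution of a linear heat equation with that source.

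The second step is the quantitative estimate on this linear problem. I would compare $\mu u$ from below with $w(t,x)$ solving $\partial_t w - \partial_{xx}^2 w = \mu(1-\delta') - w$ (or a localized version of the source, supported where $v$ is known to be large), with $w(0,\cdot)$ the initial road datum. Writing $w = (1-\delta') + e^{-t}(\text{heat-evolved correction})$, one sees that after $O(1)$ rescaled time the road level is close to $1-\delta'$ on a spatial interval whose length grows like $\sqrt{t}$; translating back to the original $x$ variable via $x \leftarrow x/\sqrt D$, this interval has physical length $\sim \sqrt{Dt}$, so to reach length $M\sqrt D$ one needs rescaled time $t \sim h(D)^2/$something — here I must be careful: the statement says $t_D = D^{1/2}h(D)+O(1)$, i.e. in the \emph{unrescaled} time, $t_D/\sqrt D = \sqrt D\, h(D)/\sqrt D \cdot$... so in rescaled time we want $\tau = t_D$ still, and the heat kernel on the road spreads mass over $\sqrt{\tau}$ in rescaled $x$, i.e. $\sqrt{\tau D}$ in physical $x$; demanding $\sqrt{\tau D} \gtrsim M\sqrt D$ gives $\tau \gtrsim M^2$, which is $O(1)$ — so the spatial spreading is cheap, and the real cost $D^{1/2}h(D)$ must come from waiting for the \emph{amplitude} to climb, because the road starts from $u_0$ supported on $O(1/\sqrt D)$ in rescaled variables, carrying total mass $O(1/\sqrt D)$, and it takes time for the field-to-road flux to pump the level up to $1-\delta$. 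So the correct bookkeeping is: the source term $v-\mu u$ integrated against the road heat semigroup accumulates, and the level on $(-M\sqrt D, M\sqrt D)$ reaches $1-\delta$ only after the cumulative flux overcomes the $O(1/\sqrt D)$ deficit spread over an $O(\sqrt D)$-interval, which is exactly where the $h(D)$ (arbitrarily slowly increasing) enters — any such $h$ suffices, reflecting that the threshold is crossed at a time $D^{1/2}(1+o(1))$ up to lower-order corrections.

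The third step is to promote "$\mu u, v$ slightly above a small threshold on a long interval" to "$\mu u, v \geq 1-\delta$ on $(-M\sqrt D, M\sqrt D)$" with the $\delta$ of Theorem \ref{thmintropropacc}. Here I would feed the lower bound obtained above as an initial datum at time $t_D - O(1)$ into a short-time ignition argument: once $v$ exceeds $\theta$ uniformly on a large spatial interval, the reaction term $f(v)>0$ drives $v$ up towards $1$ on the bulk of that interval in time $O(1)$, and correspondingly $u$ follows via the coupling; the $O(1)$ loss of interval length at the two ends (due to finite-speed-like behaviour of the nonlinear push-up) is negligible against $M\sqrt D$. This is a standard sub/supersolution construction for ignition nonlinearities, uniform in $D$ because after rescaling the only $D$-dependence left is the benign coefficient $d/D \le d$ in front of $\partial_{xx}^2 v$, which only \emph{helps} the comparison.

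The main obstacle I anticipate is making the two-scale subsolution rigorous and, in particular, getting the constant in front of $D^{1/2}$ and the role of $h(D)$ exactly right: one must show both that $t_D$ is \emph{sufficient} (lower bound on $u,v$) and implicitly that one cannot do much better, i.e. the $D^{1/2}$ scaling is sharp — this requires a matching supersolution showing $u$ stays below $1-\delta$ on the long interval for times $o(D^{1/2})$, which is where the exchange condition $d\partial_y v = \mu u - v$ and the smallness of $d/D$ must be exploited to control how fast the field can actually refill the road. Keeping every estimate uniform in $D>d$ — especially the largeness of $L$ being chosen independently of $D$, and the ignition-type push-up constants not degenerating — is the recurring technical burden, but it follows the template already set up for Theorems \ref{thmintropropafl}--\ref{thmintropropacc}.
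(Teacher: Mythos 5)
The paper's proof has three steps that your proposal partially tracks but materially misreads in two places.

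\emph{Where you are aligned.} Step 1 of the paper is exactly your separation-of-scales idea, but simplified: since $u$ is small for small times, one drops $u$ from the exchange condition and considers the closed Robin problem \eqref{robinv}, which admits the steady state $p(y)$ of Lemma \ref{stat} with $p(-L)=1-\delta''>1-\delta$ for $L$ large. Step 2 builds the Berestycki--Nirenberg traveling wave connecting $0$ to $p(y)$ and reruns the Theorem \ref{propacompact} subsolution construction to obtain Lemma \ref{lowerboundbottom}: $v(t,x,-L)\geq(1-\delta'')\varphi_t(x)-Ce^{-bt}$, with the bump $\varphi_t$ equal to $1$ on $|x|\leq \tfrac{c_p}{2}t$, $c_p=O(1)$. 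Your ``the field has a slow, $D$-independent mechanism by which $v$ never dies out'' is the right picture.

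\emph{First gap: the role of $h(D)$.} You write that the spatial spreading is cheap (the road heat kernel spreads $\sqrt{\tau D}$ in time $\tau$) and that the ``real cost $D^{1/2}h(D)$ must come from waiting for the amplitude to climb.'' That is not what is happening. The bottleneck is the field subsolution of \eqref{robinv}, which travels at the fixed speed $c_p$ in \emph{physical} $x$; after time $T_D$ its plateau covers $|x|\leq \tfrac{c_p}{2}T_D$, which in rescaled $x$ is the interval $(-\tfrac{c_p}{2}h(D),\tfrac{c_p}{2}h(D))$. The paper needs this rescaled interval to diverge — not merely to reach $M$ — because Step 3 is a compactness argument: one extracts a subsequential limit of shifted solutions, and the diverging interval makes the limiting bottom boundary datum identically $1-\delta''$, at which point the parabolic maximum principle (plus Hopf) forces $\mu u_\infty=v_\infty\equiv 1-\delta''$. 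So $h(D)$ buys spatial overshoot needed to make the limit problem degenerate to a constant-boundary system, not amplitude growth. Your bookkeeping, if pursued literally, would only give $T_D\sim\sqrt D$ and would not justify passing to the limiting equation \eqref{uvinf}.

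\emph{Second gap: the degenerate $x$-diffusion is not benign.} You state that the coefficient $d/D$ in front of $\partial^2_{xx}v$ ``only helps the comparison.'' It helps comparison principles, but it wrecks the compactness step: the interior parabolic estimates in $x$ degenerate as $d/D\to0$, and without uniform-in-$D$ $\mathcal{C}^{2,\alpha}$-type bounds there is no convergent subsequence. The paper gets around this precisely because the decoupled system \eqref{uvn} is \emph{linear}: one applies the maximum principle directly to $x$-derivatives of $(\underline u_n,\underline v_n)$ to obtain uniform bounds on $\partial^2_{xx}, \partial^3_{xxx}$, and obtains $y$- and $t$-regularity by rescaling $x\leftarrow x\sqrt{D_n}$ so that the operator becomes $-d\Delta$, to which standard Schauder estimates up to the Robin boundary apply. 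Your direct comparison with a one-dimensional linear heat problem $\partial_t w-\partial^2_{xx}w=\mu(1-\delta')-w$ never produces a lower bound on $v$ in the bulk of the strip, and the third step you propose (an ignition push-up once $v>\theta$) is unnecessary here — the paper explicitly shows diffusion alone from the boundary datum $1-\delta''$ suffices, using only $f\geq0$ in the subsolution. Finally, the matching supersolution you mention to show sharpness is the subject of the separate Theorem \ref{t2.1} and plays no role in this proof.
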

 As a consequence, starting from the time $t=t_D$, propagation occurs as described in Theorem \ref{thmintropropacc}.

One could argue that this happens in a much smaller time. The next theorem shows that, even if the solution may not take all the time
$t_D$ to fall into the assumptions of Theorem  \ref{thmintropropacc},  it stills needs a lot of time. To what extent the upper bound in the preceding theorem, and the lower bound in the next theorem, can be reconciled, is a very interesting question that we leave for future work.
\begin{thm}
\label{t2.1}  [Stated for equation \eqref{normal}]
Let  $M', \delta' > 0$ be as in Theorem  \ref{thmintropropsmall}. For every $\kappa>0$, there exists $C_\kappa>0$ such that, if
\begin{equation}
\label{e2.1}
t\leq C_\kappa D^{1/7-\kappa},
\end{equation}
then $(\mu u(t,.),v(t,.,.))$ does not satisfy the assumptions of Theorem \ref{thmintropropacc}. More precisely we have, uniformly in $t$ satisfying \eqref{e2.1}:
$$
\lim_{D\to+\infty}\Vert u(t,.)\Vert_\infty=0,\  \  \  \  \lim_{D\to+\infty}\frac{\bigl\vert\{(x,y)\in\Omega_L:\ v(t,x,y)\geq\theta\}\bigl\vert}{\sqrt D}=0.
$$
 \end{thm}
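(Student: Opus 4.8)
\textbf{Proof proposal for Theorem \ref{t2.1}.}

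The plan is to derive an \emph{a priori} quantitative upper bound on the total mass of the system at time $t$ and show it is too small, for $t$ as in \eqref{e2.1}, to support the configuration required by Theorem \ref{thmintropropacc}. I would work in the renormalized variables \eqref{readi} and introduce the total mass $m(t) = \int_{\mathbb R} u(t,x)\,dx + \frac{1}{\mu}\iint_{\Omega_L} v(t,x,y)\,dx\,dy$ (the weights chosen so that the linear exchange terms cancel). Differentiating and using the boundary conditions, the only source is the reaction, so $m'(t) = \frac{1}{\mu}\iint_{\Omega_L} f(v) \leq \frac{\|f\|_\infty}{\mu}\,\bigl|\{v \geq \theta\}\bigr|$. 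Hence controlling $m(t)$ reduces to controlling the measure of the super-ignition set $\{v\geq\theta\}$, and conversely a small mass forces this measure to be small; the two quantities feed into each other and must be estimated together via a Gronwall-type argument. The key auxiliary estimate is an $L^\infty$ bound: because $v$ is bounded by $1$ and diffuses with coefficients $d/D$ in $x$ and $d$ in $y$ on a strip of fixed height $L$, one can bound $\|v(t,\cdot,\cdot)\|_\infty$ and $\|u(t,\cdot)\|_\infty$ by the heat semigroup acting on data of mass $m(t)$; the parabolic scaling of the road equation $\partial_t u - \partial_{xx}^2 u = v-\mu u$ (with $D$ scaled out) is what produces the $D$-dependent time threshold — roughly, $\|u(t,\cdot)\|_\infty \lesssim t^{-1/2}\,(\text{mass fed from below}) + \ldots$, and tracking the exponents carefully yields the power $D^{1/7}$.

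Concretely I would proceed as follows. First, establish that as long as $v < 1$ strictly, i.e. before the field "saturates," the mass grows at most like $m(t) \leq m(0) + C\int_0^t \bigl|\{v(s)\geq\theta\}\bigr|\,ds$. Second, bound $\bigl|\{v(s)\geq\theta\}\bigr| \leq \theta^{-1}\iint v(s) \leq \frac{\mu}{\theta} m(s)$ trivially, which already gives exponential-in-time growth $m(t) \leq m(0)e^{Ct}$ — too weak. To do better, use the $L^\infty$ decay of the road: split $v = v_1 + v_2$ where $v_1$ solves the linear problem with no reaction and $v_2$ carries the reaction term; for $v_1$, the exchange with the rapidly-diffusing road drains mass off the strip and the road spreads it over a spatial scale $\sqrt{t}$ (in renormalized variables, scale $\sqrt{Dt}$ in the original $x$), so $\|v_1(t)\|_\infty$ and the measure $|\{v_1\geq\theta/2\}|$ decay in $t$; meanwhile $v_2$ is controlled by $\int_0^t \|f(v(s))\|_\infty\,ds$, itself small because $f$ vanishes on $[0,\theta]$ and $f'$ is bounded, giving $\|f(v)\|_\infty \lesssim \|(v-\theta)_+\|_\infty$. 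Closing this loop produces a differential inequality whose solution stays $o(\sqrt D)$ precisely when $t \lesssim D^{1/7-\kappa}$. Finally, since Theorem \ref{thmintropropacc} requires $\mu u, v \geq 1-\delta$ on an interval of length $2M\sqrt D$ — which forces $m(t) \geq c\sqrt D$ — the bound $m(t) = o(\sqrt D)$ rules this out, and the stated limits $\|u(t,\cdot)\|_\infty \to 0$ and $|\{v\geq\theta\}|/\sqrt D \to 0$ fall out of the same estimates.

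The main obstacle I anticipate is making the interaction between the road $L^\infty$-decay and the strip mass \emph{quantitative with the right power of $D$}: one needs the gain from the fast road diffusion (spatial spreading over scale $\sqrt{Dt}$, hence $L^\infty$ height $\sim m/\sqrt{Dt}$ on the road, fed back into the strip through the Robin condition on a bounded-height domain) to beat the exponential amplification coming from the reaction, and the exponent $1/7$ strongly suggests an iteration or bootstrap where each round improves the power of $t$ by a fixed fraction, converging to a fixed point of the scaling relation. Getting the bookkeeping of these exponents right — in particular keeping every constant independent of $D$ and handling the transfer of estimates between the strip and the road through the trace term $v(t,x,0)$, for which one needs parabolic trace/regularity estimates uniform in $D>d$ — is where the real work lies. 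The vanishing of $f$ on $[0,\theta]$ is essential and must be used sharply: it is what turns "the solution is still small" into "the reaction is still off," preventing any runaway before time $t_D$.
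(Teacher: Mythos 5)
Your proposal takes a genuinely different route from the paper, and unfortunately it has a gap that I do not see how to fix within the mass/Gronwall framework you set up. The paper's argument is not a mass estimate at all: it sets $\varepsilon=D^{-1/2}$ and compares $v$ with the auxiliary solution $v^0$ of the problem \emph{with the reaction but without the road coupling} (equation \eqref{v0}), then studies $w=v-v^0$. The point of this choice of comparison is that $v^0$ is a propagating front moving at a speed $c_p$ independent of $D$, so that the zero-order coefficient $f'(\cdot)$ appearing in the linearised equation for $w$ is \emph{spatially localised} in a strip of width $O(t)$. That localisation is what breaks the naive exponential Gronwall growth and is the real engine of the $D^{1/7}$ threshold: the exponents $\delta<\alpha$ and $\delta<\tfrac25(1-\alpha)$ come from a $t^{3/2}$ Gaussian-Duhamel estimate on $u$ and $\bar w$ plus a supersolution built from translates of $\partial_x\psi$, and the limiting case $\delta=\alpha=2/7$ gives the claimed power.

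Your decomposition $v=v_1+v_2$ with $v_1$ solving the \emph{linear, reaction-free} problem and $v_2$ carrying all of $f(v)$ does not produce this localisation, and this is where the argument breaks. After $v_1$ has dropped below $\theta$ you would need $\|f(v)\|_\infty\lesssim\|(v-\theta)_+\|_\infty$ to be small, but this is false: since $v\geq v^0$ by comparison, and $v^0$ is a front connecting $0$ to a state near $1$, the solution $v$ actually reaches values close to $1$ inside a region that grows linearly in time, so $\|f(v(s))\|_\infty$ is $O(1)$ for $s\geq O(1)$, not $o(1)$. Feeding this back into your Gronwall loop for $m(t)$ (or for $\iint v_2$) gives only the exponential bound $m(t)\leq m(0)e^{Ct}$ that you already dismissed as too weak. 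The fix is precisely to change the reference object from "linear, no reaction" to "reaction, no road": once you subtract $v^0$ instead of $v_1$, the effective reaction coefficient $f'(\cdot)$ is supported on the slowly-growing ignited set of $v^0$, and the argument has a chance to close. Your heuristic about the spatial spreading over scale $\sqrt{Dt}$ on the road and the role of the degeneracy of $f$ is correct intuition, but without the front comparison the exponent bookkeeping has no anchor, and "iteration converging to a fixed point of the scaling relation" will not spontaneously produce the $t^{3/2}$ from which $1/7$ actually emerges.
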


Finally, we investigate the  situation of an initial datum supported only on the road. The behaviour that we find  does not at all look like what we have just discovered for initial data supported in the field. If $\mu u_0 \leq 1$ has a support of size $\leq C \sqrt D$  there will be extinction.  On the other hand, we also provide conditions on $\mu$ for invasion to happen.

\begin{thm}
\label{thmintroproparoutel} [Stated for equation \eqref{readi}]
Let $v_0 \equiv 0$ and $\mu u_0 = \mathbf 1_{(-a,a)}$ be initial data for \eqref{readi} and $u, v$ the associated solutions. We have the following~:
\begin{itemize}
\item There exists $a_0 > 0$ independent of $D$ such that if $a < a_0$, $\mu u $ and $v$ decay to $0$ uniformly as $t\to +\infty$.
\item If $a = +\infty$ there are thresholds $\mu^\pm$ independent of $D$ such that for $\mu < \mu^-$ invasion occurs and for $\mu > \mu^+$, $\mu u$ and $v$ converge uniformly to  $1/(\mu(L+1/\mu)) \leq \theta$.
\item More generally, provided $\mu < \mu^ -$, there exists $a_1 > 0$ independent of $D$ such that if $a > a_1$, invasion occurs.
\end{itemize}
\end{thm}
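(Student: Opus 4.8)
\textbf{Proof proposal for Theorem~\ref{thmintroproparoutel}.}

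The plan is to treat the three items by three different comparison arguments, all resting on the renormalized system \eqref{readi}, in which the road equation is $D$-independent and the field equation carries a vanishing $\frac{d}{D}\partial_{xx}^2$ term. The unifying idea is that, since $v_0\equiv 0$ and $f\equiv 0$ near $0$, the dynamics starts out purely linear: as long as $v$ stays below $\theta$ everywhere, $v$ solves a linear heat equation in $\Omega_L$ with a Robin-type source fed by $u$, and $u$ solves a linear equation fed back by the trace of $v$. For the first (extinction) item I would build an explicit supersolution of this linear system. The natural ansatz is $(\overline u,\overline v)=(A e^{-\lambda t}\Phi(x), B e^{-\lambda t}\Psi(x,y))$ with $\Phi,\Psi>0$ bounded (e.g. $\Phi=\Psi|_{y=0}$ of the form $\cosh$ or a Gaussian-in-$x$ profile), where $\lambda>0$ and $A,B$ are chosen so that the coupled ODEs in $(A,B)$ — coming from the Robin condition $d\partial_y v + v=\mu u$ and the reaction balance on the road — are satisfied; here I would exploit that the field contributes a spectral gap $\gtrsim d/L^2$ from the Neumann--Robin Laplacian on the interval $(-L,0)$. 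Since the initial bump has mass $\mathcal O(a)$, for $a<a_0$ small one can fit $A e^{-0\cdot}\Phi \geq \mu u_0 = \mathbf 1_{(-a,a)}$ while keeping $B e^{-0}\Psi < \theta$ uniformly, so the nonlinearity never switches on and the explicit supersolution forces uniform decay; crucially $a_0$ depends only on $\mu,d,L$ because in \eqref{readi} the road diffusion is normalized to $1$.

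For the second item, with $a=+\infty$ the solution is $x$-independent, so \eqref{readi} collapses to the ODE-PDE system $u_t = v(t,0)-\mu u$, $v_t - d v_{yy} = f(v)$ on $(-L,0)$ with $d v_y(t,0)+v(t,0)=\mu u$, $v_y(t,-L)=0$. The stationary states solve $v''=0$ hence $v\equiv$ const $=:\bar v$ and $\mu u = \bar v$ with $\bar v = \mu u = \frac{1}{L+1/\mu}\cdot(\text{mass})$... more precisely the homogeneous steady state with $v\equiv \bar v$ forces $\mu u=\bar v(1+0)$ and the total-mass constraint of the conserved-type quantity gives $\bar v = 1/(\mu(L+1/\mu))$ only in the relevant branch; I would instead argue directly: integrating the $v$-equation over $(-L,0)$ and adding the $u$-equation yields $\frac{d}{dt}\big(u + \int_{-L}^0 v\big) = \int_{-L}^0 f(v)$, so if $v<\theta$ always the bracket is conserved, and one checks that starting from $(u_0=1/\mu,\ v_0=0)$ the only possible limit with $v<\theta$ is the constant $1/(\mu(L+1/\mu))$, which is $<\theta$ exactly when $\mu$ is large — this defines $\mu^+$. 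Monotone convergence to this state follows since the homogeneous system is order-preserving and the constant state is a stable attractor for the linear (sub-$\theta$) regime; one shows $v$ can never reach $\theta$ when $\mu>\mu^+$ by comparison with this conserved quantity. For $\mu<\mu^-$ one instead needs invasion: here the homogeneous field steady state would have to exceed $\theta$, so the constant $1/\mu$ on the road is a strict subsolution that pushes the system up; combined with the travelling waves of \cite{D2} (which exist for the ignition nonlinearity) and Theorem~\ref{thmintropropafl}, propagation at speed $\sim c_\infty\sqrt D$ ensues. The threshold separation $\mu^-\le\mu^+$ comes from the monotonicity of $\bar v(\mu)=1/(\mu(L+1/\mu))$ in $\mu$ and the fixed ignition level $\theta$.

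The third item interpolates: for finite but large $a$ and $\mu<\mu^-$, I would use the infinite-$a$ analysis to produce, on a long $x$-interval, an approximate homogeneous profile that has climbed above $\theta$; quantitatively, restrict attention to $|x|<a/2$ and compare with the $a=+\infty$ solution minus a boundary-layer correction that is controlled on a time scale where the finite-$D$ road diffusion has spread the bump only by $\mathcal O(\sqrt{t})$ in the renormalized variable. Once $v\ge\theta$ on an $x$-interval of length bounded below independently of $D$ (this forces $a_1$ independent of $D$, using again that road diffusion is normalized), one ignites the reaction and invokes the compactly-supported propagation Theorem~\ref{thmintropropacc}/\ref{propacompact} after rescaling back, or directly a travelling-wave subsolution, to conclude invasion. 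The main obstacle, and the step I would spend the most care on, is the second item's dichotomy: proving that when $\mu>\mu^+$ the solution genuinely stays below $\theta$ for all time (not merely that the sub-$\theta$ steady state is attracting) requires a sharp use of the conserved quantity $u+\int_{-L}^0 v$ together with a maximum-principle argument showing $v(t,\cdot)$ stays below its spatial average plus a term that decays, so that the supremum of $v$ never crosses $\theta$; and symmetrically, pinning down $\mu^-$ so that $\mu<\mu^-$ really forces the homogeneous orbit above $\theta$ needs a careful phase-plane / energy estimate on the scalar problem $v_t-dv_{yy}=f(v)$ with the dynamic Robin condition. Verifying that all thresholds $a_0,a_1,\mu^\pm$ are $D$-independent is then immediate from working throughout in \eqref{readi}.
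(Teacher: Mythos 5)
Your plan diverges from the paper's proof on all three items, and on the first one there is a genuine gap. For small $a$, you propose a separated-variables supersolution $(Ae^{-\lambda t}\Phi,\,Be^{-\lambda t}\Psi)$ and claim that ``for $a<a_0$ small one can fit $A\Phi\geq\mu u_0$ while keeping $B\Psi<\theta$.'' But $\mu u_0=\mathbf 1_{(-a,a)}$ has $\Vert\mu u_0\Vert_{L^\infty}=1$ for every $a>0$; the smallness in $a$ is an $L^1$-smallness, not an $L^\infty$-smallness. If $\Phi$ is bounded, ordering $A\Phi\geq\mathbf 1_{(-a,a)}$ at $t=0$ forces $A\geq 1/\Phi(0)$ independently of $a$, and the Robin condition then pins $B$ to the same order; so nothing in the ansatz shrinks with $a$, and the smallness of $a_0$ never enters. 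The missing mechanism is that the $L^1$-smallness of $u_0$ must first be converted into $L^\infty$-smallness by the (unit-coefficient) road diffusion over a short time, while $v$ has not yet had time to grow. The paper implements this via Duhamel: first an Aronson-type bound $v(t,x,y)\leq C(t+2C'\sqrt t)$ (Lemma~\ref{vsqrtt}, proved by an eigenfunction expansion of the Neumann heat kernel on $(-L,0)$ applied to the singular source $\mathrm d\lambda_{y=0}$), then $u(t,x)\leq \frac{2e^{-\mu t}}{\sqrt{4\pi t}}a + Ct^2/2 + \frac{4CC'}{3}t^{3/2}$, so that at a fixed time $t_1=O(1)$ both $\mu u$ and $v$ are $\leq 2\theta/3$ provided $a<a_0$, after which the constant $2\theta/3$ (a steady state, since $f\equiv 0$ there) is a supersolution.

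For $a=+\infty$, your identification of the limit through the conserved quantity $u+\int_{-L}^0 v$ coincides with the paper's, but the dichotomy is resolved differently. You flag --- correctly --- that ``the limit is $<\theta$'' does not by itself keep $v$ below $\theta$ for all $t$, and you propose a conserved-quantity-plus-average argument; this is risky because the Robin condition $dv_y+v=\mu u$ acts as a source, so there is no a priori reason for $v$ to stay below its spatial average. The paper instead runs the same short-time Duhamel scheme as in item~1: $v\leq C\sqrt t$ for $t\leq 1$, hence $\mu u\leq e^{-\mu t}+C\mu t^{3/2}$, and at $t_\mu\sim\mu^{-2/3}$ (so $t_\mu\to 0$ as $\mu\to\infty$) one gets $\mu u,v\leq\theta$ provided $\mu\geq\mu^+$; the comparison principle with the steady state $\theta$ then locks this in forever, and the conserved quantity gives the limiting constant. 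For $\mu<\mu^-$, the paper likewise gives a quantitative version of your sketch: $\mu u\geq e^{-\mu t}\geq\theta'$ for $t\leq t_M\sim\mu^{-1}$, then an eigenfunction expansion of $\theta'-\underline v$ under the Robin/Neumann Laplacian shows that $v(t_M,\cdot)\geq(1+3\theta)/4$ on $(-L_0,0)$ if $\mu\leq\mu^-$, which feeds Kanel'/Aronson--Weinberger.

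For large finite $a$, your ``boundary-layer correction'' sketch would be the hard part to make uniform in $D$. The paper's route is a clean weighted-norm Lipschitz estimate on the flow (Lemma~\ref{lemweight}): with $\rho(x)=e^{-|x|}$, the transformed unknowns $(\rho u,\rho v)$ satisfy a semilinear parabolic system with bounded coefficients and a Lipschitz nonlinearity, to which the maximum principle applies uniformly in $D>d$; hence the flow is Lipschitz in $\Vert\cdot\Vert_X=\max(\Vert\rho\,\cdot\Vert_{L^\infty(\mathbb R)},\Vert\rho\,\cdot\Vert_{L^\infty(\Omega_L)})$ on bounded time intervals, and $\Vert\mathbf 1_{(-a,a)}-\mathbf 1_{\mathbb R}\Vert_X=e^{-a}$, so for $a>a_1$ the solution at time $T$ is within $\delta/2$ of the $a=\infty$ solution on $(-M,M)$ and Theorem~\ref{propacompact} takes over. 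This is the key lemma you would need to supply to make the third item rigorous and $D$-uniform.
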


\begin{rmq}
It is quite natural that $\mu$ too large leads to extinction: indeed, we normalised $u$ so that $u \leq 1/\mu$ and moreover $\mu$ acts as a death rate in the equation on $u$. Meanwhile, $v$ sees the same initial boundary Robin condition $\equiv 1$ independently of $\mu$.
\end{rmq}

\subsection{Bibliographical study and discussion}
\label{biblioch3}

The general issue of our work is that of speed-up versus quenching.  The first contribution concerning the behaviour of compactly supported initial data in reaction-diffusion equation of ignition (or bistable) type can be found in Kanel' \cite{KanelFinite}. For the one dimensional equation
$$\partial_t v - \partial_{xx}^2 v = f(v)$$
the author shows the existence of two thresholds $0<L_0\leq L_1<+\infty$ such that if $v_0 = \mathbf 1_{(-l,l)}$ with $l < L_0$, $v$ ends up below $\theta$ in finite time (and as a consequence, decays to $0$ uniformly)~: we call this situation \textit{quenching}. On the other hand, if $l > L_1$ it is shown that $v \to_{t\to+\infty} 1$ uniformly on compact sets. Zlato{\v{s}} \cite{ZlatosSharp} showed  $L_0 = L_1$, and more generally Du and Matano \cite{MatanoDu} showed the existence of such thresholds for  general one-parameter families of initial data.

For equations in cylinders and in the presence of a parallel shear flow,
$$\partial_t v + A\alpha(y)\partial_x v - \Delta v = f(v)$$
an important issue  is to understand how a large amplitude flow (i.e. $A>>1$) will enhance spreading.   This has been studied in various papers starting from \cite{ABP}, where it is showed in the case of a Fisher-KPP nonlinearity, a linear speed-up
 $$c_*(A) \underset{A \to +\infty}{\sim} kA.$$ 
In a more general setting, let us quote Constantin-Kiselev-Oberman-Ryzhik \cite{CKR}, who introduce the notion of \textit{bulk burning rate}. For ignition type nonlinearities, the same result holds as proved by Hamel and Zlatoš \cite{HZ} (see \cite{D2} for a comparison of their result with our situation). As for whether propagation or quenching holds, Constantin-Kiselev-Ryzhik \cite{ConsQuenchFlam} and Kiselev-Zlatoš \cite{KisZla} show that the price to pay for propagation (hence, speed-up)  
 also has a linear scaling in $A$: 
$$L_0 \underset{A \to +\infty}{\sim} k_0 A, \quad L_1 \underset{A \to +\infty}{\sim} k_1 A,
$$ provided that the flow is not constant on too large intervals. In other words, one trades a linear speed up of propagation for a linear growth in the critical size of initial data that leads to quenching. 

In the case of cellular flows, the same phenomenon happens but with a scaling in $A^{1/4}$ (up to a logarithmic factor): the speed-up property was proved by Novikov and Ryzhik \cite{NoviRyz} for the KPP case and more recently by Zlatoš \cite{ZlaCell} for combustion type nonlinearities. On the other hand, Fannjiang-Kiselev-Ryzhik \cite{Fan} proved (for flows with small enough cells) that if $L^4 \ln(L) < kA$ -- where $L$ represents the size of the square supporting the initial datum -- quenching happens. See also the numerical simulations of \cite{Vlad}.

A different type of mechanism is studied in Constantin-Roquejoffre-Ryzhik-Vladimora \cite{ConsRoq08} where the authors investigate a system coupling a reaction-diffusion equation and a Burgers equation. They show different quenching results with respect to a gravity parameter, one of them being that quenching happens independently on $l$ when the gravity is large enough.

In the light of this section, Theorem \ref{thmintropropsmall} may come up as a surprise since it shows a speed-up of the propagation ($c = c_\infty \sqrt D$) for free: $D$ does not appear in the threshold size of the initial data $v_0$. The trade-off is the presence of a "two-speed" mechanism: propagation first happens at a small speed that does not depend on $D$, but accelerates towards the full speed $c(D)$. On the other hand, if one tries to initiate the invasion only thanks to $\mu u_0 = \mathbf 1_{(-l,l)}$, Theorem \ref{thmintroproparoutel} shows that quenching happens if $l < a_0 D^{1/2}$ (from the point of view of  \eqref{normal}).

\subsection{Organisation of the paper}
Section \ref{flsection} is devoted to proving Theorem \ref{thmintropropafl} in the more precise form Theorem \ref{propafrontlike}, Section \ref{sectioncc} provides the details for the proof of Theorem \ref{thmintropropacc} by proving the detailed Theorem \ref{propacompact}. In section \ref{sectionsmallcc} we prove Theorem \ref{thmintropropsmall}, and we prove Theorem \ref{t2.1} in Section \ref{lower}. These two sections will describe more precisely the mechanism that is at work. Finally, the last section investigates the case of initial data supported on the road only.

\section{Front-like initial data}
\label{flsection}
Let us first trap the initial data between functions that will evolve in sub and super-solutions travelling at the right speed. 
\subsection{Trapping the initial data}
\label{sectiontrap}
We deal with bounded, uniformly continuous perturbations $(\rho_1,\rho_2)$ such that there  exist $C$~and~$\alpha_0>0$ such that $\rho_i(x) \leq Ce^{\alpha_0 x} $.
Then we assume
\begin{alignat}{1}
\label{flik1}
&0 \leq \mu u_0, v_0 \leq 1 \\ 
\label{flik2}
&(u_0,v_0) = (\phi(x+\xi),\psi(x+\xi)) + (\rho_1,\rho_2)
\end{alignat}
for some $(\rho_1,\rho_2)$ of the above form and some translation $\xi\in\mathbb R$. Such initial data is said to be in the class $\mathcal P_{\alpha_0}$. In this subsection, we prove that such initial data can be trapped between two translates of the travelling front, which is conceptually simple but necessary.  Due to the degeneracy of $f(v)$ as $v\leq \theta$, we will have to use the following weight function. Let $L_0 > 3$ and  
$$0 < \alpha < \min(\alpha_0, c).
$$ 
Define $\Gamma(x)$ to be a smooth non-decreasing function   such that 
\begin{equation}
\label{defgamma}
\Gamma(x) = \begin{cases}
1\text{ if }x>L_0 \\
e^{\alpha(x+L_0)}\text{ if }x<-L_0-1
\end{cases}\end{equation}
We also recall the exponential convergence towards $0$ or $1$ as $x\to\pm\infty$ proved in \cite{D1}, \cite{D2}: there exist $\lambda, \tilde\lambda > 0$ (bounded from below uniformly in $D>d$) and one can enlarge $L_0 > 0$ so that 
\begin{alignat}{3}
\nonumber &\forall x < -L_0/2 \qquad \mu \phi, \psi &&\leq \frac{\theta}{2} e^{\lambda\left(x+L_0/2\right)} &&\leq \frac{\theta}{2} \\ 
&\forall x > L_0/2 \qquad 1-\mu\phi, 1-\psi &&\leq \frac{1-\theta_1}{2} e^{-\tilde\lambda\left(x-L_0/2\right)} &&\leq \frac{1-\theta_1}{2} \label{expdecaych3}
\end{alignat}
where $\theta < \theta_1 < 1$ is chosen so that $-f'(s) \geq -f'(1)/2 =:\beta > 0$ when $s > \theta_1$.  That way, ahead of the front the system becomes linear and behind the front one controls the monotonicity of $f$. We now  assert the following:

\begin{prop}
\label{proptrapini}
Assume \eqref{flik1},\eqref{flik2}. Then for any $\varepsilon > 0$, there exist $\xi_0^- < 0$ and $\xi_0^+ > 0$ large enough such that 
\begin{alignat}{3}
\mu \phi(x+\xi_0^-) - \varepsilon\Gamma(x+\xi_0^-) &\leq &\mu u_0(x) &\leq \mu \phi(x+\xi_0^+) + \varepsilon \Gamma (x+\xi_0^+) \label{initrap}
 \\
 \psi(x+\xi_0^-,y) - \varepsilon\Gamma(x+\xi_0^-) &\leq &v_0(x,y) &\leq \psi(x+\xi_0^+,y) + \varepsilon \Gamma (x+\xi_0^+) \label{initrap2}
\end{alignat}

\end{prop}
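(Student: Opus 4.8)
The plan is to construct the two bounding translates separately for the lower and the upper inequality, treating the regions ahead of, around, and behind the front. First I would fix $\varepsilon > 0$ and use the hypotheses \eqref{flik1}--\eqref{flik2}: since $(u_0,v_0)$ differs from a translate of the front $(\phi,\psi)$ by a perturbation $(\rho_1,\rho_2)$ with $\rho_i(x) \leq C e^{\alpha_0 x}$, and since $0 < \alpha < \alpha_0$, the perturbation is dominated by $\varepsilon \Gamma(x+\xi)$ as soon as $x$ is sufficiently negative, because $\Gamma(x) = e^{\alpha(x+L_0)}$ there and $e^{\alpha_0 x}/e^{\alpha x}\to 0$ as $x\to-\infty$. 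So on a half-line $x < -R_\varepsilon$ (with $R_\varepsilon$ chosen so that $C e^{\alpha_0 x} \leq \varepsilon e^{\alpha(x+L_0)}$), the desired inequalities hold already with $\xi_0^\pm = \xi$, by translation-monotonicity of $\phi,\psi$ (larger shift increases the front) — actually even with no shift at all.

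Next I would handle the complementary region $x \geq -R_\varepsilon$. There the idea is purely geometric: $\Gamma$ is bounded below by $\varepsilon\Gamma(x+\xi_0^\pm)\geq \varepsilon e^{\alpha(-R_\varepsilon + \xi_0^- + L_0)}$ for the lower bound and $\geq\varepsilon e^{\alpha(\,\cdot\,)}$ for the upper, but more to the point $\Gamma \geq \min(1, e^{\alpha(\,\cdot\,)})$ so on any fixed half-line the term $\varepsilon\Gamma$ is bounded below by a positive constant depending on $\varepsilon$ and the shift. For the upper inequality I push $\xi_0^+ \to +\infty$: by \eqref{expdecaych3}, for $x + \xi_0^+ > L_0/2$ we have $\mu\phi(x+\xi_0^+), \psi(x+\xi_0^+,y) \geq 1 - \tfrac{1-\theta_1}{2}e^{-\tilde\lambda(x+\xi_0^+ - L_0/2)}$, so choosing $\xi_0^+$ large enough (depending on $\varepsilon$ and $R_\varepsilon$) makes the right-hand sides of \eqref{initrap}--\eqref{initrap2} exceed $1 \geq \mu u_0, v_0$ uniformly on $x \geq -R_\varepsilon$. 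Symmetrically, for the lower inequality I push $\xi_0^- \to -\infty$: then on the bounded-from-below region $x \geq -R_\varepsilon$ the shifted front $\mu\phi(x+\xi_0^-)$, $\psi(x+\xi_0^-,y)$ becomes uniformly small — by the first line of \eqref{expdecaych3}, once $x + \xi_0^- < -L_0/2$, it is $\leq \tfrac{\theta}{2}e^{\lambda(x+\xi_0^- + L_0/2)}$ — so that the subtracted $\varepsilon\Gamma(x+\xi_0^-) \geq 0$ guarantees the left-hand sides are $\leq$ the nonnegative $\mu u_0, v_0$. One must check the two regimes overlap, i.e. that the thresholds $-R_\varepsilon$ and $\pm L_0/2 - \xi_0^\pm$ can be ordered consistently; this is automatic once $|\xi_0^\pm|$ is taken large, since $R_\varepsilon$ is fixed once $\varepsilon$ is.

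The only slightly delicate point — and the one I would write out with care — is the matching on the \emph{lower} bound in the window $x \in [-R_\varepsilon, \text{(where the shifted front has decayed)}]$, because there $\mu\phi(x+\xi_0^-)$ is not yet negligible while $u_0$ could a priori be as small as $0$. The resolution is that $\varepsilon\Gamma(x+\xi_0^-) \to 0$ pointwise is exactly what is \emph{not} happening there — for $x+\xi_0^- < -L_0 - 1$ we have $\Gamma(x+\xi_0^-) = e^{\alpha(x+\xi_0^- + L_0)}$, which decays, but so does the front itself, and the front decays \emph{faster}: $\lambda \geq$ some uniform constant while we only need $\alpha < c \leq$ that rate, so $\mu\phi(x+\xi_0^-) - \varepsilon\Gamma(x+\xi_0^-) \leq \tfrac\theta2 e^{\lambda(\cdot)} - \varepsilon e^{\alpha(\cdot)}$, and for large negative argument the $\varepsilon e^{\alpha}$ term dominates, making the difference negative. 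So I would choose $\alpha$ strictly below $\lambda$ as well (this is why $\lambda$ being bounded below uniformly in $D$ matters), enlarging $L_0$ if needed as the statement permits. With that, the left-hand side of \eqref{initrap}--\eqref{initrap2} is $\leq 0 \leq \mu u_0, v_0$ on the entire region $x + \xi_0^- \leq -L_0 - 1$, and combined with the first step (which covers $x < -R_\varepsilon$ directly via the perturbation bound) and the pushing argument above, all of $\mathbb{R}$ is covered. The upper bound needs no such care since $\mu u_0, v_0 \leq 1$ is a hard ceiling that the large-$\xi_0^+$ translate clears everywhere except the far-left region already handled in step one. Finally I would note that all constants ($R_\varepsilon$, the required size of $|\xi_0^\pm|$) depend on the data only through $\alpha_0$, $C$ and $\varepsilon$, and crucially not on $D$, since $\lambda, \tilde\lambda, L_0$ are uniform in $D > d$ — consistent with the uniformity emphasized in the surrounding text.
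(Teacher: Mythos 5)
Your plan matches the paper's: trap $u_0,v_0$ between shifted fronts corrected by $\varepsilon\Gamma$, split into ahead/middle/behind regions, control the perturbation on the far left by $\varepsilon\Gamma$, and push $|\xi_0^\pm|$ so that the translated front clears the ceiling or floor. The upper bound is handled correctly — sending $\xi_0^+\to+\infty$ indeed forces $x+\xi_0^+>L_0$ uniformly on $x\geq -R_\varepsilon$, so $\Gamma\equiv1$ there and the right-hand side exceeds $1$. The lower-bound step, however, contains a genuine gap: you claim that sending $\xi_0^-\to-\infty$ makes $\mu\phi(x+\xi_0^-)$ uniformly small on the half-line $x\geq -R_\varepsilon$. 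This is false: that half-line is unbounded on the right, and for any fixed $\xi_0^-$ one has $\mu\phi(x+\xi_0^-)\to 1$ as $x\to+\infty$. The bound $\mu\phi(x+\xi_0^-)\leq\tfrac\theta2 e^{\lambda(x+\xi_0^-+L_0/2)}$ you invoke is only valid when $x+\xi_0^-<-L_0/2$, so the half-line $x\geq -\xi_0^--L_0/2$ — the middle and behind-front regions for the translate $\xi_0^-$ — is never addressed in your write-up. It needs its own argument, keyed to the floor rather than the ceiling: for $x+\xi_0^->L_0$ one has $\Gamma\equiv1$ and hence $\mu\phi(x+\xi_0^-)-\varepsilon\Gamma(x+\xi_0^-)\leq 1-\varepsilon$, while $\mu u_0(x)\to1$ as $x\to+\infty$, so the inequality holds once $|\xi_0^-|$ is large enough; the compact transition strip $x+\xi_0^-\in[-L_0-1,L_0]$ is handled the same way because $\Gamma\geq e^{-\alpha}$ there, so the left-hand side is bounded by $1-\varepsilon e^{-\alpha}<1$.

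A smaller secondary point: the assertion that the left-hand side is $\leq 0$ on \emph{all} of $x+\xi_0^-\leq -L_0-1$ only becomes true after enlarging $L_0$ with $\varepsilon$, which is undesirable because $L_0$ is a fixed length scale reused throughout Section 3 (it defines $\Gamma$, $\delta_{L_0+2}$, the constant $B$, $\gamma_0$, etc.). The paper keeps $L_0$ fixed and instead introduces an $\varepsilon$-dependent margin: it proves $\mu\phi(x+\xi_0^-)-\varepsilon\Gamma(x+\xi_0^-)\leq 0$ only on $x\leq -\xi_0^--L_0-B_\varepsilon$, and absorbs the leftover compact piece $x+\xi_0^-\in(-L_0-B_\varepsilon,L_0+1)$ into the same ``$\mu u_0\to1$'' argument by enlarging $|\xi_0^-|$ once more. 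With these two corrections your proof coincides with the paper's.
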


\begin{proof}
We only prove \eqref{initrap}. \eqref{initrap2} is obtained simultaneously with the same arguments ($y$-uniform limits, $y$-uniform exponential decay) by taking $|\xi_0^\pm|$ large enough. We start with the right inequality. Let $\varepsilon > 0$. Thanks to the uniform limit of $\phi$ as $x\to + \infty$, there exists $B_\varepsilon$ independent of $\xi_0^+$ such that for $x\geq -\xi_0^+ + L_0 + B_\varepsilon$, 
$$\mu \phi(x+\xi_0^+) + \varepsilon \Gamma(x+\xi_0^+) \geq 1-\varepsilon + \varepsilon = 1 \geq \mu  u_0(x).
$$
On the other hand, when $x \leq -\xi_0^+ - L_0 - 1$, $\mu  u_0(x) \leq Ce^{\alpha_0 x}$ so here for the inequality to be true, one needs
$\varepsilon e^{\alpha(x+\xi_0^+ + L_0)} \geq Ce^{\alpha_0 x} .$ 
But since $x+\xi_0^+ + L_0 < 0$ and $0 < \alpha < \alpha_0$, one just needs 
$\varepsilon e^{\alpha_0(x+\xi_0^+ + L_0)} \geq Ce^{\alpha_0 x} ,$ 
which is ensured as soon as $\xi_0^+ > \ln(C/\varepsilon) - L_0$.

Now only the compact region $x\in (-\xi_0^+ - L_0 - 1, -\xi_0^+ + L_0 + B_\varepsilon)$ remains. Observe that on this interval, $\mu u_0(x)$ goes uniformly to $0$ as $\xi_0^+ \to \infty$, whereas the right-hand side in \eqref{initrap} has a fixed positive infimum, so that the desired order is obtained by enlarging $\xi_0^+$.

For the existence of $\xi_0^-$: observe that on $x\geq -\xi_0^- + L_0 +1$
$$\mu  \phi(x+\xi_0^-) - \varepsilon \Gamma(x+\xi_0^-) \leq 1 - \varepsilon \leq \mu u_0(x)$$
provided $\xi_0^-$ is negative enough, thanks to the uniform limit of $u_0$ as $x\to +\infty$.
Now for the rest of the proof, we need on $x\leq -\xi_0^-  + L_0 + 1$
$$\varepsilon e^{\alpha(x+\xi_0^- + L_0)} \geq \mu \phi(x+\xi_0^-) - (\mu \phi(x) + \rho(x))$$
Because the exponential decay $\lambda$ of $\phi$ and $\psi$ satisfies $\lambda > c \geq \alpha$ (see \cite{D2}) this is true on $x\leq -\xi_0^- - L_0 - B_\varepsilon$ with $B_\varepsilon > 0$ large enough independent of $\xi_0^-$ so that here
$$\varepsilon e^{\alpha(x+\xi_0^- + L_0)} \geq \theta e^{\lambda(x+\xi_0^-)} \geq \mu \phi(x+\xi_0^-)$$
Again, we cover the compact region left around the interface by enlarging $-\xi_0^-$.
\end{proof}

\subsection{Wave-like sub and supersolution}
\label{wavelikes}
%
%
%
%
We adapt the original result of Fife-McLeod \cite{FML} using the simplified notations and generalisation of Mellet-Nolen-Ryzhik-Roquejoffre \cite{MNRR}. The adaptation is computationally non trivial, so let us first explain the changes that we expect to happen. Our objective is to build a supersolution $\overline u, \overline v$ to \eqref{readi} that is close to the front $(\phi,\psi)$ (in the frame moving at speed $c$). In the homogeneous case and for generalized transitions fronts the authors of \cite{MNRR} proposed 
$\bar v = \psi(x+c\xi(t)) + q(t)\Gamma(x+c\xi(t))$, where $\psi$ is the front, $\xi(t)$ is an increasing shift starting from the initial one and converging to some finite limit, $q(t) = \varepsilon e^{-\omega t}$ and $\Gamma$ is defined above: this is a necessary correction to take into account the initial perturbation and the degeneracy of $f$ on $v\leq \theta$.
In our case, we will  look for 
\begin{equation}
\label{sursolfrontlike}
\begin{cases}
\mu \bar u = \mu \phi(x+c\xi(t)) + q_u(t)\Gamma(x+c\xi(t)) \\
\bar v = \psi(x+c\xi(t),y) + q_v(t,y)\Gamma(x+c\xi(t))
\end{cases}
\end{equation}
and
\begin{equation}
\label{subsolfrontlike}
\begin{cases}
\mu \underline u = \mu \phi(x-c\xi(t)) - q_u(t)\Gamma(x-c\xi(t)) \\
\underline v = \psi(x-c\xi(t),y) - q_v(t,y)\Gamma(x-c\xi(t))
\end{cases}
\end{equation}
with $\xi$ starting from $\xi_0^\pm$. We will also use the fact that
$$\forall M > 0, \exists \delta_M > 0 \mid \partial_x \phi, \partial_x \psi > \delta_M\text{ when } x\in(-M,M).
$$
Now we reduce $\alpha$ a bit more and set 
$
\alpha = \min(\alpha_0, c/5),$
just so that the quantities 
$$
\alpha c - d/D \alpha^2 \geq \alpha c - \alpha^2 > \alpha c /4 - \alpha^2 > 0
$$
cannot be zero. These quantities will play an important role in the following computations. Observe that this condition on $\alpha$ means that the decay correction obtained through $\Gamma$ is limited: solutions starting from large perturbations (i.e. small $\alpha_0$) will be stabilized thanks to a correction with an $\alpha_0$ decay also, but solutions from very small perturbations (i.e. very large $\alpha_0$) will still need a $c/5$ correction in the decay at $-\infty$ to be stabilized.

Since we still want an exponential decay of $q_v(t)$ we look for 
\begin{equation}
\begin{cases}
\label{quqv}
q_u(t)=\varepsilon C e^{-\omega t} \\ 
q_v(t,y)= \varepsilon h(y)e^{-\omega t} 
\end{cases}
\end{equation}
with separate variables. The boundary conditions yield $h'(-L) = 0$ and $h'(0) + h(0) = C$ so that we have a large choice for $h$. Nonetheless, it will become clear in the following computations that a good candidate is
\begin{equation}
\label{defh}
C = \cosh\left(\sqrt{\kappa/d}L\right) + \sinh\left(\sqrt{\kappa/d}L\right), \quad
h(y) = \cosh\left(\sqrt{\kappa/d}\left(y+L\right)\right) 
\end{equation}
with 
$$
\kappa = \min(\beta/2, (\alpha c - d/D \alpha^2)/2) > 0,\quad
\omega = \min(G(\sqrt{\kappa/d}L), \beta/2, \text{Lip} f, \alpha c/4 - \alpha^2) > 0
$$
and $G(x) = \di\frac{\mu \tanh(x)}{1+\tanh(x)}$. The role of these conditions will be clear in the computations.

Observe that since $G'(0) > 0$, the decay exponent $\omega$ is then linearly small as $\beta$ or $\alpha_0$ or $\mu$ is small, but it should be noticed that it does not depend on $D \geq  d$, and that it depends on the initial data only through $\alpha_0$.
We can now state the following:
\begin{thm}
\label{propafrontlike}
Assume \eqref{flik1},\eqref{flik2} and let $u,v$ denote the associated solutions of \eqref{readi}. Let $\varepsilon_0 = \min(\theta/4, (1-\theta_1)/4, \gamma_0)$ where $$\gamma_0 = \frac{1}{4B},\quad B = \left(\frac{3\text{Lip} f + |\Gamma|_{C^2}}{c\delta_{L_0+2}}\right)C \max(1,1/\mu).$$
There exists a constant $K_0$ that depends on the initial data only through $\alpha_0$ and such that if $\varepsilon \in (0, \varepsilon_0)$, there exists $\xi_1^\pm$ with
\begin{equation}
\xi_1^+ \leq  \xi_0^+ + \varepsilon K_0, \quad \xi_1^- \geq  \xi_0^- - \varepsilon K_0
\end{equation}
and for all $t \geq 0$,
\begin{alignat}{2}
\label{trapu}
\phi(x+c\xi_1^-) - q_u(t)\Gamma(x+c\xi_1^-) &\leq \mu u(t,x-ct) &&\leq \mu \phi(x+c\xi_1^+) + q_u(t)\Gamma(x+c\xi_1^+) \\
\label{trapv}
\psi(x+c\xi_1^-) - q_v(t,y)\Gamma(x+c\xi_1^-) &\leq v(t,x-ct,y) &&\leq \psi(x+c\xi_1^+) + q_v(t,y)\Gamma(x+c\xi_1^+)
\end{alignat}
\end{thm}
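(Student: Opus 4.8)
The plan is to prove Theorem \ref{propafrontlike} by verifying that the functions defined in \eqref{sursolfrontlike}--\eqref{subsolfrontlike}, with the choices \eqref{quqv}--\eqref{defh}, are genuine super- and sub-solutions of the reaction-diffusion-exchange system \eqref{readi} in the moving frame, and then invoking the comparison principle (valid for \eqref{normal}/\eqref{readi} thanks to the monotone structure recalled from \cite{BRR, D1, D2}) against the trapped initial data produced by Proposition \ref{proptrapini}. By symmetry it suffices to handle the supersolution $(\bar u, \bar v)$; the subsolution is obtained by the reflection $x \mapsto -x$ and a sign change in $q$. Throughout, write $z = x + c\xi(t)$ for the moving variable, so that $\partial_t$ acting on $\phi(z)$ produces $c\dot\xi(t)\phi'(z)$, and recall that $(\phi,\psi)$ solves the travelling-wave system, so all the ``$\phi$-only'' and ``$\psi$-only'' terms cancel. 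The shift $\xi(t)$ is to be chosen as the solution of an ODE of the form $\dot\xi(t) = K_0 \omega \varepsilon e^{-\omega t}$ with $\xi(0) = \xi_0^\pm$, which is increasing, bounded, and satisfies the claimed bound $\xi_1^+ = \xi(\infty) \le \xi_0^+ + \varepsilon K_0$.

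The core is the pointwise differential inequality, which I would organize by the three spatial zones dictated by \eqref{defgamma} and \eqref{expdecaych3}. Ahead of the front ($z > L_0$), $\Gamma \equiv 1$, $f(\psi) = 0$ and the system is essentially linear; here the surplus terms are $-\omega q_u$, $-\omega q_v$ from the time derivative of $q$, plus the coupling terms, and positivity of the defect is guaranteed by $\omega \le G(\sqrt{\kappa/d}L)$ together with the boundary identities $h'(-L)=0$, $h'(0)+h(0)=C$ — this is exactly why $G(x) = \mu\tanh(x)/(1+\tanh(x))$ and the hyperbolic profile \eqref{defh} were chosen, since $h$ with eigenvalue $\kappa/d$ makes $-d h'' + \kappa h = 0$ and the Robin condition turns into the constraint encoding $G$. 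Far behind the front ($z < -L_0-1$), $\Gamma(z) = e^{\alpha(z+L_0)}$, and one must absorb the badly-signed part of $f(\psi + q\Gamma) - f(\psi) \approx f'(\psi) q\Gamma$; using the exponential decay \eqref{expdecaych3} and the monotonicity bound $-f'(s) \ge \beta$ for $s > \theta_1$ one gets a negative contribution, while the quantity $\alpha c - (d/D)\alpha^2 > \alpha c/4 - \alpha^2 > 0$ — kept strictly positive precisely by the choice $\alpha = \min(\alpha_0, c/5)$ — controls the transport/diffusion defect of $q\Gamma$; the constants $\kappa = \min(\beta/2, (\alpha c - (d/D)\alpha^2)/2)$ and $\omega \le \min(\beta/2, \alpha c/4 - \alpha^2)$ are tuned so the net sign is right, uniformly in $D > d$ (the only $D$-dependence, the term $(d/D)\partial_{xx}v$, is harmless since it only improves the inequality). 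In the bounded transition zone $|z| \le L_0 + B_\varepsilon$, $f$ and $\Gamma$ are merely Lipschitz/$C^2$ with no sign help, so one instead exploits the strict monotonicity $\partial_x\phi, \partial_x\psi > \delta_M$ there: the favorable term $c\dot\xi(t)\phi'(z) \ge c\dot\xi(t)\delta_{L_0+2}$ must dominate the error $O(\mathrm{Lip} f + |\Gamma|_{C^2}) q$, which forces $\dot\xi(t) \gtrsim B q(t)$, hence the definition of $B$ and the threshold $\varepsilon_0 = \min(\theta/4,(1-\theta_1)/4,\gamma_0)$ with $\gamma_0 = 1/(4B)$, ensuring $q_u, q_v$ stay small enough that $\mu\bar u, \bar v$ remain in the range where these estimates are valid (in particular below $\theta$ or above $\theta_1$ in the respective tail zones, with $\varepsilon < \theta/4$ and $< (1-\theta_1)/4$).

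Once the supersolution property holds for $t \ge 0$, I would finish by: (i) applying Proposition \ref{proptrapini} to get $\xi_0^\pm$ trapping $(u_0,v_0)$ at $t=0$, noting $q_u(0) = \varepsilon C \ge \varepsilon$ and $q_v(0,y) = \varepsilon h(y) \ge \varepsilon$ so the initial trapping \eqref{initrap}--\eqref{initrap2} (which used weight $\varepsilon\Gamma$) is implied by the trapping with weights $q_u(0)\Gamma, q_v(0,y)\Gamma$; (ii) invoking comparison for \eqref{readi} to propagate the inequalities to all $t > 0$, which yields \eqref{trapu}--\eqref{trapv} after translating back from the moving frame (the statement is written in the frame $u(t, x - ct)$); (iii) reading off $K_0$ from the ODE for $\xi$. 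The main obstacle I anticipate is the bookkeeping in the transition zone: one has to simultaneously control the size of $B_\varepsilon$ (which depends on $\varepsilon$ through the point where the tail estimates \eqref{expdecaych3} kick in, but must not blow up the Lipschitz constants encountered), keep $\delta_{L_0+2}$ — hence the ``$+2$'' margin around $L_0$ in the definition of $B$ — as the relevant modulus of monotonicity, and check that the sub- and supersolution don't cross, i.e. that the perturbed profiles stay ordered; all of this must be done with constants that are explicitly independent of $D > d$ and depend on the initial data only through $\alpha_0$, which is the whole point of re-deriving the Fife--McLeod/\cite{MNRR} construction here rather than citing it.
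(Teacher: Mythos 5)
Your overall plan matches the paper's proof almost exactly (super-/subsolution construction in the moving frame, three-zone verification, comparison principle against Proposition \ref{proptrapini}, shift ODE $\dot\xi = B\varepsilon e^{-\omega t}$ with $K_0 = B/\omega$), but there is one genuine mix-up that would derail the computation if carried out as sketched: you have \textbf{swapped the treatment of the nonlinearity in the two tail zones}. In the zone $z = x+c\xi(t) > L_0+1$, where $\Gamma\equiv 1$, the profile satisfies $\psi > (1+\theta_1)/2 > \theta$, so $f(\psi)$ is generically \emph{nonzero}; the system is \emph{not} linear there. What makes the estimate close is the decomposition $f(\psi) - f(\bar v)$ together with the monotonicity bound $-f'(s)\ge\beta$ for $s>\theta_1$, and the conditions $\kappa\le\beta/2$, $\omega\le\beta/2$ absorb the resulting term; the $G$-condition is needed in this zone but only for the first (linear, $u$) equation. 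Conversely, in the zone $z < -L_0-1$, where $\Gamma = e^{\alpha(z+L_0)}$, one has $\psi\le\theta/2$ and $\bar v\le 3\theta/4<\theta$, so $f(\psi)=f(\bar v)=0$ and the system really \emph{is} linear; the closing estimate there is $\alpha c - (d/D)\alpha^2 > 0$ together with $\omega \le \alpha c/4 - \alpha^2$, and there is no use of $-f'\ge\beta$ at all (indeed $f'=0$ where $\psi<\theta$). You have the transport/diffusion conditions correctly matched to the zones, but the $\psi$-regime and the $f$-handling are interchanged, which is not a labeling issue: as written, your sketch would try to invoke $-f'\ge\beta$ where $f'\equiv 0$, and would assert linearity where $f(\psi)\ne 0$.

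A secondary point: the middle region in this verification is of \emph{fixed} size $|z|<L_0+2$, independent of $\varepsilon$, and $\delta_{L_0+2}$ is the corresponding uniform lower bound on $\partial_x\phi,\partial_x\psi$. The $\varepsilon$-dependent width $B_\varepsilon$ you worry about belongs to the initial-trapping step (Proposition \ref{proptrapini}), not to the supersolution computation; the tail estimates \eqref{expdecaych3} kick in at the fixed threshold $L_0/2$ and introduce no $\varepsilon$-dependence into the transition zone. So the ``main obstacle'' you anticipate is not actually present once the two constructions are kept separate. Apart from these two points, the architecture (choice of $\kappa,\omega,B$, the reflection trick for the subsolution, and the subsolution-only modification $c\alpha(1-\dot\xi)$, which still uses $\omega\le c\alpha/4 - \alpha^2$ and $B\varepsilon\le 1/4$) is exactly that of the paper.
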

\begin{proof}
\begin{figure}[h!]
 \centering \def\svgwidth{500pt} 
 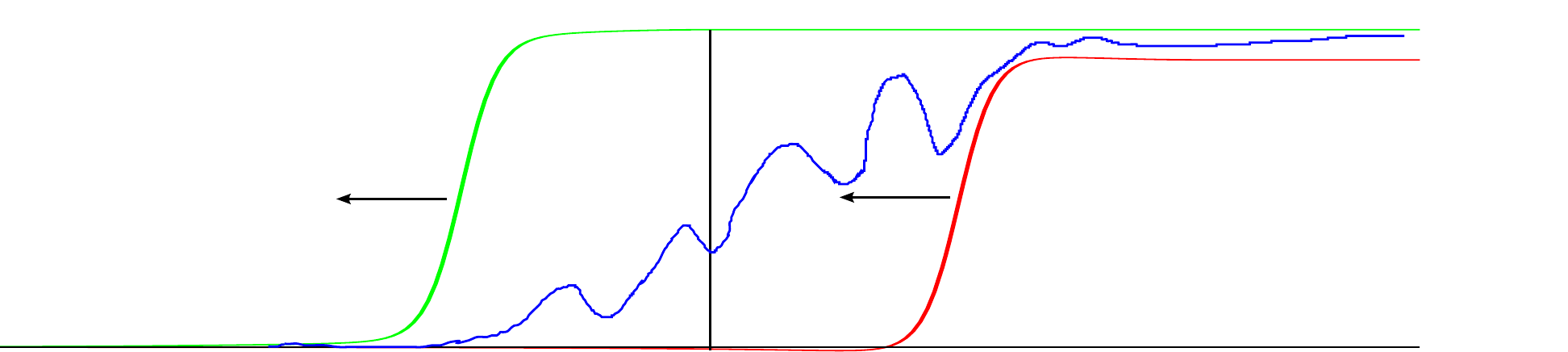 \caption{Trapping of the front-like data} 
 \label{figfrontlike}
\end{figure}

Inequations \eqref{trapu},\eqref{trapv} are set in the moving frame with variables $(t,x+ct)$. As a consequence, in the computations one has to replace $\partial_t$ by $\partial_t + c\partial_x$. We now want to show that $\bar u, \bar v$ as defined in \eqref{sursolfrontlike} yields indeed a supersolution:
$$\mathcal N\begin{pmatrix} \overline u \\ \overline v \end{pmatrix} \geq \begin{pmatrix} 0 \\ 0 \end{pmatrix}$$
where
$$\mathcal N\begin{pmatrix} u \\ v \end{pmatrix} = \begin{pmatrix} u_t - u_{xx} + cu_x + \mu u - v(\cdot,0) \\ v_t - \frac{d}{D} v_{xx} - dv_{yy} + cv_x - f(v)\end{pmatrix}$$
and that $\underline u,  \underline v$ as defined in \eqref{subsolfrontlike} yields a subsolution. Then \eqref{trapu},\eqref{trapv} will follow by an application of the comparison principle, Prop. \ref{proptrapini}, and the monotonicity of $\xi$. Indeed, this will show that in the original frame $u,v$ stays trapped between the fronts shifted initially by $\xi_0^\pm$ and moving at speed resp. $c(1\pm\dot\xi)$ (or speed $c\pm\dot\xi$ in the moving frame). This deformation becomes of course exponentially small over time due to the $e^{-\omega t}$ factor. Observe also that $\dot\xi \leq 1/4$ and is exponentially decaying over time, so $u, v$ will propagate at least and at most with speed $c+o(1)$.

We divide this computation in three zones concerning $x+\xi(t)$. In the following, $\phi$ and $\psi$ will always mean $\phi(x+c\xi(t))$ and $\psi(x+c\xi(t),y)$, $q_u$ will always mean $q_u(t)$, $q_v$ will either mean $q_v(t,0)$ or $q_v(t,y)$ and $\Gamma$ will always mean $\Gamma(x+c\xi(t))$, all of these functions being defined as above in \eqref{defgamma} and \eqref{quqv}-\eqref{defh}.

\subsubsection{Behind the front: $x+\xi(t) > L_0+1$}
Here $\Gamma \equiv 1$ and $\psi, \overline v \geq (1+\theta_1)/2$ so that 
\begin{alignat*}{1}
\mathcal N\begin{pmatrix} \overline u \\ \overline v \end{pmatrix}_{1} &= c\dot \xi \phi_x + \dot q_u/\mu -  \phi_{xx} + c \phi_x + \mu\phi + q_u - \psi - q_v(t,0) \\ 
&= c \dot\xi \phi_x + \dot q_u/\mu + q_u - q_v \\
&\geq \dot q_u/\mu + q_u - q_v \\
&= \varepsilon e^{-\omega t}\left(-Cw/\mu + C - \cosh\left(\sqrt{\kappa/d}L\right)\right) \\
&= \varepsilon e^{-\omega t}\left(-\left(\cosh\left(\sqrt{\kappa/d}L\right)+\sinh\left(\sqrt{\kappa/d}L\right)\right)w/\mu + \sinh\left(\sqrt{\kappa/d}L\right)\right) \geq 0
\end{alignat*}
The first inequality holds because we look for $\dot\xi \geq 0$ and the last because $\omega \leq G(\sqrt{\kappa}L)$. 
\begin{alignat*}{1}
\mathcal N\begin{pmatrix} \overline u \\ \overline v \end{pmatrix}_{2} &= c\dot \xi \psi_x + \dot q_v -  d/D \psi_{xx} + c \psi_x - d\psi_{yy} - d\partial_{yy}^2 q_v - f(\psi) + f(\psi) - f(\overline v) \\ 
&= c\dot \xi \psi_x + \dot q_v + f(\psi) - f(\overline v) - d\partial_{yy}^2 q_v \\
&\geq \dot q_v - d\partial_{yy}^2 q_v + \beta q_v = \varepsilon e^{-\omega t} h(y) (-\omega - \kappa + \beta)\geq \varepsilon e^{-\omega t} h(y) (-\omega + \beta/2) \geq 0.
\end{alignat*}
The last inequality holds because $w\leq \beta/2$ and the next to last because $\kappa \leq \beta/2$.

\subsubsection{Ahead of the front: $x + \xi(t) < -L_0 - 1$}
Heres $\Gamma(x+\xi(t)) = e^{\alpha(x+\xi(t)+L_0)}$, $\psi \leq \theta/2$ and $\bar v \leq \psi + \varepsilon \leq 3\theta/4 \leq \theta$ so $f(\overline v\equiv0$.

\begin{alignat*}{1}
\mathcal N\begin{pmatrix} \overline u \\ \overline v \end{pmatrix}_{1} &= c \dot\xi \phi_x + \left(\frac{\dot q_u}{\mu}  + \frac{q_u}{\mu} \alpha c\dot\xi - \frac{q_u}{\mu} \alpha^ 2 + c \frac{q_u}{\mu} \alpha + q_u  - q_v(\cdot,0) \right)e^{\alpha(x+\xi+L_0)}  \\
&\geq \frac{1}{\mu}\left( \dot q_u + q_u\alpha c \dot\xi - q_u\alpha^2 + c\alpha q_u\right)e^{\alpha(x+\xi+L_0)}  \geq \frac{1}{\mu}\left( -\omega + \alpha c - \alpha^2\right)e^{\alpha(x+\xi+L_0)}q_u \geq 0.
\end{alignat*}
The last inequality holds because $\omega \leq (\alpha c - \alpha^2)/2$, and the first because $q_u(t) \geq q_v(t,0)$.
\begin{alignat*}{1}
\mathcal N\begin{pmatrix} \overline u \\ \overline v \end{pmatrix}_{2} &= c\dot\xi \psi_x + e^{\alpha(x+\xi+L_0)}\left(\dot q_v + q_v\left(\alpha(c\dot \xi + c) - d/D \alpha^2 \right) - d\partial_{yy}^2 q_v\right) \\
&\geq e^{\alpha(x+\xi+L_0)}q_v \left( -\omega + \alpha c\dot \xi + \alpha c - d/D \alpha^2 - \kappa \right) \\
&\geq e^{\alpha(x+\xi+L_0)}q_v \left( -\omega + (\alpha c - d/D \alpha^2)/2 \right) \geq 0
\end{alignat*}
The last inequality holds because of the condition on $\omega$, and the next to last because of the condition on $\kappa$ and because $\dot\xi \geq 0$.

\subsubsection{The middle region: $|x+\xi(t)| < L_0 + 2$}
We have 
\label{middle}
\begin{alignat*}{1}
\mathcal N\begin{pmatrix} \overline u \\ \overline v \end{pmatrix}_{1} &= c \dot\xi \phi_x + \frac{\dot q_u}{\mu} \Gamma + c\dot\xi \frac{q_u}{\mu} \Gamma_x - \frac{q_u}{\mu} \Gamma_{xx} + c \frac{q_u}{\mu} \Gamma_x + (q_u - q_v)\Gamma \\
& \geq c\dot\xi \phi_x + \frac{\dot q_u}{\mu} \Gamma - \frac{q_u}{\mu} \Gamma_{xx}   \geq c\dot\xi \delta_{L_0+2} - (\omega+|\Gamma|_{C^2})\frac{q_u}{\mu} \geq 0,
\end{alignat*}
provided \begin{equation} \label{condxi1}\dot\xi \geq \frac{\omega+|\Gamma|_{C^2}}{c\delta_{L_0+2}} q_u.
\end{equation}
And we have
\begin{alignat*}{1}
\mathcal N\begin{pmatrix} \overline u \\ \overline v \end{pmatrix}_{2} &\geq c\dot\xi\psi_x - q_v \text{Lip} f + \dot q_v \Gamma + c\dot\xi q_v \Gamma_x - d/D q_v \Gamma_{xx} + c q_v\Gamma_x - d\partial_{yy}^2q_v\Gamma  \\
&\geq c\dot\xi \delta_{L_0+2} - q_v \text{Lip} f - \omega q_v - d/D |\Gamma|_{C^2} q_v - \kappa q_v \geq 0,
\end{alignat*}
provided  \begin{equation} \label{condxi2}\dot\xi \geq \frac{\text{Lip} f + \omega + d/D |\Gamma|_{C^2} + \kappa}{c\delta_{L_0+2}} q_v.
\end{equation}

We obtain conditions \eqref{condxi1}, \eqref{condxi2} by remarking that $\kappa \leq \beta < \text{Lip}(f)$, $\omega < \text{Lip}(f)$ and $d/D < 1$ and then we take 
$\dot \xi(t) = B \varepsilon e^{-\omega t},$ so that
$$
\xi(t) = \xi_0^+ + \frac{B\varepsilon(1-e^{-\omega t})}{\omega}, \quad K_0 = B/\omega
$$
answer our queries. One should observe that the condition $B \varepsilon \leq 1/4$ has not been used yet as well as $\omega \leq c\alpha/4 - \alpha^2$ rather than just $1/2(c\alpha - \alpha^2)$.
Observe that the computations concerning the subsolution \eqref{subsolfrontlike} with this time 
$$
\xi(t) = -\xi_0^- + \frac{B\varepsilon(1-e^{-\omega t})}{\omega}
$$
are exactly symmetric, except for a $c\alpha(1-\dot\xi)$ term (instead of $c\alpha(1+\dot\xi)$) that appears ahead of the front and in the middle region, which is treated thanks to the above still unused assumptions:
\begin{alignat*}{1}
-\omega + c\alpha(1-\dot\xi) - d/D \alpha^2 - \kappa &\geq -\omega + \frac{3c\alpha}{4} - d/D \alpha^2 - \kappa  \\ &\geq -\omega + \frac{3c\alpha}{4} - \alpha^2 - \kappa \\ 
&\geq -\omega + \frac{c\alpha}{4} - \frac{\alpha^2}{2} \\ 
&\geq -\omega + \frac{c\alpha}{4} - \alpha^2  \geq 0.
\end{alignat*}
On the other hand, we have
$$
-\omega + c\alpha(1-\dot\xi) - \alpha^2 \geq -\omega + \frac{3c\alpha}{4} - \alpha^2
\geq -\omega + \frac{c\alpha}{4} - \alpha^2 \geq 0.
$$
This ends the construction. \end{proof}

\section{Compactly supported initial data}
\label{sectioncc}
In this section, we go back in the fixed original frame. Seeing the problem in the light of \cite{FML}  it is natural to test: 
$$\begin{pmatrix} \underline u \\ \underline v \end{pmatrix} = \begin{pmatrix} \phi(x+ct+c\xi(t)) + \phi(-x+ct+c\xi(t)) - 1/\mu \\ \psi(x+ct+c\xi(t)) + \psi(-x+ct+c\xi(t)) - 1 \end{pmatrix}
$$
as a subsolution to \eqref{readi}, i.e. a pair of waves evolving in opposite directions. Of course, in light of the previous section, for this to be a subsolution one needs a well chosen correction in time and in space (in the degeneracy regime of $f$). Let us define the symmetrised fronts
$$
\tilde{\phi}(\cdot) = \phi(-\cdot),\quad \tilde{\psi}(\cdot) = \psi(-\cdot).
$$
In the sequel we will always use the following notations:
$$
\phi = \phi(x+ct+\xi_0 - c\xi(t)),\quad \tilde{\phi} = \tilde\phi(x-ct-\xi_0 + c\xi(t)),
$$
and the same will hold for $\psi, \tilde{\psi}, \Gamma, \tilde{\Gamma}$. 
Here $\xi_0$ will be a large initial shift  and $\xi(t)$ a time-increasing shift with $\xi(0) = 0$ and $c\xi(+\infty) \leq 1$,
which will be realised as a smallness condition on $\varepsilon_0$. In this section we set
\begin{equation}
\label{defalpha}
\alpha = \min(\lambda,\tilde\lambda,c/5)
\end{equation}
where $\lambda$ and $\tilde\lambda$ are already defined in \eqref{expdecaych3} so that $\alpha$ yields the same inequations as above and moreover $\alpha < \lambda, \tilde\lambda$.
$\Gamma$ is defined as above, only with a little more margin. Precisely let us set this time:

\begin{equation}
\Gamma(x) = \begin{cases}
1\text{ if }x>L_0-1 \\
e^{\alpha(x+L_0)}\text{ if }x<-L_0+1
\end{cases}\end{equation}

We will set the following:
$$
\underline u = \max\left(0,\phi + \tilde{\phi} - 1/\mu - q_u(t)/\mu\min(\Gamma, \tilde\Gamma)\right),\ 
\underline v = \max\left(0,\psi + \tilde\psi - 1 - q_v(t,y)\min(\Gamma, \tilde\Gamma)\right).
$$
The proof will consist in adapting the previous computations. We shall see that $(\underline u, \underline v)$ yields a subsolution provided only a size condition on the initial shift $\xi_0$ (independently of $D > d$). This condition is important, because then for the initial data to lie above $(\underline u(0), \underline v(0))$ it has to be large enough on a large enough interval. Moreover, we wish to insist on the fact that to retrieve the original model \eqref{normal} one has to change the variable $x \leftarrow x / \sqrt{D}$. As a consequence, when stated for \eqref{normal}, our result assumes that $u_0, v_0$ are large enough on an interval with length of order $\sqrt{D}$. Theorem \ref{thmintropropacc} will be proved as soon as we have proved the 
\begin{thm}
\label{propacompact}
1. There exist $\varepsilon_0 > 0$ small enough and two constants $B, \xi_0 > 0$ large enough such that for all $0<\varepsilon<\varepsilon_0$, there exist a small $\delta > 0$ and $M > 0$ such that if $0 \leq \mu u_0,v_0 \leq 1$ satisfy $\mu u_0,v_0 > 1-\delta$ on $x\in(-M,M)$, then 
$$\underline u = \max(0, \phi+\tilde\phi - 1/\mu - q_u/\mu\min(\Gamma,\tilde\Gamma)),\quad \underline v = \max(0, \psi+\tilde\psi - 1 - q_v\min(\Gamma,\tilde\Gamma))$$
where $q_u = \varepsilon C e^{-\omega t}$ and $q_v = \varepsilon h(y) e^{-\omega t}$ are defined as above and this time 
$\di  \xi(t) = \di\frac{B\varepsilon(1-e^{-\omega t})}{\omega}$  defines a subsolution to \eqref{readi} with initial data $u_0,v_0$ for all times. By the comparison principle, we then have at all times
$$\underline u \leq u, \underline v \leq v.
$$
As a consequence, \eqref{readi} propagates the initial data $u_0,v_0$ along the $x$-axis with speed at least as $c + \underset{t\to+\infty}{o}(1)$ in both directions. 

\noindent 2.Using the notations of Section 2 we have the following: let $\tilde u, \tilde v$ denote the same functions as in \eqref{sursolfrontlike} with $\phi,\psi$ and $\Gamma$ replaced by $\tilde\phi,\tilde\psi,\tilde\Gamma$. As a consequence, $\tilde u, \tilde v$ will be a supersolution for decreasing front-like initial data. Up to enlarging the initial shifts, we assert that 
$$(\min(\bar u,\tilde u), \min(\bar v,\tilde v))$$
is a supersolution to \eqref{readi} with initial data $u_0,v_0$ for all times. Again, this implies that 
$$u \leq \min(\bar u,\tilde u), v \leq \min(\bar v,\tilde v)$$
and so that the level lines of $u,v$ propagate at most as $c + \dot \xi = c + o(1)$ in both directions along the $x$-axis.
\end{thm}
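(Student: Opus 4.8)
The plan is to mimic the Fife--McLeod construction from Section \ref{wavelikes}, but now applied to the pair of counter-propagating fronts, carrying along the degeneracy-correcting weight $\min(\Gamma,\tilde\Gamma)$ and the time decay $q_u,q_v$ already built in \eqref{quqv}--\eqref{defh}. First I would verify that away from the matching region $\{x \approx 0\}$ — say for $x+ct \gg \xi_0$ where $\tilde\phi,\tilde\psi$ are exponentially small, and symmetrically for $-x+ct \gg \xi_0$ — the computation reduces, up to exponentially small error terms coming from the ``other'' front, to exactly the single-front subsolution estimate already proved in Theorem \ref{propafrontlike}; the choice $\alpha = \min(\lambda,\tilde\lambda,c/5)$ in \eqref{defalpha} is precisely what guarantees those cross terms ($\tilde\phi \sim e^{-\tilde\lambda(\cdot)}$ beaten by $\Gamma \sim e^{\alpha(\cdot)}$, and $f(\phi+\tilde\phi-1/\mu)$ controlled by the monotonicity of $f$ near $1$ on the set where both $\Gamma,\tilde\Gamma \equiv 1$) do not spoil the signs, and the same case split into ``behind / ahead / middle'' regions for each of the two fronts carries over. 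The point where $\min(\Gamma,\tilde\Gamma)$ switches from $\Gamma$ to $\tilde\Gamma$ needs a short argument: since $\Gamma,\tilde\Gamma$ are ordered the minimum is a viscosity-type infimum of two smooth supersolutions of the relevant linear inequalities, so $\underline u,\underline v$ remains a (generalized) subsolution across that corner, and the outer $\max(0,\cdot)$ is likewise harmless because $0$ is a subsolution and the maximum of two subsolutions is a subsolution.

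Next I would pin down the size condition on $\xi_0$. In the overlap region $x = O(1)$ both $\phi(x+ct+\xi_0-c\xi)$ and $\tilde\phi(x-ct-\xi_0+c\xi)$ are within $O(e^{-\lambda\xi_0})$ of $1/\mu$, so $\underline u = \phi+\tilde\phi-1/\mu - q_u/\mu$ is very close to $1/\mu$ there and $f$ is in its monotone regime; the only genuinely new terms in $\mathcal N(\underline u,\underline v)$ are the ``collision'' contributions $-\tilde\phi_{xx}$, $c\tilde\phi_x$ with the wrong sign relative to the single-front case, but these are $O(e^{-\lambda\xi_0})$ and hence absorbed by the strictly positive margin $q_v(\beta - \kappa - \omega)$ already generated in the ``behind'' computation, provided $\xi_0$ is chosen large enough (independently of $D>d$, since $\lambda,\tilde\lambda$ are bounded below uniformly in $D$). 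This is exactly what forces $\delta$ and $M$: for the initial datum to dominate $(\underline u(0),\underline v(0))$ one needs $\mu u_0,v_0 > 1-\delta$ on an interval containing $(-2c\xi_0,2c\xi_0)$, hence $M \asymp \xi_0$, and after undoing $x\leftarrow x/\sqrt D$ this becomes an interval of length $O(\sqrt D)$, matching the statement of Theorem \ref{thmintropropacc}. The choice $\dot\xi(t) = B\varepsilon e^{-\omega t}$ with $B$ large, $\varepsilon_0$ small so that $c\xi(+\infty) = cB\varepsilon/\omega \le 1$, reproduces conditions \eqref{condxi1}--\eqref{condxi2} verbatim in the middle (in the $x$-variable) and the ``behind'' regions of each front.

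For part 2, the supersolution, I would take $\min(\bar u,\tilde u)$, $\min(\bar v,\tilde v)$ with $\bar u,\bar v$ the front-like supersolution of \eqref{sursolfrontlike} (moving right) and $\tilde u,\tilde v$ its mirror image (moving left). Each of the two is a genuine supersolution of \eqref{readi} by Theorem \ref{propafrontlike} (and its reflection), and the minimum of two supersolutions is a supersolution in the generalized sense, so no new computation is needed beyond checking that at $t=0$, after enlarging the initial shifts $\xi_0^\pm$, we still have $\mu u_0 \le \mu\min(\bar u(0),\tilde u(0))$ and $v_0 \le \min(\bar v(0),\tilde v(0))$ — which follows from Proposition \ref{proptrapini} applied on each half-line, since a compactly supported $u_0$ is trivially front-like from either side. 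The comparison principle of \eqref{normal} (recalled before Theorem \ref{existence}) then gives $u\le\min(\bar u,\tilde u)$, $v\le\min(\bar v,\tilde v)$, and reading off the level sets of $\bar u,\bar v$ yields the upper propagation speed $c+\dot\xi = c+o(1)$.

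The main obstacle I anticipate is the matching region for the subsolution: making rigorous that the collision terms $-\tilde\phi_{xx}+c\tilde\phi_x$ (and their $v$-analogues, plus the cross term in $f(\phi+\tilde\phi-1)$) are dominated by the positive slack available from the $q$-terms \emph{uniformly in $D>d$}, which is where the uniform-in-$D$ lower bounds on $\lambda,\tilde\lambda$ from \cite{D1,D2} and the careful bookkeeping of the margins $\beta/2-\kappa$, $(\alpha c - \tfrac{d}{D}\alpha^2)/2 - \kappa$ are essential; everything else is a routine transcription of the Section \ref{wavelikes} computations with the second front added and the $\max(0,\cdot)$, $\min(\Gamma,\tilde\Gamma)$ handled by standard sub/supersolution-under-min/max lemmas.
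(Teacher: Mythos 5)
Your proposal follows essentially the same route as the paper: a Fife--McLeod construction with a pair of counter-propagating fronts corrected by $q\min(\Gamma,\tilde\Gamma)$, a zone-splitting computation organized around $x\pm ct\pm\xi_0$ with $\xi_0$ large so that the cross terms from the ``other'' front and the nonlinear mismatch $f(\psi)+f(\tilde\psi)-f(\underline v)$ are absorbed into the slack already available in the single-front estimates, and min-of-supersolutions plus trapping above compactly supported data for part 2. The only slight imprecision is the identification of the ``collision'' contributions as $-\tilde\phi_{xx}+c\tilde\phi_x$ --- these are annihilated by the travelling-wave equation for $\tilde\phi$; the genuinely new residuals are the $c\dot\xi$ shift terms (favourable in sign) and the cross term in $f$, which is exactly what the paper tracks.
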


\begin{rmq}
(i). As noticed above, observe that one needs to replace $M \leftarrow M \sqrt D$ when Theorem \ref{propacompact} is stated for the original system \eqref{normal}.

\noindent (ii). The size condition on $u_0,v_0$ is far from optimal and ensures only that $u_0 \geq \underline u(0), v_0 \geq \underline v(0)$. It could be sharpened by replacing $1-\delta$ with $\theta$ and by waiting long enough for the reaction to put $u, v$ above $1-\delta$.
\end{rmq}

\begin{figure}[h!]
 \centering \def\svgwidth{450pt} 
 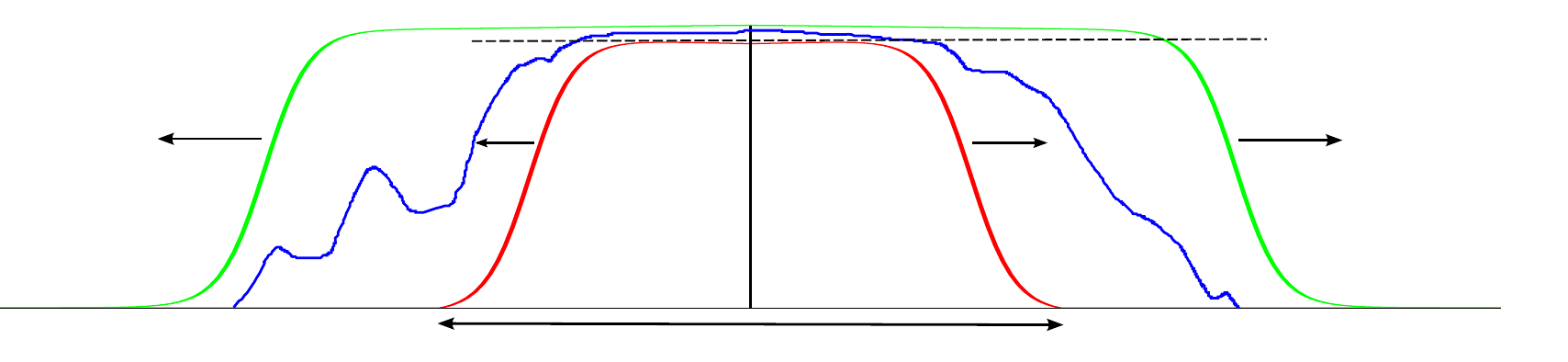 \caption{Trapping of the compactly supported data} 
 \label{figcomp}
\end{figure}

\begin{proof}
The second part of Theorem \ref{propacompact} is easy because the minimum of two supersolutions is a supersolution and any front like initial data can be translated above any compactly supported initial data. 

The first part is more intricate. Observe that $\underline u(0), \underline v(0)$ are zero except on a set of length $(-M,M)$ (with $M$ proportional to $\xi_0$) and that on $(-M,M)$ they are less than some $1-\delta$: this directly gives the largeness condition asked so that $u_0 \geq \underline u(0), v_0 \geq \underline v(0)$. We now detail the computation of $\mathcal N(\underline u, \underline v)$ in the following subsections by splitting the computations in three zones concerning $x+ct+\xi_0$.

\subsection{$x+ct+c\xi_0 < -L_0$}
In this zone, one has necessarily $x+ct+\xi_0 - \xi(t) < -L_0$ and also $x-ct-\xi_0 + \xi(t) < -L_0$ (by asking $2\xi_0 \geq 1$). As a consequence, in this zone we have $\mu\phi, \psi, \underline v \leq \theta/2$ and $\mu\tilde{\phi}, \tilde{\psi} \geq (1+\theta_1)/2$. Also $\min(\Gamma,\tilde{\Gamma}) \equiv \Gamma \equiv e^{\alpha(x+ct+\xi_0 - \xi(t) + L_0)}$ will be denoted $e^{\alpha(\cdots)}$ from now on. Then

\begin{alignat*}{1}
\mathcal N\begin{pmatrix} \underline u \\ \underline v \end{pmatrix}_{1} &= 
-c\dot\xi\phi_x + c\dot\xi\tilde\phi_x - \frac{\dot q_u}{\mu} \tilde\phi_x e^{\alpha(\cdots)} - \frac{q_u}{\mu} c\alpha e^{\alpha(\cdots)} + 
\frac{q_u}{\mu} c\alpha\dot\xi e^{\alpha(\cdots)} + \frac{q_u}{\mu} \alpha^2 e^{\alpha(\cdots)} \\ &- q_u e^{\alpha(\cdots)} + q_v e^{\alpha(\cdots)} \\
& \leq -\frac{q_u}{\mu}e^{\alpha(\cdots)}\left(-\omega+c\alpha(1-\dot\xi)-\alpha^2\right) + (q_v-q_u)e^{\alpha(\cdots)}.
\end{alignat*}
Both terms are already negative thanks to the conditions stated in Section 2. Then a computation similar to the preceding section -- thus not detailed here -- leads to
\begin{alignat*}{1}
\mathcal N\begin{pmatrix} \underline u \\ \underline v \end{pmatrix}_{2} \leq 
-q_ve^{\alpha(\cdots)}\left(-\omega+c\alpha(1-\dot\xi)-d/D \alpha^2 - \kappa\right) + f(\tilde\psi).
\end{alignat*}
This quantity can be made negative provided $\omega \leq 2\alpha c$ (which is already the case): indeed, using the exponential decay of $f(\tilde\psi)$ in this zone, the above expression can be factorized as 
$-q_v e^{\alpha(\cdots)} \times \left( \cdot \right)$
with $(\cdot)$ having the sign of $-\omega+c\alpha(1-\dot\xi)-d/D \alpha^2 - \kappa \geq 0$ provided only that $\xi_0$ is large enough (but depending on the initial data only through $\alpha_0$). 

\subsection{$x+ct+\xi_0 \in (-L_0,L_0)$}
\label{smallcc}

First, we ensure $c\xi \leq 1$ by asking that $c\dfrac{B\varepsilon}{\omega}\leq 1$ so by taking
$\varepsilon_0 \leq \di\frac{\omega}{cB}.$ As a consequence, $x+ct+\xi_0-c\xi(t) \in (-L_0-1,L_0)$ and $x-ct-\xi_0+c\xi(t) < -L_0$. Since $\omega < 2\alpha c$, the computations of section \ref{middle} still hold by enlarging the constant $B$ enough.

\subsection{$x+ct+\xi_0 > L_0$}
Here three subcases can appear concerning $x-ct-\xi_0$. By exchanging $\phi,\psi$ and $\tilde\phi, \tilde\psi$ and since $\alpha < \lambda$, the cases $x-ct-\xi_0 \in (-L_0,L_0)$ and $x-ct-\xi_0 > L_0$ are already covered by the computations above. Only the case $x-ct-\xi_0 < -L_0$ remains. In this zone, $x-ct-\xi_0+c\xi(t) < -L_0 + 1$, so here $\min(\Gamma,\tilde\Gamma) \equiv 1$ and both $\psi$ and $\tilde\psi$ are close to~$1$.

Observe that the computations of section 2 still hold by splitting this zone in two subzones: $x<0$ and $x>0$. In the first one, one will bound $f(\psi) + f(\tilde\psi) - f(\underline v)$ by $\text{Lip} f(1-\psi)-\beta(1-\tilde\psi + q_v)$ and in the second one by $\text{Lip} f(1-\tilde\psi) - \beta(1-\psi + q_v)$. Then, since $\omega < \min(\lambda,\tilde\lambda) c$ there holds
$$\mathcal N\begin{pmatrix} \overline u \\ \overline v \end{pmatrix}_{2} \leq -q_v\times(\cdot)$$
with $(\cdot)$ being positive provided $\xi_0$ is large enough. This proves Theorem \ref{propacompact}. 
\end{proof}

\section{Initial data with $O(1)$ compact support}
\label{sectionsmallcc}
We now go back to the original equation \eqref{normal} and state the following.
\begin{thm}
\label{propsmall}
Let $L$ be large enough (independently of $D$). There exist $M', \delta' > 0$ independent of $D > d$  such that if the initial data of \eqref{normal} satisfies $$v_0 > 1-\delta'\text{ for }x\in (-M',M')$$ then after a finite time $t_D = D^{1/2} h(D) + O(1)$ one has $\mu u$ and $v$ satisfying the assumptions of Theorem \ref{propacompact},  i.e. $\mu u, v \geq 1-\delta$ for $x\in(-M \sqrt D,M \sqrt D)$. As a consequence, starting from the time $t=t_D$, propagation occurs as described in Theorem \ref{propacompact}.
\end{thm}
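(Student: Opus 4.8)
\textbf{Proof plan for Theorem \ref{propsmall}.}

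The plan is to follow the scenario described after the numerical simulations: first show that $u$ spreads instantly on the whole line and decays, that $v$ grows slowly thanks to the reaction term being active on the $O(1)$-support where $v_0 > 1-\delta'$, and that mass is transferred from $v$ to $u$; then estimate the time needed for $\mu u$ and $v$ to be pushed above $1-\delta$ on a $\sqrt D$-sized interval, which turns out to be of order $\sqrt D$ up to a slowly growing factor. Because the statement is for \eqref{normal}, the natural strategy is to work in the rescaled variables \eqref{readi} (where $x \leftarrow x/\sqrt D$), so that ``an interval of size $M\sqrt D$ in $x$'' becomes ``an interval of size $M$ in the rescaled variable''; the small cost is the $\frac{d}{D}\partial_{xx}^2 v$ term in \eqref{readi}, which is a harmless perturbation for $D$ large and can be discarded in sub/super-solution arguments.

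The key steps, in order, are as follows. \emph{Step 1 (a stationary floor in the field).} Since $L$ is large, construct a stationary (or time-increasing) subsolution of the pure reaction-diffusion problem $\partial_t v - d\Delta v = f(v)$ in the strip with the homogeneous Neumann condition at $y=-L$ and a favourable condition at $y=0$, whose basin of attraction contains $v_0$; here one uses the ignition structure ($f\equiv 0$ on $[0,\theta]$, $f>0$ on $(\theta,1)$, $f'(1)<0$) and the fact that $v_0 > 1-\delta'$ on $(-M',M')$ with $\delta'$ small to guarantee that the reaction fires. The role of ``$L$ large'' is precisely to make such a subsolution exist independently of $D$: a thin strip would lose too much through the exchange boundary. \emph{Step 2 (invasion inside the field ignores $D$).} For the rescaled system \eqref{readi}, the coefficient $\frac{d}{D}$ in front of $\partial_{xx}^2 v$ is $\le 1$ and tends to $0$; dropping it only decreases the diffusion, so a subsolution of the $d/D = 0$ problem (a reaction-diffusion equation in $y$ alone, coupled to the road) is a subsolution of \eqref{readi}. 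This lets one propagate the ``$v \ge 1-\delta$'' region along the rescaled $x$-axis at a speed that is $O(1)$, hence to cover an interval of rescaled length $M$ in time $O(M) = O(1)$ \emph{once $v$ has been lifted above $\theta$ uniformly on a fixed interval}. \emph{Step 3 (the slow lift: where the $\sqrt D$ comes from).} The bottleneck is the transfer $v \to u$: in the original variables, $u$ diffuses at rate $D$, so the mass handed by $v$ to $u$ through the Robin condition is spread over a length $\sim \sqrt{Dt}$ and the pointwise value of $u$ is correspondingly small, of order $t/\sqrt{Dt} = \sqrt{t/D}$ by a heat-kernel / Duhamel estimate on the $u$-equation $u_t - Du_{xx} + \mu u = v(t,x,0)$. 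For $\mu u$ to reach the order-one level $1-\delta$ one therefore needs $t$ of order $D$ — but this is an \emph{overestimate}; a sharper bookkeeping, using that $v$ itself only reaches level $\sim 1$ on a bounded $x$-interval and feeds $u$ there, gives that $\mu u$ reaches $1-\delta$ on the $\sqrt D$-interval after time $t_D$ of order $\sqrt D$ times any function $h(D)\to\infty$ (the $h(D)$ absorbs logarithmic losses and the $O(1)$ handles the final $O(1)$-time propagation of Step 2). Concretely: choose $t_1 = O(1)$ so that $v \ge 1-\delta$ on a fixed $x$-interval (Step 1); then on the time interval $[t_1, t_1 + D^{1/2}h(D)]$ use the explicit solution of the linear $u$-equation with source bounded below by $c_0 \mathbf 1_{\{v\ge 1-\delta\}}$ to show $u$ becomes $\ge (1-\delta)/\mu$ on $(-M\sqrt D, M\sqrt D)$; finally a comparison argument using Step 2's subsolution promotes this to $v \ge 1-\delta$ on the same interval after an extra $O(1)$ time. \emph{Step 4 (conclusion).} At $t = t_D$ the pair $(\mu u, v)$ satisfies the hypotheses of Theorem \ref{propacompact} (after undoing the rescaling, so the interval has length $M\sqrt D$ in the original $x$), and that theorem gives the claimed propagation.

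The main obstacle is \emph{Step 3}: making the heat-kernel estimate for $u$ precise enough to pin down the time scale as $\sqrt D$ rather than $D$, and in particular matching it with the companion lower bound of Theorem \ref{t2.1} (the exponent $1/7$ there, versus $1/2$ here, is exactly the gap the authors flag as open). The delicate point is that $v$ and $u$ are coupled, so one cannot simply treat the source in the $u$-equation as given: one must run a bootstrap where a crude lower bound on $u$ feeds back into the $v$-equation via the Robin condition $d\partial_y v + v = \mu u$, raising $v$, which in turn raises the source for $u$; controlling this feedback loop quantitatively, uniformly in $D$, while keeping all constants $D$-independent, is the crux of the argument. The remaining steps are comparison-principle bookkeeping of the kind already carried out in Sections \ref{flsection} and \ref{sectioncc}, together with the explicit one-dimensional reaction-diffusion facts recalled from \cite{KanelFinite, ZlatosSharp, MatanoDu}.
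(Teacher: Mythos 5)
Your three-step skeleton superficially resembles the paper's, but Steps~2 and~3 contain genuine errors of mechanism, and the key idea that produces the $\sqrt D$ time scale is missing. Step~1 (a stationary profile in the $y$-direction using the Robin-homogeneous boundary and $L$ large) is indeed the paper's Lemma~\ref{stat}. Step~2, however, asserts that dropping the $\frac{d}{D}\partial^2_{xx}v$ term in \eqref{readi} yields a subsolution that propagates the $\{v\geq 1-\delta\}$ region ``along the rescaled $x$-axis at a speed that is $O(1)$''. This cannot be right: if it were, the solution would satisfy the hypotheses of Theorem~\ref{propacompact} after an $O(1)$ time, which directly contradicts Theorem~\ref{t2.1}. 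The ignition wave for the field problem \eqref{robinv} has a $D$-independent speed $c_p$ in the \emph{original} $x$-variable, hence speed $c_p/\sqrt D\to 0$ in the rescaled variable; covering a rescaled interval $(-M,M)$ therefore requires a time of order $M\sqrt D/c_p$ --- this, and not any heat-kernel smearing, is where the $\sqrt D$ comes from. (Also note that discarding the $d/D\partial^2_{xx}$ term preserves the subsolution property only when $\partial^2_{xx}\underline v\geq 0$, e.g.\ for $x$-independent profiles --- which then do not propagate in $x$ at all.)

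Step~3 is where the proposal actually fails. You argue that with the source for $u$ bounded below by $c_0\mathbf 1_{\{v\geq 1-\delta\}}$ on a \emph{fixed} $x$-interval, $\mu u$ will reach $1-\delta$ on $(-M\sqrt D,M\sqrt D)$ after time $\sqrt D\,h(D)$. This is false as stated: the stationary state of $u_t - D u_{xx} + \mu u = c_0\mathbf 1_{(-L_0,L_0)}$ is bounded above by $c_0/\mu$ and decays like $e^{-\sqrt{\mu/D}\,|x|}$ outside $(-L_0,L_0)$; at $x=M\sqrt D$ this gives $\mu u \leq c_0 e^{-M\sqrt\mu}$, strictly below $1-\delta$, \emph{no matter how long you wait}. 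The ``sharper bookkeeping'' you allude to cannot repair this; the source itself must spread horizontally to length $\sim M\sqrt D$ before $u$ can follow, and the paper's Lemma~\ref{lowerboundbottom} is exactly this: using only $u\geq 0$ (so that $v$ dominates the Robin-homogeneous subsolution of \eqref{robinv}), the bottom boundary value satisfies $v(t,x,-L)\geq(1-\delta'')\varphi_t(x) - Ce^{-bt}$ with $\varphi_t\equiv 1$ on $|x|\leq \tfrac{c_p}{2}t$. After $T_D=\sqrt D\,h(D)$, this interval becomes $(-\tfrac{c_p}{2}h(D), \tfrac{c_p}{2}h(D))$ in rescaled $x$, which is the reason for the factor $h(D)\to\infty$: it makes the bottom boundary data effectively constant in the limit. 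The paper then uses a compactness/contradiction argument (Cases~1 and~2, with delicate $D$-uniform parabolic estimates since the $x$-diffusion in the field degenerates) to pass to a limiting system \eqref{uvinf} whose bottom boundary is $\equiv 1-\delta''$, and the maximum principle forces $\mu u_\infty\equiv v_\infty\equiv 1-\delta''$. None of this structure appears in your Step~3, and the heuristic you propose is not only incomplete but quantitatively incorrect.
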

We will divide the proof in several steps:

\noindent{\it Step 1.} Since $D > d$ is ought to be large, $u$ should be very small for small times. Thus we first investigate the equation for $v$ in \eqref{normal} where $u$ is replaced by $0$, and we not only expect to use its solution as a subsolution but we really expect that it will reflect the dynamics of the full solution for some time:
\begin{equation}
\label{robinv}
  \begin{tikzpicture}
  \draw (-6,0) -- (6,0) node[pos=0.5,below] {\small{$d\partial_y \underline v + \underline v = 0$}} node[pos=0.5,above] {};

  \node at (0,-1.25) {$\partial_t \underline v - d\Delta \underline v   = f(\underline v)$};

  \draw (-6,-2.5) -- (6,-2.5) node[pos=0.5,above] {\small{$\partial_y \underline v= 0$}};


  \end{tikzpicture}
\end{equation}

\noindent{\it Step 2.}  Let $p(y)$ be the largest $y$-dependent steady solution to \eqref{robinv}.The travelling wave for \eqref{robinv} connecting $0$ and $p(y)$ will serve to build a subsolution for \eqref{robinv} propagating just as in Theorem \ref{propacompact} but here at speed $c_p = O(1)$. This will give a lower bound on the boundary data $v(x,-L) \geq \underline v(x,-L) \geq \cdots $. This will be the purpose of Lemma \ref{lowerboundbottom}.

\noindent{\it Step 3.} Using this lower bound, we then go back to \eqref{readi}: we show that even without the reaction term, this lower bound suffices to have $\mu u, v \geq 1-\delta$ on $(-M,M)$ within a finite time $t_D$. As a consequence, this is the case also for the nonlinear problem. This will be proved in a final step. Observe that here we use $f\geq 0$. If we were looking for instance at a bistable nonlinearity this would still be true but we would need to add a positive zero-order term in these computations.

We recall the following elementary fact, that we will freely use in the sequel.
\begin{lem}
\label{stat}
There exists $L_0 > 0$ such that if $L > L_0$, there exists a solution $p(y)$ to 
\begin{subequations}
\begin{numcases}{}
-dp'' = f(p)\label{eqp} \\
p'(-L) = 0 \label{pbas} \\
dp'(0) + p(0) = 0 \label{phaut}
\end{numcases}
\end{subequations}
with $p > 0$ concave decreasing and $p(-L) = 1-\delta'' > 1-\delta$. Moreover $\delta'' \to 0$ as $L\to +\infty$.
\end{lem}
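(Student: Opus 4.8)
\textbf{Proof plan for Lemma \ref{stat}.}

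The plan is to construct $p$ as the limit of an increasing iteration scheme, or equivalently by a shooting/variational argument, and then read off the quantitative properties from the shape of $f$. First I would look for $p$ as a concave decreasing solution, which forces $p'' \leq 0$, i.e. $f(p) \geq 0$, i.e. $p \geq \theta$ everywhere — so the natural ansatz is that $p$ ranges in $[\theta, 1)$. I would integrate \eqref{eqp} once: multiplying by $p'$ and integrating from $y$ to $0$, using \eqref{phaut} to express $p'(0) = -p(0)/d$, gives the first integral $\frac{d}{2}\bigl(p'(y)\bigr)^2 = \frac{1}{2d}p(0)^2 + F(p(0)) - F(p(y))$ where $F' = f$, $F(\theta)=0$. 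Imposing \eqref{pbas} then yields the scalar equation $\frac{1}{2d}p(0)^2 = F(p(-L)) - F(p(0))$ relating the two endpoint values, and $L$ is recovered by quadrature: $L = \int_{p(0)}^{p(-L)} \frac{dp}{\sqrt{(2/d)\bigl[\frac{1}{2d}p(0)^2 + F(p(0)) - F(s)\bigr]}}$ — wait, more cleanly, with $m := p(0)$, $M := p(-L)$ linked by $\tfrac{1}{2d}m^2 = F(M)-F(m)$, one has $L = \sqrt{d/2}\int_m^M \frac{ds}{\sqrt{F(M)-F(s)}}$.

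Next I would analyze this as $m$ ranges over $(0, 1)$. As $m \to 0^+$ one should have $M \to$ the value $M_0 \in (\theta,1)$ with $F(M_0) = 0$ (if such exists — note $F$ is increasing on $(\theta,1)$ from $0$, so $F > 0$ there and in fact $M_0$ must be interpreted as $\theta$ in the degenerate limit; more carefully, as $m\to 0$, $F(M) \to 0$ forces $M \to \theta$, and the integral $\int_\theta^\theta$ degenerates). The key monotonicity/continuity fact is that the right-hand side $L(m)$ is continuous in $m$ and can be made as large as desired: as $m$ increases the lower limit rises but the integrand develops a near-singularity that I expect to make $L(m) \to +\infty$ in an appropriate regime; in any case what I actually need is just that the range of $L(\cdot)$ contains an interval $(L_0, +\infty)$, so that for every $L > L_0$ there is an admissible $m$, hence a solution $p$. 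Concavity is automatic since $p \in [\theta,1]$ gives $f(p)\geq 0$; strict decrease follows because $p'(0) = -p(0)/d < 0$ and $p'$ is monotone.

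Finally, for the quantitative claims: positivity is clear since $p \geq m > 0$; $p(-L) = M > \theta$ is the maximum. To get $p(-L) = 1-\delta'' \to 1$ as $L \to \infty$, I would argue that forcing $L$ large forces the orbit to spend a long "time" near a zero of the effective potential, and the only way the quadrature $L = \sqrt{d/2}\int_m^M (F(M)-F(s))^{-1/2}ds$ can blow up (given $f$ is bounded and $f>0$ on $(\theta,1)$ so $F(M)-F(s)$ is bounded below away from $s=M$ unless $M$ itself approaches a zero of $F'=f$) is to have $M \to 1$, since $f(1)=0$ creates the integrable-but-large singularity at the upper endpoint; equivalently one uses $m \to 0$ together with the constraint. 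A clean way: pick the branch with $m$ small; then $M$ is determined by $F(M) = \tfrac{1}{2d}m^2 + F(m)$, small, so $M$ close to $\theta$ — that gives small $L$, not large. So the correct branch for large $L$ is $m$ bounded away from $0$ and $M \to 1$; then $\delta'' = 1 - M \to 0$. The condition $1-\delta'' > 1-\delta$, i.e. $\delta'' < \delta$, is then just a matter of taking $L_0$ large enough that the corresponding $M > 1-\delta$.

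\textbf{Main obstacle.} The delicate point is not existence — that is a standard shooting argument — but pinning down that the \emph{largest} steady state has $p(-L) \to 1$ as $L\to\infty$ with an explicit enough handle to guarantee $p(-L) > 1-\delta$; this requires carefully identifying which branch of the quadrature relation gives arbitrarily large $L$ and showing it is the one approaching $1$, using $f(1)=0$, $f'(1)<0$ to control the rate. I expect the argument to mirror the classical analysis of the Robin boundary value problem for ignition nonlinearities, and the monotone iteration $p_{n+1}$ solving $-dp_{n+1}'' = f(p_n)$ started from $p_0 \equiv 1$ (a supersolution, since $f(1)=0$) gives both existence of the maximal solution and, by comparison, the needed lower bound once $L$ is large.
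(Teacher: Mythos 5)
Your overall approach — derive the first integral by multiplying by $p'$, express $L$ by quadrature as $L=\sqrt{d/2}\int_m^M ds/\sqrt{F(M)-F(s)}$ with the compatibility relation $F(M)-F(m)=m^2/(2d)$, and produce the solution with $M=p(-L)$ close to $1$ — is exactly the elementary ODE argument one wants here; the paper gives no proof of this lemma, calling it an elementary fact, so there is nothing to compare it against beyond noting that your method is the standard one.

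One step of your branch analysis is wrong, though, and the wording should be fixed even if the conclusion survives. You claim that taking $m$ small gives small $L$; it does not. If $m<\theta$ then $F(m)=0$ and $F(s)=0$ for all $s\in[m,\theta]$, so the compatibility relation forces $F(M)=m^2/(2d)$ (hence $M\to\theta^+$), and the quadrature already contains the contribution $\sqrt{d/2}\int_m^\theta ds/\sqrt{F(M)}=d(\theta-m)/m$, which blows up as $m\to0^+$. Geometrically, on the long stretch where $p\leq\theta$ the solution is affine with tiny slope $-m/d$, and that is where the length accumulates. So $L$ blows up at \emph{both} ends of the admissible $M$-interval, $M\to\theta^+$ (via $m\to0^+$) and $M\to1^-$; for large $L$ there are at least two solutions, a small one with maximum near $\theta$ and a large one with maximum near $1$, and the assertion that ``the only way the quadrature can blow up is $M\to1$'' is false. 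Since the lemma asks only for existence of the large one, this does not sink the proof: your closing paragraph (monotone iteration from $p_0\equiv1$, dominating the explicit subsolution from the $M\to1^-$ shooting branch) already selects the right solution. But you should delete the claim about the small-$m$ branch and instead argue directly: $L(M)$ is continuous and finite on $(\theta,1)$, and $L(M)\to+\infty$ as $M\to1^-$ because $f(1)=0,\ f'(1)<0$ give $F(M)-F(s)\sim\tfrac{|f'(1)|}{2}\bigl[(1-s)^2-(1-M)^2\bigr]$ near $s=M$, producing a logarithmic divergence of the quadrature; then the intermediate value theorem yields, for all $L$ beyond some $L_0$, an $M(L)\to1$ realising that $L$, with $\delta''=1-M(L)\to0$.
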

Let us now prove the following:
\begin{lem}
\label{lowerboundbottom}
Let $v$ be a solution of \eqref{robinv}. There exist $\delta', M' > 0$ independent of $D$ such that if $v_0 > 1-\delta'$ for $x\in (-M',M')$, there holds $$ v(t,x,-L) \geq (1-\delta'')\varphi_t(x) - C e^{-bt}$$
where $C,b > 0$ do not depend on $D$ and $(\varphi_t)$ is bounded in $\mathcal C^3$ such that $\varphi_t(x) = 1$ for $|x| \leq \frac{c_p}{2} t$ and $\varphi_t(x) = 0$ for $|x| \geq c_p t$ for some speed $c_p > 0$ independent of $D$.
\end{lem}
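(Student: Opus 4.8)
The plan is to reduce the full Robin problem \eqref{robinv} to the scalar reaction--diffusion equation on the line $\{y=-L\}$ that governs the trace $v(t,x,-L)$, and then to exploit the one-dimensional ignition theory of Kanel'--Fife--McLeod to produce an expanding plateau. First I would establish that, after a bounded time $t_0 = O(1)$, the solution of \eqref{robinv} started from $v_0 > 1-\delta'$ on $(-M',M')$ satisfies $v(t_0,\cdot,\cdot) \geq \underline w_0$, where $\underline w_0$ is a fixed smooth $y$-independent ``bump'' exceeding $\theta$ on an interval $(-M'',M'')$ chosen large enough for the one-dimensional ignition threshold to be crossed: this is just the local-in-time parabolic estimate for \eqref{robinv} (the Robin and Neumann boundary data only help, since $f\geq 0$ and since $\delta'$ is small), and it is here that one uses $L$ large via Lemma \ref{stat} to guarantee that $p(y)$, and hence the relevant travelling wave, is close to $1$ — in particular $p(-L) = 1-\delta'' > 1-\delta$.

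Next I would construct a compactly-supported-type subsolution for \eqref{robinv} in the spirit of Theorem \ref{propacompact}, but now for the genuinely $O(1)$ problem \eqref{robinv} rather than the renormalised system. Let $(\Phi_p(x+c_p t,y), \text{ with connection } 0 \rightsquigarrow p(y))$ be the travelling wave for \eqref{robinv} at speed $c_p > 0$ (its existence, with $c_p$ independent of $D$ since \eqref{robinv} does not see $D$ at all, follows from the same ignition-wave arguments used in \cite{D1,D2}, or from the classical Berestycki--Nirenberg theory for a single equation in a strip). Then $\underline v(t,x,y) = \max\bigl(0,\ \Phi_p(x+c_p t + c_p\zeta(t),y) + \Phi_p(-x+c_p t + c_p\zeta(t),y) - p(y) - \tilde q(t,y)\min(\Gamma,\tilde\Gamma)\bigr)$, with the same kind of exponentially decaying correction $\tilde q$ and logarithmically-growing shift $\zeta(t)$ as before, is a subsolution of \eqref{robinv} once $\zeta(0)$ is large enough; the verification is line-for-line the computation in the proof of Theorem \ref{propacompact} with $D=\infty$ (so the $d/D$ terms vanish and everything is strictly easier). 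Since $v(t_0,\cdot,\cdot) \geq \underline w_0 \geq \underline v(t_0,\cdot,\cdot)$ for a suitable shift, the comparison principle for \eqref{robinv} gives $v(t,x,y) \geq \underline v(t,x,y)$ for all $t \geq t_0$.

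Finally I would read off the trace. Evaluating the subsolution at $y=-L$: for $|x| \leq \frac{c_p}{2}t$ both travelling-wave profiles are within the plateau, so $\underline v(t,x,-L) \geq 2p(-L) - p(-L) - \tilde q(t,-L) = (1-\delta'') - C e^{-\omega t}$, while for $|x| \geq c_p t$ (minus a bounded shift) the profiles have decayed and $\underline v(t,x,-L) = 0$. Setting $\varphi_t(x)$ to be a fixed $\mathcal C^3$-bounded interpolation — e.g. $\varphi_t(x) = \chi\bigl((|x| - \tfrac{c_p}{2}t)/(\tfrac{c_p}{2}t)\bigr)$ for a fixed cutoff $\chi$ equal to $1$ near $0$ and $0$ past $1$ — one gets $v(t,x,-L) \geq (1-\delta'')\varphi_t(x) - Ce^{-bt}$ with $b=\omega$ and $C$, $b$, $c_p$, $\delta''$, $M'$, $\delta'$ all independent of $D$ because \eqref{robinv} itself is $D$-independent. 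Absorbing the bounded time $t_0$ into the constant $C$ completes the proof.

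The main obstacle is the first step: producing, from the crude hypothesis ``$v_0 > 1-\delta'$ on a fixed interval'' and in bounded time, a configuration that provably sits above the chosen travelling-wave-pair subsolution near $y=-L$. One must propagate the initial mass down through the strip (a distance $L$, which is large but fixed), losing almost nothing, and then check that the resulting profile dominates $\underline v(t_0,\cdot,\cdot)$ uniformly in the — irrelevant but present — parameter $D$; this is where choosing $L$ large (so $\delta''$ is small and there is room to spare between $1-\delta''$ and $\theta$) and $M'$ large (so the one-dimensional ignition threshold $L_1$ of Kanel' is comfortably exceeded) is essential. Everything downstream is a routine transcription of the computations already carried out for Theorem \ref{propacompact}.
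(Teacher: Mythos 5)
Your Steps 2--4 (constructing the pair-of-waves subsolution $\psi+\tilde\psi-p-q_v\min(\Gamma,\tilde\Gamma)$ from the ignition travelling wave of \eqref{robinv}, applying the comparison principle, and reading off the trace at $y=-L$) match the paper's argument exactly; the paper also cites Berestycki--Nirenberg \cite{BN92} for the existence of that wave. However, the step you single out as ``the main obstacle'' is a misreading of the hypothesis and is, in fact, unnecessary. The assumption $v_0 > 1-\delta'$ on $(-M',M')$ is for all $y\in(-L,0)$ (this is consistent with Theorem \ref{propsmall} and with the numerics, where $v_0 = \mathbf 1_{(-3,3)}(x)$ is constant in $y$). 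Since the subsolution at $t=0$ satisfies $\underline v(0,x,y)\le p(y)\le p(-L)=1-\delta''$ and is supported in a fixed interval of size $\sim 2\xi_0 + 2L_0$, one simply needs $\delta'\le\delta''$ and $M'$ large enough (both independent of $D$) to get $v_0 \ge \underline v(0,\cdot,\cdot)$ pointwise and invoke the comparison principle from time $0$. There is no need to ``propagate mass down through the strip in bounded time.''

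Incidentally, the intermediate reduction you propose would not be as simple as you claim: a $y$-independent bump above $\theta$ cannot be expected to dominate from below after a bounded time, because the Robin boundary at $y=0$ leaks mass and the relevant steady state $p(y)$ can fall below $\theta$ near $y=0$; that is precisely why the paper's subsolution is built with the $y$-dependent profile $p(y)$ rather than with a flat level. Two small further points: the shift $\zeta(t)$ in the Fife--McLeod-type construction is bounded (it is $B\varepsilon(1-e^{-\omega t})/\omega$), not logarithmically growing as you write; and the paper in fact notes one can take the simpler $q_v=\varepsilon e^{-\omega t}$ here (no $h(y)$ factor is needed) because \eqref{robinv} is a single equation with no road coupling.
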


\begin{proof}
First, that there exists a travelling wave solution with speed $c_p > 0$ independent of $D$ of \eqref{robinv} connecting $0$ and $p(y)$ has to be established: for this we refer to Berestycki-Nirenberg \cite{BN92} which gives the existence of an increasing (in $x$) travelling front $\psi(x,y)$ with exponential convergence towards $0$ and $p(y)$ as $x\to\pm\infty$.

Now we notice that the subsolution argument in Theorem \ref{propacompact} can be used but in a simpler fashion for the Robin homogeneous boundary value problem \eqref{robinv}: one the one hand, the structure of the problem is simpler than the one studied in Theorem \ref{propacompact} since here we deal with a single equation, the original construction of \cite{FML} with $q_v = \varepsilon e^{-\omega t}$ will suffice. On the other hand, $1$ is not a steady state for the problem so one has to replace $1$ by $p(y)$ in the computations. Nonetheless, one can check that the above computations still hold with the adequate subsolution 
$$\psi + \tilde \psi - p - q_v\min(\Gamma,\tilde\Gamma)$$
As a consequence, just as in Theorem \ref{propacompact}, provided $v_0$ is above an initial shift of a pair of waves -- hence the existence of $\delta'$ and $M'$ -- its level lines will be pushed by below by the pair of waves travelling as $\pm c_p t \mp O(1)$. This implies the desired bound.
\end{proof}

\begin{proof}[End of the proof of Theorem \ref{propsmall}]
Let $(u^D,v^D)$ be the solution of \eqref{readi} starting from compactly supported $ 0 \leq \mu u_0, v_0 \leq 1$ and let $v_0$ satisfy the rescaled assumptions of Theorem \ref{propsmall}. First, let $h(D)$ be any positive function such that $h(D)$ grows to infinity as $D\to\infty$, and set $T_D = D^{1/2} h(D)$. We now show the following

\begin{equation}
\label{but}
\underset{t\to +\infty}{\liminf} \underset{D > d}{\inf}\  \underset{(x,y) \in \overline{\Omega_{L,M}}}{\min} \{ \mu u^D(T_D + t,x), v^D(T_D+t,x,y) \} \geq 1 - \delta'' > 1 - \delta
\end{equation}
where $\Omega_{L,M} = (-M,M)\times (-L,0)$. First, it is an easy but tedious exercise to see that the left hand-side of \eqref{but} can be characterised as the limit as $n\to +\infty$ of some $\mu u^{D_n}(T_{D_n} + t_n, x_n)$ or $v^{D_n}(T_{D_n} + t_n, x_n, y_n)$ where $t_n \to +\infty$, $D_n > d$, $(x_n,y_n) \in \overline{\Omega_{L,M}}$. We then extract from $(t_n, D_n, x_n, y_n)$ a subsequence so that $x_n \to x_\infty$ and $y_n \to y_\infty$. Our objective is to extract from $(u,v)$ a subsequence converging to some limiting $(u_\infty, v_\infty)$ to which the maximum principle will apply and force the above limit to be $\geq 1-\delta''$. The difficulty comes from the fact that $(D_n)$ might be unbounded and so that standard parabolic estimates and the usual maximum principle might fall at the limit. Two cases can appear:

\noindent{\it Case 1. $(D_n)$ is unbounded.} Then we extract again so that $D_n \to +\infty$. Let 
$$
u_n(t,x):= u^{D_n}(t_{D_n} + t_n + t, x_\infty + x),\quad
v_n(t,x,y):= v^{D_n}(t_{D_n} + t_n + t, x_\infty + x,y).
$$
Since $f\geq 0$ and by Lemma \ref{lowerboundbottom} above, by the comparison principle we have $(u_n, v_n) \geq (\underline u_n, \underline v_n)$ the solution of 
\begin{equation}
\label{uvn}
  \begin{tikzpicture}
  \draw (-6,0) -- (6,0) node[pos=0.5,below] {\small{$d\partial_y \underline v_n = \mu \underline u_n- \underline v_n$}} node[pos=0.5,above] {$\partial_t \underline u_n - \partial_{xx}^2 \underline u_n = \underline v_n - \mu \underline u_n$};

  \node at (0,-1.25) {$\partial_t \underline v_n - \frac{d}{D_n} \partial_{xx}^2 \underline v_n - d\partial_{yy}^2 v_n  = 0$};

  \draw (-6,-2.5) -- (6,-2.5) node[pos=0.5,above] {\small{$\underline v_n = (1-\delta'')\varphi_{T_{D_n} + t_n}(x_\infty + x) - C e^{-b(t_n + t)} $}};


  \end{tikzpicture}
\end{equation}
Since $d/D_n \to 0$, the standard parabolic estimates applied on $v_n$ will fall concerning the $x$-derivatives. We overcome this difficulty since equation \eqref{uvn} is linear and the boundary data $\underline v_n(t,x,-L)$ is bounded in $\mathcal C^3$: the maximum principle applied on $x$-derivatives of $(u_n,v_n)$ up to order $3$ gives that they are all bounded independently of $n$: $$|\partial^2_{xx} \underline u_n|_\infty, |\partial^3_{xxx} \underline u_n|_\infty, |\partial^2_{xx} \underline v_n|_\infty, |\partial^3_{xxx} \underline v_n|_\infty \leq C_1$$

Now concerning the $y$-derivatives, even though $d/D_n \to 0$ the standard estimates hold: indeed since $\underline v_n\leq 1$, standard $L^p$ parabolic estimates with $p$ large enough applied on $\underline u_n$ give that $\underline u_n$ is bounded in  $\mathcal C^{\alpha,1+\alpha}$ by some $C_2$. Now rescale by $x \leftarrow  x\sqrt{D_n}$ so that $|\underline u_n(t,\frac{x}{\sqrt D_n})|_{\mathcal C^{\alpha, 1+\alpha}} \leq C_3$ (the semi-norms of the derivatives even go to zero since $1/D_n \to 0$). Moreover, under this rescaling $-d/D_n \partial^2_{xx} - d\partial^2_{yy}$ becomes $-d\Delta$ so that standard parabolic estimates up to the Robin boundary apply and give that $|\underline v_n(t,\frac{x}{\sqrt D_n},y)|_{\mathcal C^{1+\alpha/2,2+\alpha}} \leq C_4$. Since this rescaling does not impact $\partial_y$ or $\partial_t$, this gives 
$$|\partial_t \underline v_n|_{\alpha/2}, |\partial_{y} \underline v_n|_\alpha, |\partial^2_{yy} \underline v_n|_\alpha \leq C_4$$

The bound on $\partial^2_{xy} \underline v_n$ follows also by combining the two arguments above, and finally by plugging the estimate on $v$ in the equation for $u$, standard Schauder estimates yield that $\underline u_n$ is bounded in $\mathcal C^{1+\alpha/2,2+\alpha}$. In the end one can extract from $(\underline u_n,\underline v_n)$ some subsequence converging in $\mathcal C^{1,2}_{loc}$ to some $(u_\infty, v_\infty)$ global in time (since $t_n \to +\infty$) solving

\begin{equation}
\label{uvinf}
  \begin{tikzpicture}
  \draw (-6,0) -- (6,0) node[pos=0.5,below] {\small{$d\partial_y v_\infty = \mu  u_\infty - v_\infty$}} node[pos=0.5,above] {$\partial_t u_\infty - \partial_{xx}^2 u_\infty = v_\infty- \mu u_\infty$};

  \node at (0,-1.25) {$\partial_t v_\infty - d\partial_{yy}^2 v_\infty  = 0$};

  \draw (-6,-2.5) -- (6,-2.5) node[pos=0.5,above] {\small{$v_\infty = (1-\delta'') $}};


  \end{tikzpicture}
\end{equation}
Indeed, $v_\infty(t,x,-L) \equiv 1-\delta''$ since $$1 - \delta'' \geq \underline v_n(t,x,-L) \geq 1 - \delta'' - Ce^{-b(t_{D_n} + t_n+t)}$$ for $x \in (-\frac{c_p}{2} h(D_n), \frac{c_p}{2} h(D_n))$ by Lemma \ref{lowerboundbottom} above and by use of $T_{D_n}$.

Since $(u_\infty, v_\infty)$ are global in time, there is no initial data anymore and the maximum principle applies to give 
$$\mu u_\infty, v_\infty \equiv 1-\delta''.
$$
Indeed, no value different than $1-\delta''$ can be reached, because then $(u,v)$ would have an infimum smaller or a supremum larger than $1-\delta''$. By translating over time (which is possible since the solution is global) this infimum or supremum would become a minimum or maximum, that cannot be reached by $u$ because of the strong parabolic maximum principle, and neither by $v$ by the strong parabolic maximum principle and Hopf's lemma applied on the suitable $y$-slice.

\noindent{\it Case 2. $(D_n)$ is bounded.} Then one extracts so that $D_n \to D_\infty > d$ and the above proof is much simpler since standard regularity and maximum principle apply. Moreover $T_D$ is not necessary.

In both cases, the $\liminf$ above is $\geq u_\infty(0,0) = 1 - \delta''$ or $\geq v_\infty(0,0,y_\infty) = 1-\delta'' $, thus $\eqref{but}$ holds. Theorem \ref{propsmall} follows easily: indeed, there exists $t_1$ independent of $D$ such that after $t_D = T_D + t_1$, $\mu u, v > 1-\delta$ on $(-M,M)$.  \end{proof}

\section{Lower bound on the waiting time}\label{lower}
In this subsection, $(u,v)$ denotes the solution of \eqref{readi}. Let $\varepsilon:= D^{-1/2}$ and $v^0$  solve
\begin{equation}
\label{v0}
  \begin{tikzpicture}
  \draw (-6,0) -- (6,0) node[pos=0.5,below] {\small{$d\partial_y v^0 + v^0 = 0$}} node[pos=0.5,above] {};

  \node at (0,-1.25) {$\partial_t v^0 - d\varepsilon^2 \partial^2_{xx} v^0 - d\partial^2_{yy} v^0   = f(v^0)$};

  \draw (-6,-2.5) -- (6,-2.5) node[pos=0.5,above] {\small{$\partial_y v^0= 0$}};

  \end{tikzpicture}
\end{equation}
sharing the same boundary data as $v$: $v^0(0) = v_0$. Observe that $v^0$ is the rescaling of the subsolution $\underline v$ already introduced in equation \eqref{robinv}. The aim of this subsection is to give an estimate on the time during which $v$ is close to $v^0$. More precisely we will show the following

\begin{thm}
\label{borneinftemps}
Let $\alpha\in(0,1)$ and define
$$T_{\alpha, \varepsilon}:= \sup \{\ T > 0 \ \mid \ |v- v^0| < \varepsilon^\alpha \text{ for all } 0<t<T\}.$$  Then for all $0 < \delta < \min\left(\alpha,\frac{2}{7},\frac{2}{5}(1-\alpha)\right)$ one has $$\left(\frac{1}{\varepsilon}\right)^\delta =  \underset{\varepsilon \to 0}{o}(T_{\alpha, \varepsilon})$$
\end{thm}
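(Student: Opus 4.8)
The plan is to estimate the difference $w:=v-v^0$ by a Duhamel/comparison argument, tracking how much the coupling with $u$ through the Robin condition can perturb $v^0$ over a time window of length $T$. The only discrepancy between the equations satisfied by $v$ and $v^0$ is the boundary data on $\{y=0\}$: $v$ satisfies $d\partial_y v + v = \mu u$ while $v^0$ satisfies $d\partial_y v^0 + v^0 = 0$. Hence $w$ solves a linear parabolic equation (with the extra reaction term $f(v)-f(v^0)$, which is Lipschitz-controlled by $w$ itself and can be absorbed by Gronwall) with homogeneous interior data and inhomogeneous Robin data equal to $\mu u(t,x)$. So the size of $w$ is driven entirely by the size of $u$ on the road.

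First I would bound $u$. In the rescaled equation \eqref{readi}, $u$ satisfies $\partial_t u - \partial_{xx}^2 u = v - \mu u$ with $0\le v\le 1$, starting from a compactly supported datum; so $\mu u$ is bounded above by the solution of $\partial_t \bar u - \partial_{xx}^2 \bar u = 1-\mu\bar u$ with the same datum, giving $\|u(t,\cdot)\|_\infty \le C\min(1,t^{1/2}/\sqrt{D}\,)\cdot(\text{something})$ — more precisely, since the source term $v-\mu u$ is $O(1)$ but $v$ itself is only as large as what $v^0$ has produced, and $v^0$ (being the solution of \eqref{robinv} rescaled, i.e. essentially a $1$D-in-$y$ problem fed by a road-less Robin condition) grows only algebraically in $t$ near the road region whose $x$-extent is $O(\sqrt D)$ in unscaled variables, i.e. $O(1)$ in rescaled variables. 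The key quantitative input is: as long as $t < T_{\alpha,\varepsilon}$, we have $|v-v^0|<\varepsilon^\alpha$, hence $v$ on the road is controlled by $v^0$ on the road plus $\varepsilon^\alpha$, and $v^0$ on the road stays small (below $\theta$, or at least $O(1)$ but with small spatial footprint) for the relevant times. Feeding this into the $u$-equation and using the heat-kernel smoothing in $x$, one gets $\|u(t,\cdot)\|_\infty \lesssim$ (polynomial in $t$) $\times\, \varepsilon$ for a range of $t$, because the mass that $v$ can hand to $u$ is limited and $u$ then spreads out over an $x$-interval of width $\sim \sqrt t$.

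Then I would close the loop: plugging the bound $\mu u \lesssim \varepsilon\, t^{p}$ (some power $p$) into the Robin-boundary Duhamel formula for $w$, and using parabolic boundary regularity for the half-strip operator $\partial_t - d\varepsilon^2\partial_{xx}^2 - d\partial_{yy}^2$ with Robin condition, one obtains $\|w(t,\cdot,\cdot)\|_\infty \lesssim \varepsilon\, t^{q}$ for an explicit $q$. Demanding that this stay below $\varepsilon^\alpha$ forces $\varepsilon\, t^{q} < \varepsilon^\alpha$, i.e. $t < \varepsilon^{-(1-\alpha)/q}$; combined with the constraint coming from requiring $v^0$ itself to remain in the regime where the above estimates are valid (this is where the $2/7$ and $2/5(1-\alpha)$ thresholds enter — the $2/7$ presumably from balancing the spreading rate $\sqrt t$ of $u$ against the $\sqrt D=\varepsilon^{-1}$ spatial scale and the algebraic growth of $v^0$), one concludes $T_{\alpha,\varepsilon}\gtrsim \varepsilon^{-\delta}$ for every $\delta<\min(\alpha,2/7,\tfrac25(1-\alpha))$, which is exactly the claim $(1/\varepsilon)^\delta = o(T_{\alpha,\varepsilon})$.

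\textbf{Main obstacle.} The delicate point is the interaction between the two scalings: the $x$-diffusion in the $v$-equation is $d\varepsilon^2\partial_{xx}^2$, which is degenerate as $\varepsilon\to0$, so one cannot use standard parabolic $x$-regularity for $v$ uniformly; one must instead rescale $x\leftarrow x/\varepsilon$ (undoing the renormalisation) to recover the uniformly parabolic operator $-d\Delta$, at the cost of the road datum $\mu u$ acquiring an $x$-footprint of width $O(1/\varepsilon)$ and of having to re-express everything back. Tracking how the spreading of $u$ (rate $\sqrt t$ in rescaled $x$) competes with the $1/\varepsilon$-wide strip, together with the precise algebraic growth rate of $v^0$ toward the bottom boundary $y=-L$ given by Lemma \ref{stat} and Lemma \ref{lowerboundbottom}, is what produces the three competing exponents in the $\min$, and getting all three simultaneously with the right constants is the technical heart of the argument. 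A secondary nuisance is that $f$ is only Lipschitz (ignition type), so the reaction difference $f(v)-f(v^0)$ must be handled by Gronwall rather than by any smoothing, which is routine but must be done uniformly in $\varepsilon$.
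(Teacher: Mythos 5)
Your overall architecture—view $w=v-v^0$ as driven by the Robin boundary data $\mu u$, estimate $u$ via the heat kernel on the road, then close the loop—is indeed the skeleton of the paper's argument. But there is a genuine gap in your treatment of the reaction term, and it is precisely where the technical content of the theorem lies.

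You propose to absorb $f(v)-f(v^0)$ ``by Gronwall'' and call this routine. Gronwall produces a factor $e^{\mathrm{Lip}(f)\,t}$, and you want to run the estimate up to $t\sim\varepsilon^{-\delta}$; the factor is then $e^{\mathrm{Lip}(f)\,\varepsilon^{-\delta}}$, which swamps any negative power of $\varepsilon$. A uniform Gronwall therefore cannot keep $|w|$ below $\varepsilon^\alpha$ on the required time window; it only buys you $t=O(1)$, whereas the whole point is to reach $t\sim\varepsilon^{-\delta}$.

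The paper avoids this by two structural moves you did not invoke. First, it splits $w=w^1+\bar w$, where $w^1$ carries the reaction term $f'(\cdot)w$ with \emph{homogeneous} Robin data and $\bar w$ carries the inhomogeneous Robin data $\mu u$ with \emph{no} reaction. Second—and this is the decisive step—it uses that $f\equiv 0$ on $[0,\theta]$: since $v\le v^0+\varepsilon^\alpha$ and $v^0$ is the rescaled ignition solution of \eqref{robinv}, the coefficient $f'(\cdot)$ is supported in an $x$-interval of width only $O(\varepsilon t)$. Feeding this into Duhamel yields an $L^1$ (not $L^\infty$) bound on $w^1$ that is \emph{polynomial} in $t$, roughly $\varepsilon^{\alpha+1}t$, instead of exponential; this $L^1$ control is what plugs into the appendix heat-kernel estimate to bound $u$, hence $\bar w$. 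Returning to an $L^\infty$ bound on $w^1$ is a separate difficulty (because of the degenerate $d\varepsilon^2\partial_{xx}^2$ diffusion you correctly flagged), and the paper handles it by constructing an explicit supersolution built from shifted wave derivatives $\partial_x\psi$, $\partial_x\tilde\psi$ and an exponential corrector. The competing exponent constraints $\nu<1-\tfrac{3}{2}\delta-\alpha$ and $\nu>\delta$, which combine to give $\delta<\tfrac{2}{5}(1-\alpha)$, arise in that construction, not from the balance of ``$\sqrt t$ spreading of $u$ against $\sqrt D$'' that you guessed. In short: the missing ideas are the decomposition $w=w^1+\bar w$, the use of the spatial localization of $f'$ (rather than its Lipschitz norm), and the supersolution argument upgrading the $L^1$ bound on $w^1$ to $L^\infty$. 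Without them the Gronwall route stalls at $t=O(1)$.
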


\begin{rmq}
The limiting case is $\delta = \alpha = 2/7$. This theorem implies Theorem \ref{t2.1}.
\end{rmq}

Let us define $w = v-v^0$. Observe that $(u,w)$ solves 
\begin{equation}
\label{w}
  \begin{tikzpicture}
  \draw (-6,0) -- (6,0) node[pos=0.5,below] {\small{$d\partial_y w + w = \mu u$}} node[pos=0.5,above] {$\partial_t u - \partial^2_{xx} u + \mu u - w = v^0$};

  \node at (0,-1.25) {$\partial_t w - d\varepsilon^2 \partial^2_{xx} w - d\partial^2_{yy} w   = f'(\cdot) w$};

  \draw (-6,-2.5) -- (6,-2.5) node[pos=0.5,above] {\small{$\partial_y w= 0$}};

  \end{tikzpicture}
\end{equation}
where $\cdot \in [v,v^0] \cup [v^0,v]$ (by Taylor's formula). The idea of the proof is to decouple equation \eqref{w} by decomposing $w$ in two parts. Let us set $w=w^1+\overline w$ where 
\begin{equation}
\label{w1}
  \begin{tikzpicture}
  \draw (-6,0) -- (6,0) node[pos=0.5,below] {\small{$d\partial_y w^1 + w^1 = 0$}} node[pos=0.5,above] {};

  \node at (0,-1.25) {$\partial_t w^1 - d\varepsilon^2 \partial^2_{xx} w^1 - d\partial^2_{yy} w^1   = f'(\cdot) w$};

  \draw (-6,-2.5) -- (6,-2.5) node[pos=0.5,above] {\small{$\partial_y w^1= 0$}};

  \end{tikzpicture}
\end{equation}
and 
\begin{equation}
\label{wbar}
  \begin{tikzpicture}
  \draw (-6,0) -- (6,0) node[pos=0.5,below] {\small{$d\partial_y \overline w + \overline  w = \mu u$}} node[pos=0.5,above] {$\partial_t u - \partial^2_{xx} u + \mu u - \overline w = w^1 + v^0$};

  \node at (0,-1.25) {$\partial_t \overline w - d\varepsilon^2 \partial^2_{xx} \overline w - d\partial^2_{yy} \overline w   = 0$};

  \draw (-6,-2.5) -- (6,-2.5) node[pos=0.5,above] {\small{$\partial_y \overline w= 0$}};

  \end{tikzpicture}
\end{equation}
Observe that $T_{\varepsilon,\alpha}$ exists by continuity. We now work by contradiction to show that if $T_{\alpha, \varepsilon} = (1/\varepsilon)^\delta$, then $|w|$ stays of order less than $\varepsilon^{\alpha'}$ with $\alpha' > \alpha$. During the rest of the proof, this will be abbreviated with "$\ll \varepsilon^\alpha$".

The scheme is as follows. First, we derive an  $L^1$ estimate on $w^1$ by Duhamel's formula. This, inserted in estimate \eqref{eA.1} yields the desired estimate on $u$ and then on $\overline w$. We then go back to $w^1$ to obtain the desired estimate, by a more intricate supersolution argument.
\subsection{$L^1$ bound on $w^1$}
By definition, up to time $T_{\varepsilon,\alpha}$ one has $|w| \leq \varepsilon^\alpha$. By Duhamel's formula and the maximum principle for equation \eqref{w1}, this yields
\begin{equation}
\label{firstw1}
|w^1| \leq \varepsilon^\alpha \int_0^t e^{d \varepsilon^2 (t-s)\partial_{xx}}  e^{d(t-s)\partial_{yy}^{NR} }f'(\cdot) \mathrm ds
\end{equation} 
where $\partial_{yy}^{NR}$ denotes $\partial_{yy}$ endowed with the Neuman-Robin boundary condition of \eqref{w1}. Since $\cdot \leq v^0 + \varepsilon^\alpha$, using the above results and rescaling them, one knows that $v^0 \leq \theta$ for $x\leq(a+c_p t)\varepsilon$ for some constant $a > 0$. As a consequence,
 $$ |f'(\cdot)| \leq \text{Lip} f \times \mathbf 1_{((-a-c_ps)\varepsilon,(a+c_ps)\varepsilon)}.
 $$
Also, by the maximum principle, there exist $C(d) > 0$ and $\lambda_1(d) > 0$ such that 
$$e^{d(t-s)\partial_{yy}^{NR} } \mathbf 1_{((-a-c_ps)\varepsilon,(a+c_ps)\varepsilon)} \leq Ce^{-\lambda_1(t-s)}\mathbf 1_{((-a-c_ps)\varepsilon,(a+c_ps)\varepsilon)}.
$$
Using both estimates in \eqref{firstw1}, the maximum principle yields
\begin{equation}
\label{secondw1}
|w^1| \leq C\text{Lip} f  \varepsilon^\alpha \int_0^t e^{d \varepsilon^2 (t-s)\partial_{xx}}  e^{-\lambda_1(t-s)}\mathbf 1_{((-a-c_ps)\varepsilon,(a+c_ps)\varepsilon)} \mathrm ds.
\end{equation} 
Since $e^{d \varepsilon^2 (t-s)\partial_{xx}} $ preserves the $L^1$ norm, this gives the estimate, for some constants $C_1,C_2$ that do not depend on $\varepsilon$:
\begin{equation}
\label{w1L1}
|w^1(\cdot,y)|_{L^1(\mathbb R)} \leq C\text{Lip} f  \varepsilon^\alpha  \int_0^t e^{-\lambda_1(t-s)} 2(c_ps + a)\varepsilon \mathrm ds = \varepsilon^{\alpha+1}(C_1 + C_2t) \ll \varepsilon^\alpha,
\end{equation}
since $\delta < 1$.	

\subsection{Estimate on $u$ and $\overline w$}
Using the appendix estimate \eqref{eA.1} and Duhamel's formula one gets
\begin{alignat}{1}
\label{firstterm}
|u| \leq\  &C_3 \int_0^t \left(\int_\mathbb{R}  \frac{e^{-(x'-x)^2/(4a(t-s))}}{\sqrt{4\pi a (t-s)}} |w^1+v^0|(s,x',0) \mathrm dx'\right) \mathrm ds \\ 
\label{secondterm} + &C_4\int_0^t  e^{-\omega (t-s)} |(w^1+v^0)(s,\cdot,0)|_{L^1(\mathbb R)} \mathrm ds.
\end{alignat}
First observe that due to the above results and the rescaling, one has 
$$|v^0(s,\cdot,y)|_{L^1} \leq (C_5+c_p s)\varepsilon.
$$ Using this and estimate \eqref{w1L1}, we deal with the second term: 
$$\eqref{secondterm}  \leq C_4\int_0^t e^{-\omega (t-s)} \left(  \varepsilon^{\alpha+1}(C_1+C_2s) + \varepsilon(C_5+c_ps) \right) \mathrm ds \leq (C_6 + C_7t)(\varepsilon^{\alpha+1} + \varepsilon) \ll  \varepsilon^\alpha$$
since $1-\delta > \alpha$.
We now deal with the first term:
$$\eqref{firstterm} \leq C_3\int_0^t \frac{1}{\sqrt{4\pi a (t-s)}} |(w^1+v^0)(s,\cdot,0)|_{L^1(\mathbb R)} \mathrm ds \leq (C_8t^{1/2} + C_9t^{3/2})(\varepsilon^{\alpha+1} + \varepsilon)\ll \varepsilon^\alpha$$
since $1-\frac{3}{2}\delta > \alpha$.
As a consequence, $|u| \ll  \varepsilon^\alpha$.
Now seeing equation \eqref{wbar} as a boundary value problem for $\overline w$, we see that the above estimate on $\mu u$ provides an easy supersolution that stays above $\overline w$, that is
$$\overline w \leq \mu\left( C_6 + C_7t + C_8t^{1/2} + C_9t^{3/2} \right)\left( \varepsilon^{\alpha+1} + \varepsilon \right) \ll \varepsilon^\alpha.$$

\subsection{Back to $w^1$}
Using $w = w^1 + \overline w$ we rewrite equation \eqref{w1} as a linear non-homogeneous problem:
\begin{equation}
\label{w1lin}
  \begin{tikzpicture}
  \draw (-6,0) -- (6,0) node[pos=0.5,below] {\small{$d\partial_y w^1 + w^1 = 0$}} node[pos=0.5,above] {};

  \node at (0,-1.25) {$\partial_t w^1 - d\varepsilon^2 \partial^2_{xx} w^1 - d\partial^2_{yy} w^1 - f'(\cdot)w^1  = f'(\cdot) \overline w$};

  \draw (-6,-2.5) -- (6,-2.5) node[pos=0.5,above] {\small{$\partial_y w^1= 0$}};

  \end{tikzpicture}
\end{equation}	
Since $w^1(0) = 0$, Duhamel's formula gives
\begin{equation}\label{duhamelsursol} w^1(t) = \int_0^t w_h^s(t-s) \mathrm ds \end{equation}
where $w_h^s$ solves
\begin{equation}
\label{wh}
  \begin{tikzpicture}
  \draw (-6,0) -- (6,0) node[pos=0.5,below] {\small{$d\partial_y w_h^s + w_h^s = 0$}} node[pos=0.5,above] {};

  \node at (0,-1.25) {$\mathcal L w_h^s:= \partial_t w_h^s - d\varepsilon^2 \partial^2_{xx} w_h^s - d\partial^2_{yy} w_h^s - f'(\cdot)w_h^s  = 0$};

  \draw (-6,-2.5) -- (6,-2.5) node[pos=0.5,above] {\small{$\partial_y w_h^s = 0$}};

  \end{tikzpicture}
\end{equation}	
along with the initial condition $w_h^s(0) = (f'(\cdot) \overline w)(s).$
Inspired by the linearisation and a rescaling of the supersolution to the non-linear equation in the previous section, and using the same notations as in it, we look for a supersolution with the form
$$\overline{w_h^s}(t) = \xi(t) \partial_x\psi(x+c_p\varepsilon t + x_0^s) - \xi(t)\partial_x\tilde{\psi}(x-c_p\varepsilon t - x_0^s) + Ce^{-\omega t}\min(\Gamma, \tilde{\Gamma})$$
for some $\xi(t)$ increasing in time and initial shift $x_0^s$.
First of all, we need to ensure ordering of the initial data. That is:
$ \overline{w_h^s}(0) \geq (f'(\cdot) \overline w)(s)$,
which is obtained provided $$\overline{w_h^s}(0) \geq \text{Lip} f \varepsilon^{\alpha+\nu} \mathbf 1_{((-a-c_ps)\varepsilon,(a+c_ps)\varepsilon)}$$ for some 
$\nu < 1 -{3}/{2}\delta - \alpha$ (since $\bar w \leq K \varepsilon^{1-\frac{3}{2}\delta}$ thanks to the computations above). We achieve this by asking
\begin{equation}
 \label{condx0s}
C = (\text{Lip} f) \varepsilon^{\alpha+\nu},\quad x_0^s > (a+c_p)s\varepsilon + L_0.
\end{equation}
We will also see below that we need  $\delta < \nu$.
Combining both these conditions imposes $\delta < 1-\frac{3}{2}\delta-\alpha$, i.e. the assumption $\delta < \frac{2}{5}(1-\alpha)$ of Theorem \ref{borneinftemps}.

Now, straightforward computations give
$$
\begin{array}{rll}
\mathcal L  \overline{w_h^s} =  &\xi f'(\psi) \partial_x\psi - \xi f'(\tilde\psi) \partial_x \tilde\psi  + \dot\xi \partial_x\psi - \dot\xi\partial_x \tilde\psi  - f'(\cdot)\left(\xi \partial_x\psi - \xi \partial_x\tilde\psi \right) - \omega Ce^{-\omega t}\min(\Gamma,\tilde\Gamma)\\ 
& + Ce^{-\omega t} \partial_t \min(\Gamma,\tilde\Gamma) + C e^{-\omega t} \partial_{xx} \min(\Gamma,\tilde\Gamma) - f'(\cdot) Ce^{-\omega t} \min(\Gamma,\tilde\Gamma).
\end{array}
$$
As in Section \ref{wavelikes}, we analyse the sign of this quantity in three separate zones.
Observe that due to the rescaling between \eqref{normal} and \eqref{readi}, decay exponents $\Theta,\Theta_0$ (resp. the $\alpha$ and $\alpha_0$ from Sec. \ref{wavelikes}) and $\lambda,\tilde\lambda$ scale here as $1/\varepsilon$ as well as the lower bounds on the derivatives $\delta_L$. Remember also that we are looking only at times $t \leq \varepsilon^{-\delta},$ so we only need to find a supersolution up to this time. We also reinitialize the constants $C_i$ and $K_i$ which will be positive constants independent of $\varepsilon$.

\subsubsection{$x+c_p\varepsilon t + x_0^s < -L_0$}
As before, in this zone we have $\psi \leq \theta/2$, $\tilde\psi \geq (1+\theta_1)/2$, $\psi, \partial_x\psi \leq C_1 e^{-\lambda(x+c_p\varepsilon t + x_0^s + L_0)}$, $1-\tilde\psi, \partial_x\tilde\psi \leq C_1 e^{-\tilde\lambda(x-c_p\varepsilon t - x_0^s + L_0)}$ and $\min(\Gamma,\tilde\Gamma) \equiv \Gamma = e^{\Theta(x+c_p\varepsilon t + x_0^s)} =: e^{\Theta(\cdots)}$. As a consequence and since $\Theta < \tilde\lambda$, one has

$$\mathcal L  \overline{w_h^s} \geq \dot\xi\partial_x\psi - \dot\xi \partial_x\tilde\psi + \text{Lip} f \varepsilon^{\alpha+\nu}e^{-\omega t}e^{\Theta(\cdots)}\left[\left(-\omega + c_p \Theta\varepsilon + \Theta^2 \right) - \frac{\xi}{\varepsilon^{\alpha+\nu}}e^{(\omega-2\Theta c_p\varepsilon)t} e^{-2x_0^s \Theta}  \right].
$$

The first term inside the brackets is positive provided $\omega < \Theta^2$ (which is not a constraint since $\Theta$ grows as $1/\varepsilon$) and we can make the whole bracket positive provided \begin{equation}
\label{contraintew}
\omega < 2 \Theta c_p \varepsilon
\end{equation}
(observe that the right-hand side in \eqref{contraintew} is bounded from above and by below by positive constants that do not depend on $\varepsilon$) by taking
\begin{equation}
\label{x0sxi}
x_0^s > -\frac{1}{2\Theta} \ln \left(\left(-\omega + c_p \Theta\varepsilon + \Theta^2 \right) \frac{\varepsilon^{\alpha+\nu}}{\xi(\varepsilon^{-\delta})} \right).
\end{equation}
This will be a constraint on our future choice of $\xi(t)$, to be kept in mind.
\subsubsection{$x+c_p\varepsilon t + x_0^s \in (-L_0,L_0)$}
Here $x-c_p \varepsilon t - x_0^s < -L_0$, and $\tilde\psi \geq (1+\theta_1)/2$, $1-\tilde\psi, \partial_x\tilde\psi \leq C_2 e^{-\tilde\lambda(x-c_p\varepsilon t - x_0^s + L_0)}$, $f'(\tilde\psi) \leq -\beta$ and $\min(\Gamma,\tilde\Gamma) = \Gamma$.

\begin{alignat*}{1}
\mathcal L  \overline{w_h^s} \geq\  &\xi f'(\psi) \partial_x \psi - \xi f'(\tilde\psi) \partial_x\tilde\psi + 2\delta_{2L_0} \dot\xi - \varepsilon^{\alpha+\nu} e^{-\omega t} \left( \omega + |\Gamma|_{\mathcal C^2} \right) \\
-&f'(\cdot)\left[ \xi\partial_x\psi - \xi\partial_x\tilde\psi \right] - f'(\cdot) \varepsilon^{\alpha + \nu} e^{-\omega t} \Gamma \\
\geq\  & \xi f'(\psi)\partial_x\psi - \xi f'(\tilde\psi) \partial_x\psi + 2\delta_{2L_0} \dot\xi - \varepsilon^{\alpha+\nu} e^{-\omega t} \underbrace{\left( \omega + |\Gamma|_{\mathcal C^2} + \text{Lip}(f')|\Gamma|_{\mathcal C^2} \right)}_{C_3}\\ - &f'(\cdot)\left[ \xi\partial_x\psi - \xi\partial_x\tilde\psi \right]
\end{alignat*}
We make this positive by counterbalancing the negative terms thanks to  $\delta_{2L_0}\dot\xi$, by asking
\begin{equation}
\label{xik1}
\dot\xi \geq \frac{2}{\delta_{2L_0}} \varepsilon^{\alpha+\nu} e^{-\omega t} C_3 =: K_1 \varepsilon^{\alpha+\nu+1} e^{-\omega t} 
\end{equation}
Moreover by using $\Theta < \tilde\lambda$, $\omega < 2\Theta c_p \varepsilon$ and the previous expression of $e^{-2\Theta x_0^s}$ one obtains
$$-\xi f'(\cdot) \partial_x\tilde\psi, -\xi f'(\tilde\psi) \partial_x\tilde\psi \geq -\varepsilon^{\alpha+\nu} C_{4} e^{-\omega t}$$ so that we ask also for 
\begin{equation*}
\dot\xi \geq K_2 \varepsilon^{\alpha+\nu +1} e^{-\omega t},
\end{equation*}
which is implied by \eqref{xik1} by taking $K_1$ large enough.
The last term to counterbalance is 
\begin{alignat*}{1}
\xi\partial_x\psi [f'(\psi) - f'(\cdot)]  \geq -\xi\partial_x\psi \text{Lip}(f)\left( |v_0 - \psi| + \varepsilon^\alpha  \right)
\end{alignat*}
by the triangle inequality and the definition of $\cdot$. But we also know that 
$$
|\psi - v_0| \leq |\psi - (\psi + \tilde\psi - 1)| + |(\psi + \tilde\psi - 1) - v_0|  
\leq  |1-\tilde\psi| + C_{5} e^{-\omega_0 t}
$$
by Section \ref{sectionsmallcc} for some $C_{5}, \omega_0 > 0$ independent of $\varepsilon$. Now just as above, one can use the exponential decay of $1-\tilde\psi$ in the current zone to prove that there exists $C_{6} > 0$ such that $|1-\tilde\psi| \leq C_{6} e^{-\omega t}$. In the end 
$$ \xi\partial_x\psi [f'(\psi) - f'(\cdot)]  \geq - \text{Lip}(f) \xi \partial_x\psi \left[ C_{6} e^{-\omega t} + C_{5} e^{-\omega_0 t} + \varepsilon^\alpha \right].
$$
We can reduce $\omega$ and change the constants so that 
$$ \xi\partial_x\psi [f'(\psi) - f'(\cdot)]  \geq - \text{Lip}(f) \xi \partial_x\psi \left[ C_{7} e^{-\omega t} + \varepsilon^\alpha \right]$$
and we counterbalance this term by asking (remember the additional power of $\varepsilon$ factor due to the scaling of $\delta_{2L_0}$) 
\begin{equation}
\label{xik2}\dot\xi \geq K_2 \varepsilon \xi e^{-\omega t},\quad 
\dot\xi \geq K_3 \xi \varepsilon^{\alpha+1}  
\end{equation}
We now have to find a suitable increasing function $t\mapsto \xi(t)$ satisfying \eqref{xik1}, \eqref{xik2} and that should not increase too much so that $w^1(\varepsilon^{-\delta}) \ll \varepsilon^{\alpha}$. Since the order between the right-hand sides in \eqref{xik2}  changes at some point in time, we define $\xi$ in two parts as a continuous but only piecewise $\mathcal C^1$ function. This is not a problem since one can apply the maximum principle a second time starting from the junction. We propose 

$$\xi(t):= \left\{
	\begin{array}{ll}
		\varepsilon^{\alpha + \nu} e^{-K_2 \frac{\varepsilon}{\omega} e^{-\omega t}} & \mbox{if } t < \frac{1}{\omega} \ln\left( \frac{K_3}{K_2} \varepsilon^{-\alpha} \right) \\
		B(\varepsilon) \varepsilon^{\alpha+\nu} e^{K_3 \varepsilon^{\alpha+1}t} & \mbox{if } t \geq \frac{1}{\omega} \ln\left( \frac{K_3}{K_2} \varepsilon^{-\alpha} \right)
	\end{array}
\right.$$
with $B(\varepsilon) > 0$ uniformly bounded from above and by below in $\varepsilon$ is chosen so that $\xi$ is continuous: $B(\varepsilon) = e^{K_3 \varepsilon^{\alpha+1} \frac{1}{\omega} \ln\left( \frac{K_3}{K_2} \varepsilon^{-\alpha}\right)} e^{K_2 \frac{\varepsilon}{\omega}\frac{K_2}{K_3} \varepsilon^\alpha}$. 
Observe that \eqref{xik2} is automatically satisfied since we just integrated the stronger differential equation between the two on the associated time-intervals. Observe that \eqref{xik1} is indeed satisfied provided $K_2, K_3 > K_1$ and $\varepsilon$ is small.

Now with this choice of $\xi$ -- since $\xi \ll \varepsilon^{\alpha+\nu}$ up to time $\varepsilon^{-\delta}$ -- observe that the remaining condition on $x_0^s$ \eqref{x0sxi} is void since $x_0^s > 0$ and it reduces to the initial one \eqref{condx0s}. Finally, the only condition on $\omega$ is \eqref{contraintew}.
\subsubsection{$x+c_p\varepsilon t + x_0^s  \geq L_0$}
As before, we deal with this last zone by using symmetry and by repeating the arguments above: no stronger condition appears and the computations above hold (by eventually changing the constants).

\subsection{End of the proof of Theorem \ref{borneinftemps}}
We now estimate $w_1$ thanks to the supersolution. Coming back to \eqref{duhamelsursol} with $t = \varepsilon^{-\delta}$ one gets
$$w_1(t) \leq  K_6 \int_0^t \xi(s) \mathrm ds + K_7 \int_0^t \varepsilon^{\alpha+\nu} e^{-\omega s} \mathrm ds$$
The second term is bounded by $K_7 \varepsilon^{\alpha+\nu}/\omega \ll \varepsilon^\alpha$. For the first term, we divide the integral in two parts. Call $t_{j} = \frac{1}{\omega} \ln\left( \frac{K_3}{K_2} \varepsilon^{-\alpha}\right)$ the junction time.
$$
\int_0^t \xi(s) \mathrm ds \leq \int_0^{t_j} \xi(s) \mathrm ds + \int_{t_j}^t \xi(s) \mathrm ds  \leq K_8 \varepsilon^{\alpha + \nu}\ln(\varepsilon^{-\alpha}) + \frac{B \varepsilon^{\alpha+\nu}}{K_3 \varepsilon^{\alpha+1}} \left[ e^{K_3 \varepsilon^{\alpha+1} t} - 1 \right]
$$
by using a crude upper bound for the first part in the definition of $\xi(t)$.
Now since $\delta < \alpha + 1$, $$\frac{B \varepsilon^{\alpha+\nu}}{K_3 \varepsilon^{\alpha+1}} \left[ e^{K_3 \varepsilon^{\alpha+1} t} - 1 \right] \underset{\varepsilon\to 0}{\sim} B\varepsilon^{\alpha+\nu} t = B\varepsilon^{\alpha+\nu - \delta} \ll \varepsilon^\alpha$$ because $\nu > \delta$. 
In the end, both $\bar w$ and $w_1$ are $\ll \varepsilon^\alpha$ up to time $t = \varepsilon^{-\delta}$ so we have a contradiction.

\section{Initial data supported on the road only}
In this section we investigate the behaviour of solutions starting from $(u_0,v_0) = (\mathbf 1_{(-a,a)},0)$. We still denote  $\varepsilon:= 1/\sqrt{D}$.
\subsection{$a$ is small}
\begin{thm}
\label{u0petit}
There exists $a_0 > 0$ such that for $a < a_0$, the solution of \eqref{readi} starting from $(\mathbf 1_{(-a,a)},0)$ decays to $0$ uniformly.
\end{thm}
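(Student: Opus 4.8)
The plan is a continuation argument that exploits the fact that the nonlinearity is switched off below $\theta$. Since $v_0\equiv 0<\theta$, there is a maximal $T^\ast\in(0,+\infty]$ such that $v(t,x,y)<\theta$ for all $t<T^\ast$ and all $(x,y)$; on $[0,T^\ast)$ the term $f(v)$ vanishes identically, so $(u,v)$ solves the \emph{linear} system obtained from \eqref{readi} by deleting $f$, which is positivity preserving and conserves the total mass $\int u+\iint v=2a/\mu$. All the work is to show that, for $a$ small enough, $\sup_{t<T^\ast}\lVert v(t,\cdot,\cdot)\rVert_\infty<\theta$; by continuity this forces $T^\ast=+\infty$, and the same bounds then give uniform decay to $0$.

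First I would establish a short-time bound that does not see $a$: since $\mu u\le 1$, the component $v$ lies below the solution $\bar v$ of the homogeneous strip equation with the constant Robin datum $d\partial_y\bar v+\bar v=1$ and $\bar v(0)=0$, and one checks (boundary-layer analysis / $\mathrm{erfc}$-representation, together with $\partial_t\bar v\ge 0$) that $\lVert\bar v(t)\rVert_\infty\to 0$ as $t\to 0^+$. Hence there is $t_0=t_0(d,L)>0$, independent of $a$ and of $D$, with $\lVert v(t)\rVert_\infty\le\theta/2$ for all $t\le\min(t_0,T^\ast)$; in particular $T^\ast>t_0$.

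The main estimate uses Duhamel's formula for the road equation,
$$\mu u(t,x)=\phi_a(t,x)+\int_0^t e^{-\mu(t-s)}\bigl(e^{(t-s)\partial_{xx}}\mu v(s,\cdot,0)\bigr)(x)\,\mathrm ds,\qquad \phi_a:=e^{-\mu t}\,e^{t\partial_{xx}}\mathbf 1_{(-a,a)},$$
with the elementary bound $\lVert\phi_a(t,\cdot)\rVert_\infty\le e^{-\mu t}\min\bigl(1,2a/\sqrt{4\pi t}\bigr)$, so that the homogeneous road contribution satisfies $\lVert\phi_a\rVert_{L^1_tL^\infty_x}=O(a)$; and, for $v$, the representation through the Robin–Neumann Poisson kernel of the strip, whose $x$-marginal is the kernel of the one-dimensional Robin eigenvalue problem on $(-L,0)$ and therefore decays like $e^{-\lambda_1 t}$ with $\lambda_1>0$ independent of $D$. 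Feeding the road data back and forth produces a Volterra inequality for $W(t):=\lVert\mu u(t)\rVert_\infty$ whose kernel is a fixed combination of decaying exponentials; resolving it — or, equivalently, running a continuity/bootstrap argument in $t$ and invoking $\lVert\mu u\rVert_\infty\le 1$ on $[0,t_0]$ from the previous step — yields
$$\sup_{t<T^\ast}\ \bigl(\lVert\mu u(t)\rVert_\infty+\lVert v(t,\cdot,\cdot)\rVert_\infty\bigr)\ \le\ \tfrac{\theta}{2}+C_0\,a,$$
with $C_0=C_0(d,L,\mu)>0$ independent of $a$, $D$ and $T^\ast$. Choosing $a_0$ smaller than $\theta/(2C_0)$ and than the threshold of the short-time step, we get $\sup_{t<T^\ast}\lVert v(t)\rVert_\infty<\theta$, hence $T^\ast=+\infty$.

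With $T^\ast=+\infty$ the system remains linear for all time; since $\lVert\phi_a(t)\rVert_\infty\to 0$ and the Robin exchange is dissipative, the same Duhamel/Poisson-kernel estimates give $\lVert\mu u(t)\rVert_\infty+\lVert v(t,\cdot,\cdot)\rVert_\infty\to 0$ as $t\to+\infty$, uniformly in $(x,y)$ and in $D>d$, which is the claim. I expect the main obstacle to be keeping every constant free of $D$: in \eqref{readi} the $x$-diffusion of $v$ is only $d/D$, so $v$ cannot be expected to spread in $x$ on its own, and the $D$-uniform spreading must be carried through the road component $u$ (whose diffusion coefficient is $O(1)$) and through the $x$-marginal of the strip kernel. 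A secondary difficulty is that the loop $v\to u\to v$ is not a contraction in the sup norm; closing the estimate relies precisely on the decay $e^{-\lambda_1 t}$ coming from the dissipative Robin condition together with the $e^{-\mu t}$ factor in $\phi_a$, which make the relevant Volterra resolvent bounded and the total amount fed into $v$ of order $a$.
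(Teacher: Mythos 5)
Your overall plan — show $v$ stays below $\theta$ for all time, then use the linearity of the system below $\theta$ to conclude decay — is the same as the paper's, and the short-time bound (comparing $v$ with the strip solution driven by the constant Robin datum $1$) is sound and $D$-independent. But the central step, the Duhamel/Volterra estimate giving $\sup_{t<T^\ast}\bigl(\lVert\mu u(t)\rVert_\infty+\lVert v(t)\rVert_\infty\bigr)\le\theta/2+C_0 a$, does not work as written.

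Two concrete problems. First, the stated inequality is false at $t=0$: $\lVert\mu u(0)\rVert_\infty=\lVert\mathbf 1_{(-a,a)}\rVert_\infty=1>\theta/2+C_0a$ once $a$ is small, so at best you can hope for a bound on $v$ alone, or on $u$ for $t\ge t_0$ only. Second, and more seriously, the Volterra iteration you describe has loop gain exactly one and therefore does not close. Indeed, for a constant boundary datum $\mu u\equiv c$ the $y$-only Robin--Neumann problem on $(-L,0)$ relaxes to $v\equiv c$, and the road equation with forcing $v(\cdot,0)\equiv c$ relaxes to $\mu u= c$; so the kernels $u\mapsto v(\cdot,0)$ and $v(\cdot,0)\mapsto u$ each have total mass $1$, the combined Volterra kernel integrates to $1$, and the resolvent is not uniformly bounded in time. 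The spectral gap $\lambda_1$ that you invoke is the decay rate of the \emph{homogeneous} Robin--Neumann initial-value problem (zero boundary datum); it controls the transient but not the steady-state gain, which is unity. Running a supremum bootstrap $W\le F+\sup V$, $V\le\sup W$ just returns $\sup W\le F+\sup W$ and gains nothing.

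The paper's proof avoids this by a simpler and decisive observation that is missing from your argument: since $f\equiv 0$ on $[0,\theta]$, any constant pair $(\mu u,v)\equiv(c,c)$ with $c<\theta$ is a \emph{stationary solution} of the full nonlinear system \eqref{normal}, and the system is order-preserving. The paper first shows, by one application of Duhamel with the heat-kernel-on-a-line bound $e^{s\Delta^N}\,\mathrm d\lambda_{y=0}\le C'/\sqrt s$, that $v(t)\le C(t+2C'\sqrt t)$ unconditionally; hence at a fixed time $t_1$ (independent of $a$ and $D$) one has $v(t_1)\le\theta/2$ and, inserting this into the Duhamel formula for $u$, $\mu u(t_1)\le O(a)+\theta/2$. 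For $a$ small this gives $\mu u(t_1),v(t_1)\le 2\theta/3$, and the comparison principle with the constant steady state $2\theta/3$ then keeps $(\mu u,v)\le 2\theta/3<\theta$ for all $t\ge t_1$, after which the system is linear and decays. If you want to follow your route you must insert this constant-supersolution step (or some equivalent sharp mechanism that defeats the unit-gain feedback); the Duhamel/Volterra estimate alone cannot deliver a global-in-time sup bound below $\theta$.
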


The proof relies on a suitable reformulation of equation \eqref{normal} and a crude  bound on $f$.
Observe that, if we
replace $v$ by its even extension on $\mathbb R\times [-L,L]$, we have 
\begin{equation}
\begin{cases}
\partial_t v - d\varepsilon^2 \partial_{xx}^2 v - d\partial_{yy}^2 v = f(v) + 2d(\mu u-v)(x,0)\mathrm d\lambda_{y=0} \\
\partial_t u - \partial_{xx}^2 u +  \mu u = v(x,0)
\end{cases}
\end{equation}
where $\mathrm d\lambda_{y=0}$ denotes the Lebesgue measure on the line $\{y=0\}$.

\begin{lem}
\label{vsqrtt}
Let $C = \max(\text{Lip} f, 2d)$. Then 
$$v(t,x,y) \leq C(t + 2C'\sqrt{t})$$
where $C'$ is a constant that depends only on $d$ and $L$.
\end{lem}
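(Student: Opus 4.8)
The plan is to turn the crude pointwise bounds on $f(v)$ and on the boundary exchange term into a comparison with a single \emph{linear} problem, and then to read off the estimate from its Duhamel representation. Work on the doubled strip $\mathbb R\times[-L,L]$ with the even extension introduced just above the statement: since $v_0\equiv 0$, this extension solves
$$\partial_t v-d\varepsilon^2\partial_{xx}^2v-d\partial_{yy}^2v=f(v)+2d(\mu u-v)(x,0)\,\mathrm d\lambda_{y=0}$$
with homogeneous Neumann conditions at $y=\pm L$ and zero initial data. The first step is to notice that the right-hand side is dominated by fixed data: because $f\equiv0$ on $[0,\theta]$ and $0\le v\le1$, one has $0\le f(v)\le\mathrm{Lip}\,f$ pointwise; and because $0\le\mu u\le1$ and $v\ge0$, one has $2d(\mu u-v)(x,0)\le 2d$. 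Hence, with $C=\max(\mathrm{Lip}\,f,2d)$, the source is $\le C$ on the bulk and $\le C$ on the surface $\{y=0\}$, so it is dominated by $C(\mathrm dx\,\mathrm dy+\mathrm d\lambda_{y=0})$.

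Next I would use the mild (Duhamel) formulation of the equation for $v$ together with positivity of the Neumann heat semigroup $e^{\tau\mathcal A}$, $\mathcal A=d\varepsilon^2\partial_{xx}^2+d\partial_{yy}^2$, to obtain
$$v(t)\ \le\ C\int_0^t e^{(t-s)\mathcal A}\mathbf 1\,\mathrm ds\ +\ C\int_0^t e^{(t-s)\mathcal A}\,\mathrm d\lambda_{y=0}\,\mathrm ds .$$
The first integral equals $Ct$, since $e^{\tau\mathcal A}\mathbf 1=\mathbf 1$. For the second, $e^{\tau\mathcal A}=e^{\tau d\varepsilon^2\partial_{xx}^2}\otimes e^{\tau d\partial_{yy}^2}$ (Neumann in $y$); the $x$-part fixes the constant profile, the $y$-part maps $\delta_0$ to the Neumann heat kernel $p^N_{\tau d}(y,0)$ of the interval $[-L,L]$, so that the second term equals $C\int_0^t p^N_{d(t-s)}(y,0)\,\mathrm ds$, a function of $(t,y)$ only. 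It then remains to bound this accumulated boundary input by $2C'\sqrt t$ with $C'=C'(d,L)$, and the lemma follows.

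For that last estimate I would split on the size of $t$. If $t\ge 1/C$, the inequality is trivial from Theorem \ref{existence}, since $v\le1\le Ct\le C(t+2C'\sqrt t)$; so it suffices to treat $0<t<1/C$, where $0<d(t-s)<1/2$, a bounded range of times. On such a range the method of images gives $p^N_{d(t-s)}(y,0)\le c_1(L)/\sqrt{d(t-s)}$, because every reflected image of $0$ in $[-L,L]$ sits at distance $\ge L$ from any $y$ and hence contributes only an exponentially small, uniformly bounded correction to the principal Gaussian. Integrating in $s$ yields $\int_0^t p^N_{d(t-s)}(y,0)\,\mathrm ds\le (2c_1(L)/\sqrt d)\sqrt t=:2C'\sqrt t$, and adding the two contributions gives $v(t,x,y)\le C(t+2C'\sqrt t)$. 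The only genuinely delicate point is controlling $\int_0^t p^N_{d(t-s)}(y,0)\,\mathrm ds$: one must absorb the integrable singularity of the heat kernel as $s\to t$ — which is precisely what produces the $\sqrt t$ — while the long-time behaviour of the kernel on the compact strip is simply bypassed by the trivial bound $v\le1$; a minor technical point is to justify the comparison against the distributional surface source, which is handled either by regularising $\mathrm d\lambda_{y=0}$ or by working throughout with the mild formulation.
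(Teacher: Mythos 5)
Your proof is correct and follows essentially the same route as the paper: Duhamel's formula for the even extension on the doubled strip, the crude pointwise bounds $f(v)\le \mathrm{Lip}\,f$ and $2d(\mu u - v)(\cdot,0)\le 2d$, tensorization of the Neumann heat semigroup, and an $s^{-1/2}$ bound on the one-dimensional $y$-kernel evaluated at the boundary. The only cosmetic difference is that you obtain the kernel bound via the method of images after a preliminary reduction to $t<1/C$, whereas the paper invokes the eigenfunction expansion and the estimate $\sum_{k\ge0}e^{-\lambda_k s}\le C'/\sqrt{s}$.
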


\begin{proof}

%
%
This is basically an Aronson type inequality (see \cite{AronsonEst}), we give a quick computation here. By Duhamel's formula,
\begin{equation}
\label{duhamel}
v(t,x,y) = \int_0^t e^{s\Delta{d,\varepsilon}^N} [f(v(t-s,x,y)) + 2d(\mu u -v)(t-s,x,0)\mathrm d \lambda_{y=0}] \mathrm ds
\end{equation}
where $\Delta_{d,\varepsilon}^N = d \varepsilon^2 \partial_{xx}^2 + d\partial_{yy}^2$ endowed with Neumann boundary conditions on $y=\pm L$. Since $\mathbb R \times (-L,L)$ is a product domain and since $\partial_{xx}^2$ and $\partial_{yy}^2$ commute, we can compute this heat kernel as follows. 
Denote $\lambda_k = d(k\pi /(2L))^2$ the eigenvalues of $d\partial_{yy}^2$ on $(-L,L)$ with Neumann conditions and $\phi_k$ the associated eigenfunctions. Then the  heat kernel is 
$K_2(t,y,y') =\displaystyle \sum_{k\geq 0} e^{-\lambda_k t} \phi_k(y) \phi_k(y'). 
$ 
For $d\varepsilon^2 \partial_{xx}^2$ on $\mathbb R$ the heat kernel is  
$\displaystyle K_1(t,x,x') = \frac{1}{\sqrt{4\pi d\varepsilon^2 t}} e^{-(x-x')^2/(4d\varepsilon^2 t)}.
$
As a consequence, 
$$
e^{s\Delta{d,\varepsilon}^N} \mathrm d \lambda_{y=0} = \int_{\mathbb R} K_1(s,x,x') K_2(s,y,0) \mathrm dx' = \sum_{k\geq 0} e^{-\lambda_k s} \phi_k(y)\phi_k(0),
$$
which of course depends only on $y$ and is even in $y$ (the $\phi_k$ being even or odd). Observe that this is nothing more than the fundamental solution of the diffusion equation in $y$ on $(-L,L)$. Because the $\phi_k$ are uniformly bounded by  $C'$ depending only on $d$ and $L$ one gets

$$ e^{s\Delta{d,\varepsilon}^N} \mathrm d \lambda_{y=0} \leq C' \sum_{k\geq 0} e^{-\lambda_k s} \leq C'/\sqrt{s}$$ for another constant $C'$. The last inequality comes from the growth of $\lambda_k$ as $Ck^2$.

Going back to \eqref{duhamel} and using $f(v) \leq \text{Lip} f$ as well as $\mu u - v \leq 1$ and the positivity of the integral, one gets 

$$ v(t,x,y) \leq C\int_0^t (1+C'/\sqrt{s}) \mathrm ds \leq C(t+2C' \sqrt t),
$$
which implies the lemma.
\end{proof}

\begin{lem}
We have
$$u(t,x) \leq \frac{2 e^{-\mu t}}{\sqrt{4\pi t}}a + \frac{Ct^2}{2} + \frac{4CC'}{3}t^{3/2}.
$$
\end{lem}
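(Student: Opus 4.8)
The plan is to represent $u$ through Duhamel's formula, viewing the boundary trace $v(\cdot,\cdot,0)$ as a source term, and then to substitute the pointwise bound on $v$ proved in Lemma~\ref{vsqrtt}.

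First I would remove the zero-order term by setting $U(t,x) = e^{\mu t} u(t,x)$, which solves the pure heat equation $\partial_t U - \partial_{xx}^2 U = e^{\mu t}\, v(t,x,0)$ with initial datum $U(0,\cdot) = u_0 = \mathbf 1_{(-a,a)}$. Duhamel's formula then yields
$$
u(t,x) = e^{-\mu t}\bigl(e^{t\partial_{xx}} u_0\bigr)(x) + \int_0^t e^{-\mu(t-s)}\bigl(e^{(t-s)\partial_{xx}} v(s,\cdot,0)\bigr)(x)\,\mathrm ds,
$$
where $e^{\tau\partial_{xx}}$ denotes convolution on $\mathbb R$ with the Gaussian kernel $G_\tau(z) = (4\pi\tau)^{-1/2} e^{-z^2/(4\tau)}$.

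For the first term I would bound $G_t \leq (4\pi t)^{-1/2}$ and use that $u_0$ is supported on an interval of length $2a$, which gives $\bigl(e^{t\partial_{xx}} u_0\bigr)(x) \leq 2a/\sqrt{4\pi t}$ uniformly in $x$; this produces the first summand. For the integral term I would use the maximum principle to note $v \geq 0$ and then invoke Lemma~\ref{vsqrtt}, namely $v(s,x,0) \leq C\bigl(s + 2C'\sqrt s\bigr)$: since $e^{(t-s)\partial_{xx}}$ is positivity-preserving and fixes constant functions, and $e^{-\mu(t-s)} \leq 1$, the integral is at most $\int_0^t C\bigl(s + 2C'\sqrt s\bigr)\,\mathrm ds = \dfrac{Ct^2}{2} + \dfrac{4CC'}{3}\, t^{3/2}$. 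Summing the two contributions gives exactly the claimed inequality.

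There is essentially no obstacle here; the only points to verify are that the Duhamel representation is legitimate (which follows from the well-posedness and regularity already recorded for the system in Theorem~\ref{existence}) and that the trace of $v$ to which Lemma~\ref{vsqrtt} is applied is precisely the quantity appearing as the source in the $u$-equation of \eqref{readi}, so that the two estimates compose without any further rescaling.
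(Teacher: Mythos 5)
Your argument is correct and is essentially the same as the paper's: both represent $u$ via Duhamel's formula with the boundary trace of $v$ as a source, bound the free-evolution term by $2a e^{-\mu t}/\sqrt{4\pi t}$ using the Gaussian kernel bound and the support of $u_0$, and bound the Duhamel integral by inserting the estimate from Lemma~\ref{vsqrtt} together with $e^{-\mu(t-s)}\leq 1$ and the fact that the heat semigroup fixes constants. The only difference is cosmetic: you make explicit the integrating-factor substitution $U=e^{\mu t}u$ that the paper leaves implicit.
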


\begin{proof}
We insert the previous estimate on $v(t,x,0)$ in the equation satisfied by $u$ and solve it using Duhamel's formula. By the maximum principle, this gives the following upper bound:
\begin{alignat*}{1}
u(t,x) & \leq e^{-\mu t}e^{t\partial_{xx}^2}u_0 + C\int_0^t e^{-\mu (t-s)}e^{(t-s)\partial_{xx}^2}(s+2C'\sqrt s) \mathrm ds \\
& = e^{-\mu t} \int_{-a}^a \frac{1}{\sqrt{4\pi t}}e^{-\frac{(x-x')^2}{4t}} \mathrm dx' + C\left(\frac{t^2}{2} + \frac{4}{3}C't^{3/2}\right)
\end{alignat*}
which gives the desired result.
\end{proof}

\begin{proof}[Proof of Theorem \ref{u0petit}]
Chose $t_1'$ such that $C(t_1'^2 + 2C' t_1'^{3/2}) =  \frac{\theta}{2}$ and set $t_1 =  \max(1,t_1')$. As a consequence, at time $t=t_1$ one has $v \leq \frac{\theta}{2}$ and 
$$ \mu u(t,x) \leq 2e^{-\mu t_1}\frac{1}{\sqrt{4\pi t_1}}a + \frac{\theta}{2} \leq \frac{2\theta}{3},
$$
 if $a < a_0$ for some $a_0$. Then the maximum principle yields that from this time $\mu u, v$ will always stay below the constant solution $2\theta/ 3$ of \eqref{normal}. And so,  $\mu u$ and $v$ will tend to $0$.
\end{proof}

\subsection{Best case scenario: $a=+\infty$}
In this subsection we take $\mu u_0 \equiv 1$. Since both the initial data and equation \eqref{readi} enjoy here a translation invariance in the $x$ direction, $u$ and $v$ do not depend on $x$. We prove the following

\begin{thm}
\label{theoremmu}
There exist $\mu^\pm > 0$ such that:
\begin{enumerate}[a)]
\item If $\mu > \mu^+$, $\mu u$ and $v$ converge uniformly to $1/(\mu(L+1/\mu))$ as $t\to +\infty$.
\item If $\mu < \mu^-$, $\mu u$ and $v$ converge uniformly to $1$ as $t\to +\infty$.
\end{enumerate}
\end{thm}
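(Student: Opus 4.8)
\emph{Reduction and a conserved quantity.} Since $\mu u_0\equiv 1$ and $v_0\equiv 0$ are independent of $x$, so are $u$ and $v$, and \eqref{readi} becomes the one–dimensional coupled system $\dot u=-\mu u+v(t,0)$, $v_t-d\,v_{yy}=f(v)$ on $(-L,0)$, with $d\,v_y(t,0)+v(t,0)=\mu u(t)$, $v_y(t,-L)=0$, $u(0)=1/\mu$, $v(0,\cdot)\equiv 0$; by Theorem~\ref{existence} one has $0\le\mu u,v\le 1$, and since $v\ge 0$ this gives $\mu u(t)\ge e^{-\mu t}$. Integrating the $v$–equation over $(-L,0)$ and adding the $u$–equation yields
\[
\frac{d}{dt}\Bigl(u(t)+\int_{-L}^0 v(t,y)\,dy\Bigr)=\int_{-L}^0 f(v(t,y))\,dy\ \ge\ 0,
\]
so $u+\int_{-L}^0 v$ is nondecreasing, equals $1/\mu$ at $t=0$, and stays equal to $1/\mu$ as long as $v\le\theta$. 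Stationarity of $u$ forces $\mu u=v(0)$, hence with the Robin condition $v_y(0)=0$, and $v_y(-L)=0$; multiplying $-dv''=f(v)$ by $v'$ then shows that the only stationary configuration with $v>\theta$ everywhere is $v\equiv 1$, $u=1/\mu$, whereas the stationary configurations with $v\le\theta$ are $v\equiv c\le\theta$, $u=c/\mu$, of mass $c(L+1/\mu)$, the one of mass $1/\mu$ being $v\equiv\tfrac1{1+\mu L}=\tfrac1{\mu(L+1/\mu)}$. This identifies both candidate limits.

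\emph{Part (a): $\mu>\mu^+$.} I would first show that $v<\theta$ for all $t$. If $t^\star$ were the first time with $\max_y v(t^\star,\cdot)=\theta$, then on $[0,t^\star)$ the reaction is off and $v$ solves the linear heat equation with Robin datum $\mu u\le 1$ and $v(0,\cdot)=0$; comparison with the same linear problem with datum $\equiv 1$ (which goes to $0$ uniformly as $t\to 0$) forces $t^\star\ge s_0$ for some fixed $s_0=s_0(d,L,\theta)>0$. For $t\ge s_0$ the explicit identity $v_y(0,t)=\tfrac1d(\mu u-v(0))$ gives $|v_y(0,t)|\le 1/d$, and interior parabolic estimates over $[t^\star-s_0/2,t^\star]$ (using $0\le v\le 1$) control $v_y$ away from $y=0$, so $\|v_y(t^\star,\cdot)\|_\infty\le C_0(d,L,s_0)$ \emph{independently of $\mu$}. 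Since $v(t^\star,\cdot)$ attains $\theta$, this forces $\int_{-L}^0 v(t^\star,\cdot)\ge\eta$ for a fixed $\eta=\eta(d,L,\theta)>0$, contradicting $\int_{-L}^0 v(t^\star,\cdot)\le 1/\mu$ as soon as $\mu>\mu^+:=\max\bigl(1/\eta,\tfrac{1-\theta}{\theta L}\bigr)$. Hence $f\equiv 0$ for all time, the system is linear, and $u+\int_{-L}^0 v\equiv 1/\mu$. Convergence of $(u,v)$ to $\bigl(\tfrac1{\mu(1+\mu L)},\tfrac1{1+\mu L}\bigr)$ then follows from the spectral theory of the generator $\mathcal A(u,v)=(-\mu u+v(0),\,d\,v_{yy})$ on $\mathbb R\times L^2(-L,0)$: it has compact resolvent, generates an order–preserving irreducible semigroup, has one–dimensional kernel $\mathbb R\cdot(1/\mu,1)$, and boundedness of the trajectory excludes spectrum in $\{\mathrm{Re}\,\lambda\ge 0\}\setminus\{0\}$ and a Jordan block at $0$; hence $e^{t\mathcal A}$ converges to the spectral projection onto $\ker\mathcal A$, which by conservation of mass is exactly $\bigl(\tfrac1{\mu(1+\mu L)},\tfrac1{1+\mu L}\bigr)$, uniformly in $y$ (and $\mu>\mu^+$ makes this limit $<\theta$, consistently).

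\emph{Part (b): $\mu<\mu^-$.} Fix $\delta_0:=(1-\theta)/3$, so $1-\delta_0>\theta$. Since $\mu u(t)\ge e^{-\mu t}$ and $f\ge 0$, the function $v$ dominates the solution $\underline v$ of the \emph{linear} problem $\underline v_t=d\,\underline v_{yy}$, $\underline v_y(-L)=0$, $d\,\underline v_y(0)+\underline v(0)=1-\delta_0$, $\underline v(0,\cdot)=0$, on any interval $[0,T_1]$ with $e^{-\mu T_1}\ge 1-\delta_0$; and $\underline v$ increases to its steady state $\equiv 1-\delta_0>\theta$, so there is a fixed $T_1=T_1(d,L,\theta)$ with $\underline v(T_1,\cdot)>\theta$ on $(-L,0)$. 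Set $\mu^-:=\tfrac1{T_1}\ln\tfrac1{1-\delta_0}$; for $\mu<\mu^-$ one has, at $t=T_1$, both $\min_y v(T_1,\cdot)>\theta$ and $\mu u(T_1)\ge e^{-\mu T_1}\ge 1-\delta_0>\theta$. Pick $c\in(\theta,1)$ with $c\le\min_y v(T_1,\cdot)$ and $c\le\mu u(T_1)$; then the constant state $(c/\mu,\,c)$ is a stationary \emph{subsolution} (the only nontrivial check being $-f(c)\le 0$, true since $c>\theta$), so the trajectory of the (monotone) system issued from it at $t=T_1$ is nondecreasing in $t$ and, bounded above by $(1/\mu,1)$, converges to a steady state $\ge(c/\mu,c)$, which by the stationary analysis above is necessarily $(1/\mu,1)$. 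Comparison gives $(u,v)\ge$ this trajectory while $(1/\mu,1)$ is a supersolution lying above $(u,v)$, so $\mu u,v\to 1$ uniformly.

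\emph{Expected main obstacle.} In Part (a) the delicate step is the $\mu$–uniform conversion of a pointwise value near $\theta$ into a fixed amount of mass $\int_{-L}^0 v\ge\eta>0$; the rest is standard linear/spectral theory with $d,L$ fixed. In Part (b) the point requiring care is the classification of stationary configurations — in particular the absence of any intermediate steady state $v\equiv c$ with $c\in(\theta,1)$, which holds because the Robin exchange condition forces $v_y(0)=0$ at equilibrium and then $-dv''=f(v)$ with Neumann data admits no such solution — since this is exactly what prevents the solution from stalling between ignition and $v\equiv 1$.
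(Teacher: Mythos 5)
Your proof is correct, but it takes a genuinely different route from the paper's in both halves. For part (a), the paper argues forward in time: it uses the Aronson-type short-time bound $v(t,\cdot)\leq C\sqrt t$ (Lemma~\ref{vsqrtt}), feeds it into Duhamel's formula for $u$ to get $\mu u(t)\leq e^{-\mu t}+C\mu t^{3/2}$, and exhibits an explicit small time $t_\mu$ at which both $\mu u$ and $v$ lie below $\theta$, after which the constant $\theta$ (a zero of $f$) is a stationary barrier. You argue by contradiction, combining conservation of mass with a $\mu$-uniform bound on $v_y$ --- a neat idea, but the step you flag is exactly where care is required: interior estimates do not cover the Robin boundary layer near $y=0$, and a Schauder estimate up to $y=0$ would need a $t$-H\"older bound on the boundary datum $\mu u(t)$, whose time derivative scales like $\mu$. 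The gradient bound is nevertheless true; the clean way to see it is the kernel representation $v(t,y)=\frac1d\int_0^t K(t-s,y,0)\,\mu u(s)\,ds$ with $K$ the Robin--Neumann heat kernel on $(-L,0)$ (the boundary forcing enters as $\mu u\,\delta_{\{y=0\}}$ in the weak formulation), together with the uniform bound $\int_0^\infty |\partial_y K(\tau,y,0)|\,d\tau\leq C(d,L)$ for all $y\in[-L,0]$; I would substitute this kernel computation for the appeal to interior estimates. For part (b) the comparison with the linear Robin problem is the same as the paper's, but you close differently: the paper restricts to a sub-interval $(-L_0,0)$, controls the eigenfunction expansion there, and invokes the Kanel'/Aronson--Weinberger ignition threshold; you observe instead that the linear comparison converges uniformly on all of $(-L,0)$ to a constant $>\theta$, then propagate the constant stationary subsolution $(c/\mu,c)$, $c>\theta$, and classify steady states above it via the Neumann energy identity (forced by $\mu u=v(0)$ and the Robin condition at equilibrium). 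Your closing is more elementary and stays entirely inside the $x$-independent, conserved-mass setting --- as the paper itself remarks, the full ignition-threshold machinery is overkill here, and avoiding it is a genuine simplification. Both arguments are terse on the final spectral/convergence step in part (a), which is acceptable but worth fleshing out via the eigenfunction expansion for $v$ and Duhamel for $u$.
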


\begin{proof}[Proof of point a)]
Using Lemma \ref{vsqrtt} one gets,  for $t\leq 1$: $v(t,x,y) \leq C \sqrt t$ (for some constant $C$ different than the $C$ in the afore mentioned Lemma).
Using this in the equation for $u$, one gets 
$$\mu u(t,x) \leq e^{-\mu t} + C\mu \int_0^t e^{-\mu(t-s)} \sqrt s\  \mathrm ds \leq e^{-\mu t} + C\mu t^{3/2}. 
$$
So, at 
$t_\mu = \displaystyle\left(\frac{\theta/(2C)}{\mu}\right)^{2/3},
$
provided $\mu$ is large enough so that $e^{-\mu t_\mu} \leq \theta/2$ and $v \leq \theta$, i.e.
$$ \mu \geq \max\left( \left(\frac{2C}{\theta}\right)^2 | \ln(\theta /2) |^3, \frac{1}{2} \left( \frac{C}{\theta}  \right)^2 \right) =: \mu^+,
$$
one has $\mu u, v \leq \theta$. By the comparison principle, this will hold for all $t > t_\mu$ and $v$ never gets above $\theta$ anywhere.
As a consequence, $\mu u(t), v(t,y)$ converge to a common limit $l \leq \theta$ satisfying $(L+1/\mu)l = 1/\mu$ (conservation of mass).
\end{proof}
\begin{proof}[Proof of point b)]
The idea of the proof is simple: we investigate whether the sole diffusion is able to transfer enough mass from $u$ to $v$ so that in finite time $v$ is above $\theta$ on a large enough interval $(-L_0,0)$. The quantity $L_0$  is linked to Kanel' and Aronson-Weinberger  \cite{KanelFinite,AW}. Using $v \geq 0$ and the strong parabolic maximum principle one gets
$ \mu u \geq e^{-\mu t}$.
So that setting $\theta' = (1+\theta)/2$ and $t_M = \frac{1}{\mu}\ln\left(\frac{1}{\theta'}\right)$ one has $\mu u \geq \theta'$ while $t \leq t_M$ so that, by the maximum principle, Hopf's lemma and the positivity of $f$, up to time $t_M$ we have $v \geq \underline v$ the solution of 

\begin{equation}
\label{soussoltM}
  \begin{tikzpicture}
  \draw (-6,0) -- (6,0) node[pos=0.5,below] {\small{$d\partial_y \underline v + \underline v = \theta' $}} ;

  \node at (0,-1.25) {$ \partial_t \underline v  - d\partial_{yy}^2 \underline v - d\varepsilon^2 \partial_{xx}^2\underline v   = 0$};

  \draw (-6,-2.5) -- (6,-2.5) node[pos=0.5,above] {\small{$\partial_y\underline v  = 0$}};

  \end{tikzpicture}
\end{equation}
starting from $\underline v_0 = 0$. Observe that  $\underline v$ is independent of $x$, so  we will call it $\underline v(t,y)$ from now on. The function $w = \theta' - \underline v$ is easily decomposed as
$$w(t,y) = \sum_{k \geq 0} e^{-\lambda_k t} \tilde w_k(0) \cos\left(\sqrt{ \frac{\lambda_k}{d}}(y+L)\right),
$$
where the $\frac{\lambda_k}{d} > 0$ solve $\sqrt x = \text{cotan}(\sqrt x L)$ and $\displaystyle\sum \tilde w_k(0) \cos\left(\sqrt{ \frac{\lambda_k}{d}}(y+L)\right) = \theta'.$
By the maximum principle we have, for $y\in (-L_0,0)$:
$$w \leq  \theta' e^{-\lambda_1 t} \frac{\cos\left(\sqrt{\lambda_1/d}\ (L-L_0)\right)}{\cos\left(\sqrt{\lambda_1/d}\ L\right)} =: K e^{-\lambda_1 t},$$
so that for $y\in (-L_0,0), \  v(t_M,y) \geq \theta' - K e^{-\lambda_1 t_M} \geq \frac{1+3\theta}{4}$ provided 
\begin{equation}
\label{estimu}
\mu \leq \lambda_1 \frac{\ln(1/\theta')}{\ln\left( \frac{4K}{1-\theta} \right)} =:\mu^-
\end{equation}
%
Chose $L_0$ large enough in the beginning so that an initial condition $\mu u(t_M), v(t_M,y) \geq (1+3\theta)/4$, for all $y\in (-L_0,0)$, leads to invasion: $\mu u, v \to 1$ as $t \to \infty$. The existence of such an $L_0$ follows from Kanel', Aronson-Weinberger \cite{KanelFinite, AW} on $\mathbb R$. In our context, it is in fact simpler since total mass is confined in $(-L,0)$ and a single point whereas in \cite{KanelFinite, AW} it can be spread on all $\mathbb R$.
\end{proof}

\subsection{Large $a < +\infty$}
We use the best case scenario described above to prove the existence of large but finite $a$ that will lead to invasion.
Our proof relies on the fact that $\mathbf 1_{(-a,a)}$ and $\mathbf 1_{(-\infty,\infty)}$ are close in $L^\infty$ weighted by some $\rho(x)$ with tails $e^{-|x|}$ and that such a weight preserves the semi-linear parabolic and monotone structure of the system \eqref{readi}. In particular, the "weighted equation" will have a locally (in time) Lipschitz continuous flow. Going back to the original solutions, this Lipschitz continuity becomes a uniform continuity on every compact subset.  

\begin{lem}
\label{lemweight}
There exists a smooth weight $\rho(x)>0$ such that $\rho(x) = e^{-|x|}$ for $|x| > 1$ and such that the following holds.
Denote $\| (f,g) \|_X =  \max\left( \| \rho f \|_{L^\infty(\mathbb R)}, \| \rho g \|_{L^\infty(\Omega_L)}\right).$
For every $T,M> 0$, there is  $C_{T,M} > 0$ independent of  $D$  such that
$$ \sup_{0 \leq t \leq T, x\in(-M,M)} \left( |u- \tilde u| + |v - \tilde v| \right) \leq C_{T,M} \| (u_0 - \tilde u_0, v_0 - \tilde v_0)\|_X $$
for every $(u,v)$ and $(\tilde u, \tilde v)$ solutions of \eqref{readi} starting from respectively $(u_0,v_0)$ and $(\tilde u_0, \tilde v_0)$. 
\end{lem}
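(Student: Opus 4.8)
The plan is to \emph{conjugate} system \eqref{readi} by the weight $\rho$ and then run a comparison argument with explicit exponential barriers, the whole point being that conjugation turns \eqref{readi} into a system whose lower-order coefficients are bounded \emph{uniformly in $D>d$}. First I would fix $\rho$: take it smooth and positive with $\rho(x)=e^{-|x|}$ for $|x|>1$ and any smooth positive interpolation on $[-1,1]$. The only facts I will use are that $0<\rho\le 1$, that $\rho$ is bounded below by a positive constant $c_M:=\min_{[-M,M]}\rho$ on each compact set, and, crucially, that $b:=\rho_x/\rho$ and $c:=\rho_{xx}/\rho-2b^2$ are bounded on $\mathbb R$ (for $|x|>1$ one has $b\equiv -\mathrm{sgn}(x)$ and $c\equiv -1$).

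Setting $U=\rho u$, $V=\rho v$ (and likewise $\tilde U,\tilde V$) and using that $\rho$ depends on $x$ only, \eqref{readi} becomes the \emph{weighted system}: $U_t-U_{xx}+2bU_x+(c+\mu)U-V(\cdot,0)=0$ on $\mathbb R$, together with $V_t-\tfrac dD V_{xx}+\tfrac{2d}{D}bV_x+\tfrac dD cV-dV_{yy}=\rho f(V/\rho)$ on $\Omega_L$, with the \emph{unchanged} boundary conditions $d\partial_yV+V=\mu U$ on $\{y=0\}$ and $\partial_yV=0$ on $\{y=-L\}$. The new zeroth- and first-order coefficients are bounded by a constant depending only on $\mu,d,\|b\|_\infty,\|c\|_\infty$, and those coming from the $V$-equation even carry a harmless factor $d/D<1$; the second-order part is untouched, and the cooperative structure of \eqref{normal}/\eqref{readi} is preserved, so the weighted flow inherits the maximum principle of \cite{BRR,D1,D2}.

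Next I would subtract the two weighted systems and set $w=U-\tilde U$, $W=V-\tilde V$. Writing $\rho\big(f(v)-f(\tilde v)\big)=f'(\xi)(V-\tilde V)$ with $\xi$ between $v$ and $\tilde v$ and $|f'(\xi)|\le \mathrm{Lip}\,f$ (since $0\le v,\tilde v\le1$), the pair $(w,W)$ solves a \emph{linear} cooperative parabolic system whose coefficients are bounded independently of $D$; moreover $w,W$ are bounded (as $0\le\mu u,v\le1$ and $\rho\le1$), which is what the comparison principle on the unbounded domain requires. I then close with constant-in-$(x,y)$ barriers: put $A=\|(u_0-\tilde u_0,v_0-\tilde v_0)\|_X=\max(\|w(0)\|_\infty,\|W(0)\|_\infty)$ and test $\overline w(t)=Ae^{Kt}$, $\overline W(t)=(1+\mu)Ae^{Kt}$. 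For $K$ large enough (depending only on $\mu,\mathrm{Lip}\,f,\|c\|_\infty$, hence not on $D$) this is a supersolution of the difference system: the interior inequalities reduce to $K\ge 1+\|c\|_\infty$ and $K\ge \mathrm{Lip}\,f+\|c\|_\infty$; at $y=0$ one gets $d\partial_y\overline W+\overline W-\mu\overline w=Ae^{Kt}>0$ for \emph{every} $\mu>0$; at $y=-L$ the Neumann condition holds with equality; and $\overline w(0)\ge|w(0)|$, $\overline W(0)\ge|W(0)|$. Since $(-\overline w,-\overline W)$ is the corresponding subsolution, comparison gives $|w(t)|\le Ae^{Kt}$, $|W(t)|\le(1+\mu)Ae^{Kt}$. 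Dividing by $\rho(x)\ge c_M$ on $(-M,M)$ and taking $\sup_{0\le t\le T}$ yields $|u-\tilde u|+|v-\tilde v|\le \tfrac{(2+\mu)e^{KT}}{c_M}\,\|(u_0-\tilde u_0,v_0-\tilde v_0)\|_X$, i.e. the claim with $C_{T,M}=(2+\mu)e^{KT}/c_M$.

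I expect the only real point of care — besides checking that the conjugation introduces no $D$-dependence — to be making the barrier compatible with the Robin exchange condition for \emph{all} $\mu>0$ rather than just $\mu\le1$; this is exactly what forces the asymmetric choice $\overline W=(1+\mu)\overline w$. Everything else is routine verification of sub/supersolution inequalities and of the cooperative comparison principle for bounded solutions on $\mathbb R\times(-L,0)$.
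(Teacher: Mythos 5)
Your proposal is correct and follows the same route as the paper: conjugate \eqref{readi} by the weight $\rho$, observe that the resulting lower-order coefficients are bounded uniformly in $D>d$ (with the harmless factor $d/D<1$ on the $v$-side), then close by the maximum principle for the cooperative difference system. The only cosmetic difference is in the endgame: the paper rescales the weighted difference by $e^{-Ct}$ so that the zero-order terms become positive and the extrema of $(\mu\mathfrak U,\mathfrak V)$ are attained at $t=0$, whereas you close with the explicit barrier pair $(Ae^{Kt},(1+\mu)Ae^{Kt})$ — which is the same Gronwall-via-maximum-principle mechanism, your asymmetric constant $(1+\mu)$ neatly making the Robin inequality $d\partial_y\overline W+\overline W-\mu\overline w>0$ hold for every $\mu>0$.
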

\begin{rmq}
Observe  that the above Lemma could be stated for any $\rho_\alpha(x) = e^{-\alpha |x|}$ (with $\alpha > 0$) by changing the constants: this is due to the scaling invariance $(t,x,y) \to (\Lambda t, \sqrt \Lambda x, \sqrt \Lambda y)$ of equation \eqref{readi} ; indeed, $\rho_\alpha$ becomes $\rho_1$ in the rescaling by $\Lambda = \alpha^2$.
\end{rmq}
\vspace{5pt}
\begin{proof}[End of the proof of Theorem \ref{thmintroproparoutel}]
Once Lemma \ref{lemweight} is proved, the end of Theorem \ref{thmintroproparoutel} follows easily. Indeed, let $u_0 = \mathbf 1_{(-a,a)}$ and $v_0 = 0$ as well as $\tilde u_0 = \mathbf 1_{(-\infty,+\infty)}$ and $\tilde v_0 = 0$. Observe that if $a > 1$, we have
$\| (u_0 - \tilde u_0, v_0 - \tilde v_0)\|_X  = e^{-a}.$
Moreover, since $\mu < \mu^-$, by Theorem \ref{theoremmu} above, there exists $T > 0$ such that 
$\mu \tilde u(T,x), \tilde v(T,x,y) \geq 1-\delta/2,
$ 
with $\delta$ as in Theorem \ref{propacompact}.
By choosing $a > \max \{1, -\ln(\delta/(2C_{T,M}))\} =: a_1$ (which does not depend on $D$), and applying Lemma \ref{lemweight} on $[0,T]\times[-M,M]$ (with $M$ as in Theorem \ref{propacompact}), one has 
$$ |\mu u(T,x) - \mu \tilde u(T,x)| + |v(T,x,y) - \tilde v(T,x,y)| \leq \delta/2 \text{ for all } -M<x<M.$$ 
As a consequence, 
$ \mu u(T,x), v(T,x,y) \geq 1-\delta$ for all $-M<x<M$.
\end{proof}
\vspace{5pt}
\begin{proof}[Proof of Lemma \ref{lemweight}]

The proof relies only on the parabolic maximum principle applied to a weighted equation.
Let $\rho(x)$ define a positive $\mathcal C^2$ function such that $\rho(x) = e^{-|x|}$ for $|x|\geq 1$. Let $(u,v)$ solve system \eqref{readi}. Observe that 
$(\mathfrak u, \mathfrak v):= (\rho u, \rho v)$
satisfies
\begin{equation}
\label{weightedreadi}
  \begin{tikzpicture}
  \draw (-6,0) -- (6,0) node[pos=0.5,below] {\small{$d\partial_y \mathfrak v = \mu \mathfrak u - \mathfrak v $}} node[pos=0.5,above] {$\partial_t \mathfrak u + 2\frac{\rho'}{\rho}\partial_x \mathfrak u - \partial_{xx}^2 u  = \mathfrak v - \left(\mu + \frac{\rho''}{\rho} - 2\left( \frac{\rho'}{\rho} \right)^2 \right) \mathfrak u$};

  \node at (0,-1.25) {$\partial_t \mathfrak v + 2\frac{d\rho'}{D\rho}\partial_x \mathfrak v - \frac{d}{D} \partial_{xx}^2 \mathfrak v - d\partial_{yy}^2 \mathfrak v  = \rho f\left(\frac{\mathfrak v}{\rho}\right) - \frac{d}{D} \left( \frac{\rho''}{\rho} - 2\left( \frac{\rho'}{\rho} \right)^2 \right) \mathfrak v$};

  \draw (-6,-2.5) -- (6,-2.5) node[pos=0.5,above] {\small{$\partial_y \mathfrak v= 0$}};


  \end{tikzpicture}
\end{equation}
Equation \eqref{weightedreadi} is a semilinear parabolic system, and thanks to the definition of $\rho$, has bounded coefficients. Moreover, the non-linearity $g(\mathfrak v):= \rho f\left(\frac{\mathfrak v}{\rho}\right)$ is Lipschitz with Lipschitz constant $\text{Lip} f$. Let

$$ C:= \text{Lip} f - \underset{\mathbb R}{\inf}\left( \frac{\rho''}{\rho} - 2\left( \frac{\rho'}{\rho} \right)^2 \right) > \text{Lip} f - \frac{d}{D} \underset{\mathbb R}{\inf}\left( \frac{\rho''}{\rho} - 2\left( \frac{\rho'}{\rho} \right)^2 \right)$$

Now define similarly $(\tilde{\mathfrak u},  \tilde{\mathfrak v}) = (\rho \tilde u, \rho \tilde v)$ and let $\mathfrak U:= e^{-Ct}\left(\mathfrak u - \tilde{\mathfrak u}\right), \mathfrak V:= e^{-Ct}\left(\mathfrak v - \tilde{\mathfrak v}\right)$. Observe that $(\mathfrak U, \mathfrak V)$ satisfies
\begin{equation}
\label{weightedreadidifference}
  \begin{tikzpicture}
  \draw (-7,0) -- (7,0) node[pos=0.5,below] {\small{$d\partial_y \mathfrak V = \mu \mathfrak U - \mathfrak V $}} node[pos=0.5,above] {$\partial_t \mathfrak U + 2\frac{\rho'}{\rho}\partial_x \mathfrak U - \partial_{xx}^2 \mathfrak U  = \mathfrak V -\mu \mathfrak U -  \left(\frac{\rho''}{\rho} - 2\left( \frac{\rho'}{\rho} \right)^2 + C \right) \mathfrak U$};

  \node at (0,-1.25) {$\partial_t \mathfrak V + 2\frac{d\rho'}{D\rho}\partial_x \mathfrak V - \frac{d}{D} \partial_{xx}^2 \mathfrak V - d\partial_{yy}^2 \mathfrak V  + \left(\frac{d}{D} \left( \frac{\rho''}{\rho} - 2\left( \frac{\rho'}{\rho} \right)^2 \right) + \frac{g(\mathfrak v)-g(\tilde{\mathfrak v})}{\mathfrak v - \tilde{\mathfrak v}} + C\right) \mathfrak V = 0$};

  \draw (-7,-3) -- (7,-3) node[pos=0.5,above] {\small{$\partial_y \mathfrak V= 0$}};


  \end{tikzpicture}
\end{equation}
By choice of $C$, the $0$-order terms in parentheses in equation \eqref{weightedreadidifference} are positive, thus equation \eqref{weightedreadidifference} enjoys the maximum principle and the maximum and minimum values of $(\mu \mathfrak U, \mathfrak V)$ are reached at initial time. Indeed, as usual if the maximum is reached by $\mathfrak V$, then either it is reached at initial time or it has to be reached on $y=0$ but there Hopf's lemma gives the contradiction $\mu \mathfrak U > \mathfrak V$. It $\mathfrak U$ reaches it, a contradiction is obtained at this point by seeing that the left-hand side in the equation satisfied by $\mathfrak U$ is non-negative: thus $\mathfrak V > \mu \mathfrak U$. In the end we have for all $ 0 \leq t < T$:
$$
\max(| (\mathfrak u - \tilde{\mathfrak u})(t) |_{L^\infty(\mathbb R)},| (\mathfrak v - \tilde{\mathfrak v})(t) |_{L^\infty(\Omega_L)})\leq e^{CT} \max\left( |(\mathfrak u - \tilde{\mathfrak u})(0) |_{L^\infty(\mathbb R)}, |(\mathfrak v - \tilde{\mathfrak v})(0) |_{L^\infty(\Omega_L)} \right),
$$
%
%
%
%
i.e. for all $t < T$, $x\in \mathbb R$, $y\in [-L,0]$:
$$ \rho(x) \left( |u-\tilde u|(t,x) + |v-\tilde v|(t,x,y) \right) \leq 2 e^{CT}  \| (u_0 - \tilde u_0, v_0 - \tilde v_0)\|_X  $$
and Lemma \ref{lemweight} follows by taking $C_{T,M} = 2e^{CT} \sup_{x\in (-M,M)} \frac{1}{\rho(x)}$, which is $2 e^{CT}e^M$ when $M$ is large. Observe that $C_{T,M}$ depends only on $T$, $M$ and $\text{Lip} f$. \end{proof}

\section*{Acknowledgement}
The research leading to these results has received funding from the European Research Council under the European Union's Seventh Framework Programme (FP/2007-2013) / ERC Grant Agreement n.321186 - ReaDi -Reaction-Diffusion Equations, Propagation and Modelling. Both authors were also partially funded by the ANR project NONLOCAL ANR-14-CE25-0013.

\eject
\section*{Appendix:  the heat kernel} 

We compute here the large time asymptotics of the solution $(u(t,x),v(t,x,y))$ to
\beq
\label{eA.1}
\begin{array}{rll}
u_t-u_{xx}+\mu u-v(t,x,0)=&0\  \  &(t>0,x\in\RR)\\
v_t-\e^2v_{xx}-v_{yy}=&0\  \  &(t>0,x\in\RR,y\in(-L,0))
\ \\
v_y(t,x,0)=&\mu u(t,x)-v(t,x,0)\   \ &(t>0,x\in\RR)
\end{array}
\eeq
with initial datum
\beq
\label{eA.2}
(u(0,x),v(0,x,y))=(u_0(x),0).
\eeq
Notice that we have, without loss of generality, set $d=1$. We limit ourselves to $u(t,x)$, as this is 
the quantity that will be useful to us. the method that we use is quite standard, a computationally much more involved case being treated in \cite{AnneCha} where the diffusion on the road is represented by the fractional Laplacian.

\noindent{\bf Proposition A.1.} {\it Set $a=\di\frac{1+\mu\e^2}{1+\mu}$. There are two constants $C>0$ and $\omega>0$, and a function $\delta(t)$ tending to 0 as $t\to+\infty$ such that we have,
for $t\geq 1$
$$
\biggl\vert u(t,x)-(1+\delta(t))\int_{\RR}\frac{e^{-(x-x')^2/4at}}{\sqrt{2\pi at}}u_0(x')dx'\biggl\vert\leq Ce^{-\omega t}\Vert u_0\Vert_{L^1(\RR)}.
$$}
\noindent{\sc Proof.} Let $(\hat u(t,\xi),\hat v(t,\xi,y))$ be the Fourier transform in $x$ of $(u,v)$; we have
\beq
\label{eA.3}
\begin{array}{rll}
\hat u_t+\xi^2 \hat u+\mu \hat u-\hat v(t,x,0)=&0\  \  &(t>0,\xi\in\RR)\\
\hat v_t-\e^2\hat v-\hat v_{yy}=&0\  \  &(t>0,\xi\in\RR,y\in(-L,0))
\ \\
\hat v_y(t,\xi,0)=&\mu u(t,\xi)-v(t,\xi,0)\   \ &(t>0,\xi\in\RR)
\end{array}
\eeq
with initial datum
$(\hat  u(0,\xi),\hat v(0,\xi,y))=(\hat u_0(x),0).$

\noindent{\bf 1. The case of large $\vert \xi\vert$.} Let $A(\xi)$ be the operator acting on 
$C((-L,0))$ with domain the set of all functions $w(y)\in C^2((-L,0))$ such that 
$w_y(-L)=w_y(0)+w(0)=0,
$
and defined by $A(\xi)w=-w_{yy}+\e^2\xi^2w$. Its first eigenvalue is (as is given by a simple computation)
$\lambda_0+\e^2\xi^2$, with $\lambda_0$ being the first positive root of 
$$
\sqrt{\lambda L}{\mathrm{tan}}\sqrt{\lambda L}=1;
$$
an eigenfunction is $\cos(\sqrt{\lambda_0L}(y+L)$, that we may bound from below by a real number $\delta_0$.
Thus the solution $w(t,\xi,y)$ of
$$
w_t-w_{yy}+\e^2\xi^2w=0,\ (t>0,-L<y<0),\   \    \  w_y(t,\xi,-L)=w_y(t,\xi,0)+w(t,\xi,0)=0,
$$
with initial datum $w_0(\xi,y)$ satisfies 
\beq
\label{eA.4}
\vert w(t,\xi,y)\vert\leq \delta_0^{-1}\Vert w_0(\xi,.)\Vert_{L^\infty((-L,0))}.
\eeq
Now, multiply the equation for $\hat u$ by $\hat u/\vert\hat u\vert$, this yields
$$
\partial_t\vert\hat u\vert+(\xi^2+\mu)\vert\hat u\vert\leq\mu\vert\hat v(t,\xi,0)\vert.
$$
From \eqref{eA.4} and the Duhamel formula we obtain
$$
\partial_t\vert\hat u\vert+(\xi^2+\mu)\vert\hat u\vert\leq\mu\delta_0^{-1}\int_0^te^{-\lambda_0(t-s)}\vert\hat u(s,\xi)\vert ds.
$$
Choose any $\omega_0\in(0,\lambda_0)$, we will prove the inequality
\beq
\label{eA.5}
\vert \hat u(t,\xi)\vert \leq \frac{C\vert \hat u_0(\xi)\vert e^{-\omega_0t}}{\xi^4-A}+\vert \hat u_0(\xi)\vert e^{-(\xi^2+\mu)t},
\eeq
for a universal $C$, a suitably chosen $A>0$ and $\vert\xi\vert\geq 10 A^{1/4}$. Set 
$U(t,\xi)=e^{\omega_0t}(\vert\hat u(t,\xi)\vert-\vert \hat u_0(\xi)\vert e^{-(\xi^2+\mu)t}),$
we have
\beq
\label{eA.6}
\partial_tU+(\xi^2+\mu-\omega_0)U\leq\mu\delta_0^{-1}\int_0^te^{-\lambda_0(t-s)}U(s,\xi)ds+\frac{\mu_0\delta^{-1}\vert \hat u_0(\xi)\vert e^{-(\lambda_0-\omega_0)t}}{\xi^2-(\lambda_0-\omega_0)},
\eeq
The function $t\mapsto U(t,\xi)$ is either decreasing or has positive maxima. In the first case, because $U(0,\xi)=0$ we have $\vert\hat u(t,\xi)\vert\leq \vert \hat u_0(\xi)\vert e^{-(\xi^2+\mu)t}$, so \eqref{eA.5} is proved. Let us assume the contrary, and let $t_0$ such that $U(t,\xi)\leq U(t_0,\xi)$ for all $t\leq t_0$. In such a case    we have,
from \eqref{eA.6}:
$$
(\xi^2+\mu-\frac{\mu\delta_0^{-1}}{\lambda_0-\omega_0})U(t_0,\xi)\leq \frac{\mu_0\delta^{-1}\vert \hat u_0(\xi)\vert e^{-(\lambda_0-\omega_0)t_0}}{\xi^2-(\lambda_0-\omega_0)}.
$$
So, \eqref{eA.5} is once again proved.

\noindent{\bf 2. The case of intermediate $\vert \xi\vert$.} We consider the range $\e_0\leq\vert\xi\vert\leq A$, $A$ being chosen so that \eqref{eA.5} holds. Let us this time consider the operator $L(\xi)$, acting on $\CC\times C((-L,0))$, its domain being all couples $(u,v)$ in $\CC\times C^2((-L,0))$ such that $w_y(-L)=0$ and $w_y(0)+w(0)=\mu u$, and its action being given by
\beq
\label{eA.7}
L(\xi)(u,w)=(\xi^2+\mu)u-w(0),-w_{ww}+\e^2\xi^2w).
\eeq
The family $L(\xi)$ is a family if sectorial operators with uniformly bounded coeffcients. Moreover, by Krein-Rutman's theorem, for $\vert \xi\vert \in\e_0,A)$, and $\e\in(0,1)$, $L(\xi)$ has a bottom eigenvalue $\lambda_0(\xi)$. Possibly, it depends on $\e$, but with a common positive lower bound depending on $\e_0$ that we call $\omega_0$ a positive  So, there is $\theta_0\in(0\pi/2)$ such that the path $\gamma=\omega_0+\RR e^{i\theta_0}$ encloses $\sigma(L(\xi))$, for all $\xi$ in the range that we consider. We have 
\beq
\label{eA.10}
e^{-tL(\xi)}(\hat u_0,\hat v_0)=\di\frac1{2i\pi}\int_\gamma e^{-\lambda t}(\lambda I-L(\xi))^{-1}(\hat u_0,0)d\lambda,
\eeq
an expression that admits a bound of the form 
\beq
\label{eA.8}
\Vert e^{-tL(\xi)}(\hat u_0(\xi),0)\Vert_{L^\infty((-L,0))}\leq Ce^{-\omega_0t}\vert\hat u_0(\xi)\vert.
\eeq

\noindent{\bf 3. The case of small $\vert \xi\vert$.} Fix $\e_0>0$ small so that the following (finite set of) considerations are true. Here we simply perform a Laplace transform of 
\eqref{eA.3}, we still call $\hat u(\lambda,\xi)$ the unknown in the new variables $(\lambda,\xi)$ this leads (after some standard algebra) to the 
system
\beq
\label{eA.9}
\biggl(\xi^2-\lambda-\frac{\mu\sqrt{\lambda-\e^2\xi^2}{\mathrm{tan}}(L\sqrt{\lambda-\e^2\xi^2)}}{1-\sqrt{\lambda-\e^2\xi^2}{\mathrm{tan}}(L\sqrt{\lambda-\e^2\xi^2)}}\biggl)\hat u(\lambda,\xi)=\hat u_0(\xi).
\eeq
The factor before $\hat u(\lambda,\xi)$ vanishes, for $\vert\xi\vert\leq\e_0$, at a unique $\lambda(\xi)$saisfying:
\beq
\label{eA.12}
\lambda(\xi)\sim\frac{(1+\mu\e^2)\xi^2}{1+\mu},
\eeq
 the next zeroes being further away, uniformly in $\xi$, by the principle of isolated zeroes. Then, \eqref{eA.10} remains valid and a standard inverse Laplace transform computation yields the existence of a function $\delta(t)=o_{t\to+\infty}(1)$ such that
\beq
\label{eA.11}
\hat u(t,\xi)-(1+\delta(t))e^{-t\lambda(\xi)}\hat u_0=O(e^{-\omega_0t})\vert \hat u_0\vert.
\eeq
The proposition is proved by taking the inverse Fourier transform of $\hat u$, and putting together estimates \eqref{eA.5}, \eqref{eA.8}, \eqref{eA.12} and \eqref{eA.11}. \hfill $\bullet$
\footnotesize{
\bibliographystyle{amsplain}
\bibliography{refs}
}

\end{document}